\crefname{equation}{}{}
\crefname{lemma}{Lemma}{Lemmas}
\crefname{page}{p.}{pp.}
\numberwithin{equation}{section}
\theoremstyle{plain}
\newtheorem{theorem}{Theorem}[section]
\newtheorem{proposition}{Proposition}[section]
\newtheorem{lemma}{Lemma}[section]
\newtheorem{corollary}{Corollary}[section]
\theoremstyle{definition}
\newtheorem{example}{Example}[section]
\newtheorem{remark}{Remark}[section]
\def\now{%
\minute=\time%
\hour=\time \divide \hour by 60%
\hourMins=\hour \multiply\hourMins by 60%
\advance\minute by -\hourMins%
\zeroPadTwo{\the\hour}:\zeroPadTwo{\the\minute}%
}
\def\zeroPadTwo#1{\ifnum #1<10 0\fi#1}
\renewcommand{\cite}{\citet}
\def\^#1{\ifmmode {\mathaccent"705E #1} \else {\accent94 #1} \fi}
\def\~#1{\ifmmode {\mathaccent"707E #1} \else {\accent"7E #1} \fi}
\def\*#1{#1^\ast}
\edef\-#1{\noexpand\ifmmode {\noexpand\bar{#1}} \noexpand\else \-#1\noexpand\fi}
\def\>#1{\vec{#1}}
\def\.#1{\dot{#1}}
\def\wt#1{\widetilde{#1}}
\def\atop{\@@atop}
\def\*#1{\mathscr{#1}}
\renewcommand{\leq}{\leqslant}
\renewcommand{\geq}{\geqslant}
\newcommand{\eps}{\varepsilon}
\renewcommand{\eps}{\varepsilon}
\newcommand{\eq}{\eqref}
\newcommand{\diag}{{\mathop{\mathrm{diag}}}}
\newcommand{\IE}{\mathbbm{E}}
\newcommand{\Var}{\mathop{\mathrm{Var}}\nolimits}
\def\be#1{\begin{equation*}#1\end{equation*}}
\def\ben#1{\begin{equation}#1\end{equation}}
\def\bes#1{\begin{equation*}\begin{split}#1\end{split}\end{equation*}}
\def\besn#1{\begin{equation}\begin{split}#1\end{split}\end{equation}}
\def\bm#1{\begin{multline*}#1\end{multline*}}
\def\ba#1{\begin{align*}#1\end{align*}}
\def\ban#1{\begin{align}#1\end{align}}
\def\norm#1{\Vert#1\Vert}
\def\lnorm#1{\left\Vert#1\right\Vert}
\def\mid{\vert}
\def\beqn#1\eeqn{\begin{align}#1\end{align}}
\def\beq#1\eeq{\begin{align*}#1\end{align*}}
\def\E{{\IE}}
\newcommand{\ul}[1]{\underline{#1}}
\newcommand{\ol}[1]{\overline{#1}}
\newcommand{\mcl}[1]{\mathcal{#1}}
\newcommand{\mf}[1]{\mathfrak{#1}}
\DeclareMathOperator{\Inf}{Inf}
\renewcommand\section{\@startsection {section}{1}{\z@}%
{-3.5ex \@plus -1ex \@minus -.2ex}%
{1.3ex \@plus.2ex}%
{\center\small\sc\mathversion{bold}}}
\def\subsection#1{\@startsection {subsection}{2}{0pt}%
{-3.5ex \@plus -1ex \@minus -.2ex}%
{1ex \@plus.2ex}%
{\bf\mathversion{bold}}{#1}}
\def\subsubsection#1{\@startsection{subsubsection}{3}{0pt}%
{\medskipamount}%
{-10pt}%
{\normalsize\itshape}{\kern-2.2ex. #1.}}
\def\blfootnote{\xdef\@thefnmark{}\@footnotetext}
\begin{document}

\title{From $p$-Wasserstein Bounds to Moderate Deviations}
\author{Xiao Fang and Yuta Koike}
\date{\it The Chinese University of Hong Kong and The University of Tokyo} 
\maketitle

\noindent{\bf Abstract:} 
We use a new method via $p$-Wasserstein bounds to prove Cram\'er-type moderate deviations in (multivariate) normal approximations. 
In the classical setting that $W$ is a standardized sum of $n$ independent and identically distributed (i.i.d.) random variables with sub-exponential tails, our method recovers the optimal range of $0\leq x=o(n^{1/6})$ and the near optimal error rate $O(1)(1+x)(\log n+x^2)/\sqrt{n}$ for $P(W>x)/(1-\Phi(x))\to 1$, where $\Phi$ is the standard normal distribution function. 
Our method also works for dependent random variables (vectors) and we give applications to the combinatorial central limit theorem, Wiener chaos, homogeneous sums and local dependence. 
The key step of our method is to show that the $p$-Wasserstein distance between the distribution of the random variable (vector) of interest and a normal distribution grows like $O(p^\alpha \Delta)$, $1\leq p\leq p_0$, for some constants $\alpha, \Delta$ and $p_0$. 
In the above i.i.d.\ setting, $\alpha=1, \Delta=1/\sqrt{n}, p_0=n^{1/3}$. 
For this purpose, we obtain general $p$-Wasserstein bounds in (multivariate) normal approximations using Stein's method.

\medskip

\noindent{\bf AMS 2020 subject classification: }  60F05, 60F10, 62E17

\noindent{\bf Keywords and phrases:}  Central limit theorem, Cram\'er-type moderate deviations, multivariate normal approximation, $p$-Wasserstein distance, Stein's method

{
  \hypersetup{linkcolor=black}
  \tableofcontents
}

\section{Introduction}

Moderate deviations date back to
\cite{Cr38} who obtained expansions for tail probabilities for sums of independent
 random variables about the normal distribution. For independent and identically distributed (i.i.d.) random
 variables $X_1, \cdots, X_n$ with $\E X_1=0$ and $\Var(X_1)= 1$ such that $\E e^{|X_1|/b}\leq C < \infty$ for some $b>0$, it follows from \cite[Ch.8, Eq.(2.41)]{Pe75} that
 \ben{\label{eq:cramer}
 \left| \frac{P(W>x)}{P(Z>x)}-1\right|=O(1) (1+x^3)/\sqrt{n}
}
 for $0\leq x\leq O(1) n^{1/6}$, where $W=(X_1+\cdots +X_n)/ \sqrt{n}$, $Z\sim N(0,1)$ and $O(1)$ is bounded by a constant that depends on $b$ and $C$. 
 The range $0\leq x\leq O(1) n^{1/6}$ and the order of the error term $O(1) (1+x^3)/\sqrt{n}$ are optimal. \cite{Vo67} obtained a multi-dimensional generalization of the result of \cite{Cr38} for sums of independent random vectors.

The classical proof of \eq{eq:cramer} depends on the conjugate method, which relies heavily on the independence assumption. A related method is by controlling the cumulants of the random vector of interest. See \cite{SaSt91}. In dimension one, \cite{ChFaSh13} developed Stein's method (\cite{St72}) to obtain Cram\'er-type moderate deviation results for dependent random variables. They needed a boundedness condition, which corresponds to assuming $|X_i|\leq b$ for an absolute constant $b$ in the above i.i.d.\ setting. Recently, \cite{LZ21} relaxed the boundedness condition and obtained results for sums of locally dependent random variables and for the combinatorial central limit theorem (CLT).

In this paper, we use a new method via $p$-Wasserstein bounds to prove Cram\'er-type moderate deviations. For two probability measures $\mu$ and $\nu$ on $\mathbb{R}^d$, $d\geq 1$, their $p$-Wasserstein distance, $p\geq 1$, is defined by
\ben{\label{eq:defpwass}
\mcl{W}_p(\mu, \nu)=\left(\inf_\pi\int_{\mathbb{R}^d\times \mathbb{R}^d} |x-y|^p \pi(dx, dy) \right)^{1/p},
}
where $|\cdot|$ denotes the Euclidean norm and $\pi$ is a measure on $\mathbb{R}^d\times \mathbb{R}^d$ with marginals $\mu$ and $\nu$.
For two random vectors $X, Y\in \mathbb{R}^d$, we also write $\mcl{W}_p(X, Y)=\mcl{W}_p(\mcl{L}(X), \mcl{L}(Y))$. The key idea of our method, explained in more detail in \cref{sec:approach}, is that for a random variable $W$ of interest and a standard normal variable $Z$, if we can show
\ben{\label{eq:pwass}
\mcl{W}_p(W, Z)\leq \frac{Cp}{\sqrt{n}}
}
for all $1\leq p\leq n^{1/3}$ and an absolute constant $C$, then, by a smoothing argument, we can recover the optimal range $0\leq x= o(n^{1/6})$ for the relative error $|P(W>x)/P(Z>x)-1|$ to vanish and obtain nearly optimal error rate $O(1)(1+x)(1+\log n+x^2)/\sqrt{n}$ subject to the logarithmic term (cf. \eq{eq:cramer}). 
This method enables us to prove moderate deviation results for dependent random variables as long as we can prove results similar to \eq{eq:pwass} and we give applications to the combinatorial CLT, Wiener chaos, and homogeneous sums in \cref{sec:application}.
The method also works for multi-dimensional approximations (cf. \cref{sec:multi,sec:local}). 

It is well known that classical Cram\'er-type moderate deviation results can be used to prove strong approximation results. See, for example, \cite[Eq.(2.6)]{KMT75} and the survey by \cite{MaZh12}. As far as we know, this is the first time that the reverse direction is explored.
It is made possible by recent advances in $p$-Wasserstein bounds.
In particular, we adapt the approach (cf. \cref{sec:proof-p-wass}) of \cite{Bo20} to obtain $p$-Wasserstein bounds for general dependent random vectors.
See \cref{t1,t3} for the results via (generalized) exchangeable pairs and \cref{thm:local3} for local dependence.

Here, we introduce some of the notations to be used in the statement of results. More notations will be introduced when they are needed in the proofs. 
$|\cdot|$ denotes the Euclidean norm, $\norm{\cdot}_{H.S.}$ denotes the Hilbert-Schmidt norm and $\norm{\cdot}_{op}$ denotes the operator norm. 
$\otimes$ denotes the tensor product.
For a random vector $X$ and $p>0$, we set $\|X\|_p:=(\E|X|^p)^{1/p}$. 
For a random matrix $Y$ and $p>0$, we set $\norm{Y}_p:=(\E\norm{Y}_{H.S.}^p)^{1/p}$.
For the function $\psi_\alpha: [0,\infty]\to [0,\infty)$, $\alpha>0$, defined as 
\be{
\psi_\alpha(x):=\exp(x^\alpha)-1,
}
the Orlicz (quasi-)norm of a random vector $X$ is defined as
\ben{\label{def:orlicz}
\norm{X}_{\psi_\alpha}:=\inf\{t>0: \E \psi_\alpha(|X|/t)\leq 1\}.
}
Unless otherwise stated, we use $c$ and $C$ to denote positive absolute constants, which may differ in different expressions.
For a positive integer $q$, we set $[q]:=\{1,\dots,q\}$. 
For a finite set $S$, we denote by $|S|$ the cardinality of $S$.

\section{Our approach}\label{sec:approach}

\subsection{$p$-Wasserstein bounds}

The first step in our approach is proving a $p$-Wasserstein bound between the distribution of the random vector of interest and a normal distribution. 
We obtain the following $p$-Wasserstein bound using exchangeable pairs.

\begin{theorem}\label{t1}
Let $(W, W')$ be an exchangeable pair of $d$-dimensional random vectors satisfying the approximate linearity condition
\ben{\label{1}
\E[W'-W|\mathcal{G}]=-\Lambda (W+R)
}
for some invertible $d\times d$ matrix $\Lambda$, $d$-dimensional random vector $R$ and $\sigma$-algebra $\mathcal{G}$ containing $\sigma(W)$. 
Assume $\Lambda=\lambda I_d$ for some $\lambda>0$ (see \cref{t3} for a more general case).
Assume that $\E|W|^p<\infty$ for some $p\geq1$ and $\E |D|^4<\infty$, where $D=W'-W$. 
Then we have 
\ban{
\mathcal{W}_p(W,Z)
&\leq C\int_0^\infty e^{-t}\left( \|R_t\|_p
+\frac{\|E\|_p}{\eta_t(p)}
+\min\left\{\frac{\sqrt {d}}{\eta_t(p)}, \frac{\|\E[D^{\otimes 2}|D|^21_{\{|D|\leq\eta_t(p)\}}\mid \mcl{G}]\|_p}{\lambda \eta^{3}_t(p)}\right\} \right)dt\label{abst-est}
\\
&\leq C\left( \int_0^\infty e^{-t}\|R_t\|_pdt
+\sqrt{p}\|E\|_p
+p d^{1/4}\sqrt{\frac{\|\E[D^{\otimes2}|D|^2\mid\mcl{G}]\|_p}{\lambda}} \right),
\label{027}
}
where $Z\sim N(0,I_d)$ is a $d$-dimensional standard Gaussian vector, 
$\eta_t(p):=\sqrt{(e^{2t}-1)/p}$,
\ben{\label{def:rt}
R_t:=R+\E[\Lambda^{-1}D1_{\{|D|>\eta_t(p)\}}\mid\mcl{G}],\qquad
E:=\frac{1}{2}\E[\Lambda^{-1}D\otimes D\mid\mathcal{G}]-I_d,
}
and $C$ is an absolute constant.

\end{theorem}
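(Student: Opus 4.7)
The plan is to adapt the OU-semigroup approach of \cite{Bo20} to the exchangeable pair setting, combining the standard Stein identity for exchangeable pairs with a scale-dependent truncation of $D=W'-W$ that supplies precisely the control needed for the $p$-Wasserstein norm.

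I would first solve the Stein equation $Lg=h-\E h(Z)$ for a sufficiently smooth test function $h:\mathbb{R}^d\to\mathbb{R}$, where $Lg(x)=\Delta g(x)-x\cdot\nabla g(x)$ is the generator of the OU semigroup $(P_t)_{t\geq 0}$, by setting $g=-\int_0^\infty (P_th-\E h(Z))\,dt$. Exchangeability of $(W,W')$ yields $\E[g(W')-g(W)]=0$; expanding $g(W')-g(W)$ to second order in $D$ and substituting the approximate-linearity condition \eqref{1} with $\Lambda=\lambda I_d$ gives the Stein identity
\ba{
\E h(W)-\E h(Z) = \E[\nabla^2 g(W):E] - \E[\nabla g(W)\cdot R] + \mathrm{Rem},
}
where $\mathrm{Rem}$ is a third-order Taylor remainder controlled by $\E[|D|^3\|\nabla^3 g\|]$.

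Next, for each $t\geq 0$ in the representation $g=-\int_0^\infty P_th\,dt$, I would split $D=D\mathbf{1}_{\{|D|\leq\eta_t(p)\}}+D\mathbf{1}_{\{|D|>\eta_t(p)\}}$. The large-jump contribution of $\E[\Lambda^{-1}D\mid\mathcal{G}]$ combines with $R$ to form the modified $R_t$ in \eqref{def:rt}, while the small-jump part is expanded by Taylor using the OU smoothing estimate $\|\nabla^k P_t h\|_\infty\lesssim e^{-t}(1-e^{-2t})^{-(k-1)/2}$, valid for Lipschitz $h$. The three resulting terms become the three summands inside the integrand of \eqref{abst-est}: $\|R_t\|_p$ from the large jumps, $\|E\|_p/\eta_t(p)$ from the quadratic Stein error, and the $|D|^4$ fourth-moment expression from the small-jump Taylor remainder; the $\min$ structure reflects the crude alternative bound $\|\E[D^{\otimes 2}\mid\mathcal{G}]\|_p/\lambda\lesssim d$, giving the $\sqrt{d}/\eta_t(p)$ option.

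The genuinely new ingredient, imported from \cite{Bo20}, is the promotion of this sup-norm Stein argument, which classically yields only $\mathcal{W}_1$, to a true $\mathcal{W}_p$ bound. This is done by replacing uniform derivative bounds by $L^q$-norms against $\mathcal{L}(W)$ (with $1/p+1/q=1$) and invoking a Kantorovich-type duality; pairing the OU smoothing scale $\sqrt{1-e^{-2t}}$ against the $p$-Wasserstein tail scale $1/\sqrt{p}$ pins down the particular choice $\eta_t(p)=\sqrt{(e^{2t}-1)/p}$. Assembling all contributions and integrating $\int_0^\infty e^{-t}(\cdot)\,dt$, together with $\int_0^\infty e^{-t}/\eta_t(p)\,dt\leq C\sqrt{p}$ and analogous moment estimates, then yields \eqref{027} from \eqref{abst-est}. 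The main obstacle is precisely this last step: carrying out Bobkov's reduction from Stein identities to $\mathcal{W}_p$ in a way that accommodates both the exchangeable pair structure (rather than independent sums) and the $\mathcal{G}$-conditional truncation of $D$, compatibly at every scale $t$, is the delicate heart of the proof; the remainder is a careful if technical combination of the exchangeable pair Stein identity, OU semigroup smoothing, and $L^p$ moment inequalities.
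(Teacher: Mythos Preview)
Your sketch captures the truncation idea and the shape of the final bound, but the route you describe is not the one the paper takes, and your proposed mechanism for reaching $\mathcal{W}_p$ has a genuine gap. The paper does \emph{not} work with the Stein equation $Lg=h-\E h(Z)$ or with test functions. Instead it uses the score $\rho_t(F_t)=\nabla\log f_t(F_t)$ of $F_t=e^{-t}W+\sqrt{1-e^{-2t}}Z$ together with the Otto--Villani / Ledoux--Nourdin--Peccati inequality $\mathcal{W}_p(W,Z)\leq\int_0^\infty\|\rho_t(F_t)\|_p\,dt$ (see \eqref{ov-est}); this integral representation is what delivers $\mathcal{W}_p$ directly, for every $p\geq1$, with no duality argument. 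Your suggestion of ``replacing uniform derivative bounds by $L^q$-norms and invoking a Kantorovich-type duality'' is not an established route for $p>1$: the dual potentials for $\mathcal{W}_p$ are $c$-concave, not Lipschitz, and there is no linear Stein equation one can solve for them. (Minor: you write ``Bobkov's reduction''; the work is Bonis.)

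Given the score bound, the paper writes $\rho_t(F_t)=e^{-t}\E[W-Z/\sqrt{e^{2t}-1}\mid F_t]$ and \emph{adds} a mean-zero corrector $\tau_t$ built from the truncated exchangeable pair $(W,W_t)$ via an \emph{infinite} Hermite expansion in $Z$; see \eqref{def:tau} and \cref{lem:tau}. A second-order Taylor expansion of $g(W')-g(W)$, as you propose, leaves a cubic remainder and yields at best the $\sqrt d/\eta_t(p)$ branch of the $\min$ in \eqref{abst-est}. The sharper $|D|^4/\eta_t^3(p)$ branch, which is what makes \eqref{027} nontrivial, comes from applying exchangeability a \emph{second} time to the $k=2$ term of the infinite expansion (the symmetry trick \eqref{043}), converting it into a sum over $k\geq3$. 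Finally, the explicit powers of $p$ arise not from duality but from Gaussian hypercontractivity (\cref{hyper}) applied to the polynomials $\langle\nabla^k\phi(Z),\cdot\rangle/\phi(Z)$ conditionally on $\mathcal G$; this is what pins down $\eta_t(p)=\sqrt{(e^{2t}-1)/p}$ as the correct scale.
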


We defer the proof of \cref{t1} to \cref{sec:proof-p-wass}. The proof heavily relies on the techniques developed in \cite{Bo20}. However, the concrete error bound
and the explicit dependence on $p$ that yields optimal moderate deviation results are new.
Such $p$-Wasserstein bounds can also be obtained under other dependency structures, e.g.,  generalized exchangeable pairs (cf. \cref{t3}) and local dependence (cf. \cref{thm:local3}). 

Next, we give a corollary of \cref{t1} in dimension one.

\begin{corollary}[The case $d=1$]\label{c1}
Under the setting of \cref{t1}, assume $d=1$.
We have
\ben{\label{020}
\mcl{W}_p(W,Z)\leq C \left(\norm{R}_p +\sqrt{p} \norm{E}_p+ p \sqrt{\lambda^{-1}\norm{\E[D^4|\mcl{G}]}_p} \right).
}
\end{corollary}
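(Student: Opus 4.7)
The plan is to specialize \eqref{027} to $d=1$ and then simplify the $R_t$ integral. Since $d^{1/4}=1$ and $\E[D^{\otimes 2}|D|^2\mid\mcl{G}] = \E[D^4\mid\mcl{G}]$, the last two summands on the right-hand side of \eqref{027} immediately produce the terms $C\sqrt{p}\,\|E\|_p$ and $C p\sqrt{\lambda^{-1}\|\E[D^4\mid\mcl{G}]\|_p}$ in \eqref{020}. All that remains is to show
\begin{equation*}
\int_0^\infty e^{-t}\,\|R_t\|_p\,dt \;\leq\; C\Bigl(\|R\|_p + \sqrt{p}\,\|E\|_p + p\sqrt{\lambda^{-1}\|\E[D^4\mid\mcl{G}]\|_p}\Bigr).
\end{equation*}
By the triangle inequality $\|R_t\|_p\leq\|R\|_p + \|\lambda^{-1}\E[D\,1_{\{|D|>\eta_t(p)\}}\mid\mcl{G}]\|_p$, and $\int_0^\infty e^{-t}dt=1$ peels off $\|R\|_p$, so the task reduces to bounding the tail integral.

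The key is to combine two pointwise estimates on the truncated mean. The inequality $|D|1_{\{|D|>\eta\}}\leq D^4/\eta^3$ gives $\lambda^{-1}|\E[D\,1_{\{|D|>\eta\}}\mid\mcl{G}]|\leq \lambda^{-1}\E[D^4\mid\mcl{G}]/\eta^3$; the inequality $|D|1_{\{|D|>\eta\}}\leq D^2/\eta$, together with the identity $\lambda^{-1}\E[D^2\mid\mcl{G}]=2(1+E)$ implicit in the definition of $E$, gives $\lambda^{-1}|\E[D\,1_{\{|D|>\eta\}}\mid\mcl{G}]|\leq 2(1+|E|)/\eta$. Taking the pointwise minimum is the natural move, but if one does this carelessly one arrives at a mixed $L^p$ bound of the form $p\sqrt{(1+\|E\|_p)\lambda^{-1}\|\E[D^4\mid\mcl{G}]\|_p}$, which does not obviously split into the two terms of \eqref{020}. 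The decisive trick is the elementary inequality
\begin{equation*}
\min(A,B+C)\;\leq\;\min(A,B)+C\qquad (C\geq 0),
\end{equation*}
applied with $C=2|E|/\eta$. This yields pointwise
\begin{equation*}
\lambda^{-1}\,\bigl|\E[D\,1_{\{|D|>\eta\}}\mid\mcl{G}]\bigr| \;\leq\; \min\!\Bigl(\lambda^{-1}\E[D^4\mid\mcl{G}]/\eta^3,\; 2/\eta\Bigr) + 2|E|/\eta,
\end{equation*}
cleanly decoupling the fourth-moment contribution from the $|E|$ contribution.

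Taking $L^p$ norms (using $\|\min(X,c)\|_p\leq \min(\|X\|_p,c)$ for a deterministic $c$) and then integrating against $e^{-t}dt$, the second summand contributes $\int_0^\infty e^{-t}\cdot 2\|E\|_p/\eta_t(p)\,dt = C\sqrt{p}\,\|E\|_p$, exactly mirroring the $\|E\|_p/\eta_t(p)$ term in \eqref{abst-est}. The first summand is of exactly the same form as the third term in \eqref{abst-est} (with $\sqrt{d}=1$ replaced by the constant $2$), so the same change of variables $\eta=\eta_t(p)$ and optimal split at the cutoff where the two bounds cross yields $Cp\sqrt{\lambda^{-1}\|\E[D^4\mid\mcl{G}]\|_p}$, by a verbatim repetition of the computation used to pass from \eqref{abst-est} to \eqref{027}. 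Collecting everything gives \eqref{020}.

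The step I expect to be the main obstacle is recognizing the additive decomposition of the $\min$ in the middle paragraph: without it, one is naturally led to $\sqrt{AB}$-type bounds (either via Cauchy--Schwarz or via the balanced cutoff) which generate the problematic mixed term $p\sqrt{\|E\|_p \cdot \lambda^{-1}\|\E[D^4\mid\mcl{G}]\|_p}$ that cannot be absorbed into $\sqrt{p}\,\|E\|_p + p\sqrt{\lambda^{-1}\|\E[D^4\mid\mcl{G}]\|_p}$ without an auxiliary assumption such as $p\,\lambda^{-1}\|\E[D^4\mid\mcl{G}]\|_p\leq 1$. The additive split circumvents this entirely and makes the reduction to the already-proved bound \eqref{027} routine.
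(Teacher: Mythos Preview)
Your proof is correct and follows essentially the same approach as the paper. Both arguments start from \eqref{027}, reduce to bounding the tail integral $\int_0^\infty e^{-t}\|\lambda^{-1}\E[D1_{\{|D|>\eta_t(p)\}}\mid\mcl G]\|_p\,dt$, use the two pointwise estimates $|D|1_{\{|D|>\eta\}}\leq D^2/\eta$ and $|D|1_{\{|D|>\eta\}}\leq D^4/\eta^3$, split off the $\|E\|_p$ contribution via $\lambda^{-1}\E[D^2\mid\mcl G]=2(E+1)$, and then balance the remaining constant part against the $D^4$ part by the same $\eps$-optimization as in the passage from \eqref{abst-est} to \eqref{027}. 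The only difference is organizational: the paper first splits the $t$-integral at $\eps$ and applies the $D^2/\eta$ bound on $(0,\eps)$ and the $D^4/\eta^3$ bound on $(\eps,\infty)$, whereas you take the pointwise minimum and invoke $\min(A,B+C)\leq\min(A,B)+C$ to peel off $2|E|/\eta$ before integrating; the resulting bounds are identical.
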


\begin{proof}[Proof of \cref{c1}]
The corollary is a direct consequence of \cref{t1} except that we bound the additional term from $R_t$ by
\bes{
&C\int_0^{\infty} e^{-t} \norm{\E[\lambda^{-1} D1_{\{|D|>\sqrt{(e^{2t}-1)/p}\}}|\mcl{G}]}_p dt\\
\leq& C\sqrt{p}\lambda^{-1}\norm{\E[D^2|\mcl{G}]}_p\int_0^\eps \frac{e^{-t}}{\sqrt{e^{2t}-1}}  dt+Cp^{3/2}\lambda^{-1}\norm{\E[D^4|\mcl{G}]}_p\int_\eps^\infty  \frac{e^{-t}}{(e^{2t}-1)^{3/2}}dt\\
\leq& C\sqrt{p}\norm{E}_p+ C\sqrt{p}\int_0^\eps \frac{e^{-t}}{\sqrt{e^{2t}-1}} dt+Cp^{3/2}\lambda^{-1}\norm{\E[D^4|\mcl{G}]}_p\int_\eps^\infty  \frac{e^{-t} }{(e^{2t}-1)^{3/2}}dt ,
}
which is bounded by the summation of second and third error terms in \eq{020} by choosing an appropriate $\eps$ as at the end of the proof of \cref{t1}.
\end{proof}

\subsection{From $p$-Wasserstein bounds to moderate deviations in dimension one}

The next step in our approach is proving moderate deviation results using $p$-Wasserstein bounds. The following result enables such transition in dimension one. In most of our applications of the following result, $r_0=\alpha_1=1$. See \cref{thm:multiMD} for a multi-dimensional result. 

\begin{theorem}\label{t4}
Let $W$ be a one-dimensional random variable and $Z$ a standard normal variable. Suppose that 
\be{
\mcl{W}_p(W, Z)\leq A \max_{1\leq r\leq r_0}p^{\alpha_r} \Delta_r \ \text{for}\ 1\leq p\leq  p_0
}
with some constants $\alpha_1,\dots,\alpha_{r_0}\geq 0$, $A>0$, $p_0\geq1$ and $\Delta_1,\dots,\Delta_{r_0}>0$. 
Suppose also that $\ol\Delta:=\max_{1\leq r\leq r_0}\Delta_r$ satisfies $|\log\ol\Delta|\leq p_0/2$. 
Then there exists a positive constant $C$ depending only on $\alpha_1,\dots,\alpha_{r_0}$ and $A$ such that 
\ben{\label{021}
\left|\frac{P(W>x)}{P(Z>x)}-1\right|\leq C(1+x)\left\{\max_{1\leq r\leq r_0}(|\log\ol\Delta|+x^2)^{\alpha_r}\Delta_r+\ol\Delta\right\}
}
for all $0\leq x\leq \sqrt{p_0}\wedge\min_{r=1,\dots,r_0}\Delta_r^{-1/(2\alpha_r+1)}$.
\end{theorem}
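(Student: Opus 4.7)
The plan is to lift the $p$-Wasserstein bound into an exponential tail bound for $|W-Z|$ under an optimal coupling, and then to compare $P(W>x)$ with $P(Z>x)$ through the classical sandwich
$$P(Z>x+t)-P(|W-Z|>t)\leq P(W>x)\leq P(Z>x-t)+P(|W-Z|>t).$$
For any $p\in[1,p_0]$ and $t>0$, realising $W$ and $Z$ on a common space via an optimal coupling for $\mcl{W}_p$ and applying Markov's inequality to the $p$-th moment of $|W-Z|$ gives
$$P(|W-Z|>t)\leq \left(\frac{A\max_{1\leq r\leq r_0}p^{\alpha_r}\Delta_r}{t}\right)^p.$$
The choice $t=t(p):=eA\max_r p^{\alpha_r}\Delta_r$ turns this into the clean estimate $P(|W-Z|>t)\leq e^{-p}$.

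The second ingredient is a Mills-type comparison of Gaussian tails. Using $\phi(x-s)=\phi(x)\exp(sx-s^2/2)$ on the integral $\int_{x-t}^{x+t}\phi$ together with the standard lower bound $P(Z>x)\geq c\phi(x)/(1+x)$, one obtains
$$\left|\frac{P(Z>x\pm t)}{P(Z>x)}-1\right|\leq C(1+x)t,$$
valid as soon as $xt$ is bounded above by an absolute constant (so that $\phi$ varies by at most a constant factor on $[x-t,x+t]$). Plugging both pieces into the sandwich and dividing by $P(Z>x)\geq c\phi(x)/(1+x)$ yields
$$\left|\frac{P(W>x)}{P(Z>x)}-1\right|\leq C(1+x)\bigl(t+e^{-p+x^2/2}\bigr).$$

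The decisive balancing step is to set $p:=x^2/2+|\log\ol\Delta|+c_0$ for a sufficiently large absolute constant $c_0$. The hypotheses $x\leq\sqrt{p_0}$ and $|\log\ol\Delta|\leq p_0/2$ ensure $p\leq p_0$, and this choice makes $e^{-p+x^2/2}$ proportional to $\ol\Delta$, while $t(p)$ becomes at most $C\max_r(|\log\ol\Delta|+x^2)^{\alpha_r}\Delta_r$, exactly matching the advertised bound in \eqref{021}. The main technical obstacle I expect is verifying the auxiliary constraint $xt\lesssim 1$ required for the Mills comparison: the contribution $x\cdot x^{2\alpha_r}\Delta_r$ is tamed by the hypothesis $x\leq\Delta_r^{-1/(2\alpha_r+1)}$, while the contribution $x\cdot|\log\ol\Delta|^{\alpha_r}\Delta_r$ tends to $0$ when $\ol\Delta$ is small, and the residual regime in which $\ol\Delta$ is bounded away from zero has to be handled separately, since in that regime the right-hand side of \eqref{021} already exceeds $1$ and the conclusion is trivial.
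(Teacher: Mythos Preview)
Your proposal is correct and follows essentially the same route as the paper: optimal coupling plus Markov's inequality to turn the $p$-Wasserstein bound into $P(|W-Z|>t)\leq e^{-p}$ with $t=eA\max_r p^{\alpha_r}\Delta_r$, the sandwich $P(Z>x+t)-e^{-p}\leq P(W>x)\leq P(Z>x-t)+e^{-p}$, the Mills ratio bound $\phi(x)/P(Z>x)\leq 1+x$ (the paper calls it Birnbaum's inequality), and the balancing choice $p=|\log\ol\Delta|+x^2/2$. One minor point: drop the ``$+c_0$'' in your choice of $p$, since with it the inequality $p\leq p_0$ can fail at the endpoint $x=\sqrt{p_0}$; the paper takes $c_0=0$ and instead first restricts to $\ol\Delta<1/e$ (so that $p\geq1$ automatically), handling $\ol\Delta\geq1/e$ trivially as you also indicate.
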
 

We remark that because $\mcl{W}_p(W,Z)$ increases in $p$, to apply \cref{t4}, we only need to verify the upper bound on $\mcl{W}_p(W,Z)$ for sufficiently large $p$, for example, for $p\geq 2$ in our applications.

\begin{proof}[Proof of Theorem \ref{t4}]
In this proof, we use $C$ to denote positive constant, which depends only on $\alpha_1,\dots,\alpha_{r_0}$ and $A$ and may be different in different expressions. 
First we prove the claim when $\ol\Delta<1/e$. 
Set 
\be{
p=\log(1/\ol\Delta)+\frac{x^2}{2}, \quad \eps=A \max_{1\leq r\leq r_0}p^{\alpha_r} \Delta_r e.
} 
Because $|\log \ol\Delta|\leq p_0/2$ and $x\leq \sqrt{p_0}$, we have $p\leq p_0$.

Without loss of generality, we may take $W$ and $Z$ so that $\|W-Z\|_p=\mcl{W}_p(W, Z)$. Then
\ba{
P(W>x)
&\leq P(Z>x-\eps)+P(|W-Z|>\eps)\\
&=P(Z>x)+P(x-\eps<Z\leq x)+P(|W-Z|>\eps).
}
Let $\phi(\cdot)$ denote the standard normal density function. 
Since
\ba{
P(x-\eps<Z\leq x)
=\int_{x-\eps}^x\phi(z)dz\leq\phi((x-\eps)\vee0)\eps
}
and
\ba{
P(|W-Z|>\eps)\leq (A \max_{1\leq r\leq r_0}p^{\alpha_r} \Delta_r/\eps)^p=e^{-p}=\ol\Delta e^{-x^2/2},
}
we obtain
\ba{
P(W>x)
&\leq P(Z>x)+\phi((x-\eps)\vee0)\eps+\ol\Delta e^{-x^2/2}.
}
Similarly, we deduce
\ba{
P(Z>x)
&=P(Z>x+\eps)+P(x<Z\leq x+\eps)\\
&\leq P(W>x)+P(|W-Z|>\eps)+P(x<Z\leq x+\eps)\\
&\leq P(W>x)+\phi(x)\eps+\ol\Delta e^{-x^2/2}.
}
Consequently, we obtain
\ben{\label{022}
|P(W>x)-P(Z>x)|\leq\phi((x-\eps)\vee0)\eps+\ol\Delta e^{-x^2/2}.
}
Observe that
\besn{\label{023}
\eps&\leq C\max_{1\leq r\leq r_0}\Delta_r(\{\log(1/\ol\Delta)\}^{\alpha_r}+x^{2\alpha_r})\\
&\leq C\max_{1\leq r\leq r_0}\Delta_r(\{\log(1/\Delta_r)\}^{\alpha_r}+\Delta_r^{-2\alpha_r/(2\alpha_r+1)})\\
&\leq C\max_{1\leq r\leq r_0}\Delta_r^{1-2\alpha_r/(2\alpha_r+1)}
=C\max_{1\leq r\leq r_0}\Delta_r^{1/(2\alpha_r+1)}.
}
If $x\geq\eps$, we have
\ba{
\phi((x-\eps)\vee0)\leq\phi(x)e^{x\eps}\leq C\phi(x).
}
Birnbaum's inequality yields
\ben{\label{024}
\frac{\phi(x)}{P(Z>x)}\leq\frac{2}{\sqrt{4+x^2}-x}=\frac{\sqrt{4+x^2}+x}{2}\leq1+x.
}
Hence
\ba{
\left|\frac{P(W>x)}{P(Z>x)}-1\right|\leq C(1+x)(\eps+\ol\Delta)
\leq C(1+x)\{\max_{1\leq r\leq r_0}(|\log\ol\Delta|+x^2)^{\alpha_r}\Delta_r+\ol\Delta\}.
}
If $x\leq\eps$, we have by \eqref{023} and \eqref{024}
\ba{
\frac{1}{P(Z>x)}\leq\sqrt{2\pi}(1+\eps)e^{\eps^2/2}\leq C.
}
Combining this with \eqref{022} gives
\ba{
\left|\frac{P(W>x)}{P(Z>x)}-1\right|
\leq C(\eps+\ol\Delta)
\leq C(1+x)\{\max_{1\leq r\leq r_0}(|\log\ol\Delta|+x^2)^{\alpha_r}\Delta_r+\ol\Delta\}.
}
So we complete the proof of \eqref{021}.

It remains to prove \eqref{021} when $\ol\Delta\geq 1/e$. In this case, we have $x\leq e$ and thus 
\[
\frac{1}{P(Z>x)}\leq (1+e)\sqrt{2\pi e^{e^2}}
\]
by \eqref{024}. Hence \eqref{021} holds with $C\geq e(1+e)\sqrt{2\pi e^{e^2}}$.
\end{proof}

\subsection{Sums of independent random variables}\label{subsec:indep}

Finally, we illustrate our approach in the classical setting of sums of independent random variables.

Let $W=\frac{1}{\sqrt{n}}\sum_{i=1}^n X_i$, where $\{X_1,\dots, X_n\}$ are independent with $\E X_i=0$ for all $i$ and $\Var(W)=1$.
Suppose 
\ben{\label{035}
b:=\max_{1\leq i\leq n} \norm{X_i}_{\psi_1},
}
where $\norm{\cdot}_{\psi_1}$ is the Orlicz norm defined in \eq{def:orlicz}. This is equivalent to $b$ being the smallest positive constant such that $\E e^{|X_i|/b}\leq 2$ for all $i$.
Let $Z\sim N(0,1)$. To apply \cref{t1}, we construct an exchangeable pair (which is standard in Stein's method) as follows.
Let $I$ be a uniform random index from $\{1,\dots, n\}$ and independent of everything else.
Let $\{X_1',\dots, X_n'\}$ be an independent copy of $\{X_1,\dots, X_n\}$.
Let 
\be{
W'=W-\frac{1}{\sqrt{n}}X_I+\frac{1}{\sqrt{n}}X_I'=:W+D.
}
Let $\mcl{G}=\sigma(X_1,\dots, X_n)$.
It is straightforward to verify that
\be{
\E(D|\mcl{G})=-\frac{W}{n}.
}
Therefore, we can apply \cref{t1} with $R=0$ and $\lambda=1/n$ to bound $\mcl{W}_p(W, Z)$.

We have
\ba{
\|R_t\|_p\leq\norm{\sum_{i=1}^nY_i1_{\{|Y_i|>\eta_t(p)\}}}_p,\qquad
\norm{E}_p\leq \norm{\sum_{i=1}^n(Y_i^2-\E Y_i^2)}_p,
}
and
\ba{
\lambda^{-1} \norm{\E[D^41_{\{|D|\leq\eta_t(p)\}}|\mcl{G}]}_p
&\leq\norm{\sum_{i=1}^nY_i^41_{\{|Y_i|\leq\eta_t(p)\}}}_p\\
&\leq\sum_{i=1}^n\E Y_i^4+\norm{\sum_{i=1}^n(Y_i^41_{\{|Y_i|\leq\eta_t(p)\}}-\E[Y_i^41_{\{|Y_i|\leq\eta_t(p)\}}])}_p,
}
where $Y_i=(X_i'-X_i)/\sqrt n$. 
We employ the following lemma to bound these quantities. See \cite[Theorem 3.1 and Remark 3.1]{KuCh20} for a related result in dimension one and the literature on such concentration inequalities for sub-Weibull distributions.
\begin{lemma}\label{lem:weibull}
Let $\xi_1,\dots,\xi_n$ be independent random vectors in $\mathbb R^d$ such that $\max_{i=1,\dots,n}\|\xi_i\|_{\psi_\alpha}\leq M$ for some $M>0$ and $\alpha\in(0,1]$. 
Then, there is a constant $C_\alpha>0$ depending only on $\alpha$ such that, for any $p\geq2$ and any real numbers $a_1,\dots,a_n$, 
\[
\left\|\sum_{i=1}^na_i(\xi_i-\E\xi_i)\right\|_p\leq C_\alpha M\left(\sqrt{p\sum_{i=1}^na_i^2}+p^{1/\alpha}\max_{1\leq i\leq n}|a_i|\right).
\]
\end{lemma}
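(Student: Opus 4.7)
The strategy is to combine a truncation argument with the Pinelis--Sakhanenko Bennett--Bernstein inequality for sums of independent centered random vectors in a Hilbert space, which is crucially dimension-free. Without loss of generality assume $\E\xi_i=0$ (at the cost of doubling $M$). I will use the standard equivalence that $\|\xi_i\|_{\psi_\alpha}\leq M$ yields the moment bound $\|\xi_i\|_q\leq C_\alpha M q^{1/\alpha}$ for every $q\geq1$, along with the sub-Weibull tail estimate $\P(\|\xi_i\|>s)\leq 2\exp(-(s/M)^\alpha)$.

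Fix $p\geq 2$ and choose a truncation level $T$ of order $Mp^{1/\alpha-1}$ (a constant when $\alpha=1$; growing in $p$ when $\alpha<1$). Decompose $\xi_i=\xi_i^{\flat}+\xi_i^{\sharp}$ with $\xi_i^{\flat}:=\xi_i\mathbf{1}\{\|\xi_i\|\leq T\}$, and split the target sum accordingly as $S^{\flat}+S^{\sharp}$. For $S^{\flat}$, each summand $a_i(\xi_i^{\flat}-\E\xi_i^{\flat})$ is centered with Hilbert-space norm at most $2|a_i|T$ a.s., and the sum of second moments is at most $C_\alpha M^2\sum_i a_i^2$. Pinelis' dimension-free Bennett--Bernstein inequality then yields
\begin{equation*}
\|S^{\flat}\|_p\leq C\left[\sqrt{p}\,M\sqrt{\textstyle\sum_i a_i^2}+pT\max_i|a_i|\right]\leq C_\alpha M\left[\sqrt{p\textstyle\sum_i a_i^2}+p^{1/\alpha}\max_i|a_i|\right]
\end{equation*}
by the choice of $T$, matching the target bound up to absolute constants.

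For $S^{\sharp}$, I route all $a_i$-dependence through the pointwise estimate $|a_i|\leq\max_j|a_j|$, giving $\|\sum_i a_i\xi_i^{\sharp}\|\leq\max_j|a_j|\cdot Y$ and $\|\sum_i a_i\E\xi_i^{\sharp}\|\leq\max_j|a_j|\cdot\E Y$ for the scalar $Y:=\sum_i\|\xi_i\|\mathbf{1}\{\|\xi_i\|>T\}$. It then suffices to show $\|Y\|_p\leq C_\alpha Mp^{1/\alpha}$. Since $Y$ is a sum of independent nonnegative sub-Weibull scalars with $\psi_\alpha$-norm at most $M$ and per-summand second moments decaying like $M^2\exp(-(T/M)^\alpha/2)$ (by Cauchy--Schwarz against $\E\|\xi_i\|^4\leq C_\alpha M^4$), this follows from the one-dimensional sub-Weibull Bernstein inequality (in the spirit of \cite[Theorem~3.1]{KuCh20}) applied to $Y-\E Y$, together with a direct estimate of $\E Y$.

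The main obstacle is this tail step: a naive triangle inequality for $S^{\sharp}$ would produce $\sum_i|a_i|$ rather than $\max_i|a_i|$ on the right-hand side, destroying the sharp coefficient dependence. Routing through the scalar $Y$ removes this issue, but requires a delicate choice of $T$ to balance the Bernstein contribution in $S^{\flat}$ against the moment and variance estimates on $Y$, so that both pieces come out at the correct order $Mp^{1/\alpha}\max_i|a_i|$ without picking up a spurious $\log n$ or dimension-dependent factor. The dimension-free nature of Pinelis' Hilbert-space inequality is precisely what allows the final constant $C_\alpha$ to depend only on $\alpha$.
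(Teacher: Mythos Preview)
Your approach has a genuine gap in the tail step. The claim $\|Y\|_p\leq C_\alpha Mp^{1/\alpha}$ is false for your choice $T\asymp Mp^{1/\alpha-1}$. Take $\alpha=1$, so $T\asymp M$ is a constant; then $P(|\xi_i|>T)$ is bounded below by a positive constant and $\E\eta_i=\E[|\xi_i|\mathbf{1}\{|\xi_i|>T\}]$ is of order $M$. Hence $\E Y\asymp nM$, which already violates $\|Y\|_p\leq C Mp$ for any fixed $p\geq2$ as $n\to\infty$. The same failure occurs for $\alpha<1$: your Cauchy--Schwarz estimate gives $\E\eta_i^2\lesssim M^2\exp(-p^{1-\alpha}/2)$, so both $\E Y$ and the variance proxy in the sub-Weibull Bernstein bound for $Y-\E Y$ carry a factor of $n\exp(-p^{1-\alpha}/2)$, which is unbounded in $n$ unless $p\gtrsim(\log n)^{1/(1-\alpha)}$. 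Raising $T$ to kill the $n$ (say $T\asymp M(\log n)^{1/\alpha}$ or $T\asymp Mp^{1/\alpha}$) then inflates the Bernstein term $pT\max_i|a_i|$ in $S^\flat$ by the same factor, so the two pieces cannot be balanced to recover the stated bound by this decomposition alone.

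The paper avoids truncation entirely: it symmetrizes to $\sum a_i\epsilon_i\xi_i$, then uses a tail-comparison inequality (Theorem 3.2.2 of \cite{KwWo92}) to dominate by $M\sum a_i\zeta_i$ where the $\zeta_i$ are i.i.d.\ \emph{scalar} symmetric Weibull variables, and finally invokes a sharp moment estimate for scalar Weibull sums (Corollary 1.2 of \cite{Bo15}). The reduction to a one-dimensional i.i.d.\ model is what sidesteps the $n$-dependence that your truncation cannot absorb.
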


\begin{proof}
First, by symmetrization, we have
\[
\left\|\sum_{i=1}^na_i(\xi_i-\E\xi_i)\right\|_p\leq2\left\|\sum_{i=1}^na_i\epsilon_i\xi_i\right\|_p,
\]
where $\epsilon_1,\dots,\epsilon_n$ are i.i.d.~Rademacher variables independent of everything else. 
Next, let $\zeta$ be a symmetric random variable such that $P(|\zeta|> t)=e^{-t^\alpha}$ for all $t\geq0$. Then we have $P(|\epsilon_i\xi_i|> t)\leq2\exp(-(t/M)^\alpha)=2P(M|\zeta|> t)$ for all $i=1,\dots,n$ and $t>0$. Thus, by Theorem 3.2.2 in \cite{KwWo92}, 
\[
P\left(\left|\sum_{i=1}^na_i\epsilon_i\xi_i\right|>t\right)\leq 48P\left(6 M\left|\sum_{i=1}^na_i\zeta_i\right|>t\right)
\]
for any $t>0$, where $\zeta_1,\dots,\zeta_n$ are independent copies of $\zeta$. This particularly implies that
\[
\left\|\sum_{i=1}^na_i\epsilon_i\xi_i\right\|_p\leq CM\left\|\sum_{i=1}^na_i\zeta_i\right\|_p.
\]
Finally, by Corollary 1.2 in \cite{Bo15},
\[
\left\|\sum_{i=1}^na_i\zeta_i\right\|_p\leq L_\alpha\left(\sqrt{p\sum_{i=1}^na_i^2}+p^{1/\alpha}\max_{i=1,\dots,n}|a_i|\right),
\]
where $L_\alpha>0$ depends only on $\alpha$. All together, we obtain the desired result. 
\end{proof}

Now, for any $r\geq1$, from $b:=\max_{1\leq i\leq n}\norm{X_i}_{\psi_1}$ and the equivalence of sub-exponential tails and linear growth of $L^r$-norms (cf. \cite[Proposition 2.7.1]{Ve18}),
\ba{
\|Y_i1_{\{|Y_i|>\eta_t(p)\}}\|_r&\leq\eta_t^{-1}(p)(\E Y_i^{2r})^{1/r}\leq Cr^2\eta_t^{-1}(p)b^2/n,&
\|Y_i^2\|_r&\leq Cr^2 b^2/n,
}
and
\[
\|Y_i^41_{\{|Y_i|\leq\eta_t(p)\}}\|_r\leq\eta_t^2(p)\|Y_i^2\|_r\leq Cr^2\eta_t^2(p) b^2/n.
\]
Hence, $\|Y_i1_{\{|Y_i|>\eta_t(p)\}}\|_{\psi_{1/2}}\leq C\eta_t^{-1}(p)b^2/n$, $\|Y_i^2\|_{\psi_{1/2}}\leq Cb^2/n$ and $\|Y_i^41_{\{|Y_i|\leq\eta_t(p)\}}\|_{\psi_{1/2}}\leq C\eta_t^2(p) b^2/n$. So we obtain by \cref{lem:weibull}, for $p\geq 2$,
\ba{
\int_0^\infty e^{-t}\|R_t\|_pdt&\leq C\frac{\sqrt{np}+p^2}{n}\int_0^\infty\frac{e^{-t}\sqrt p}{\sqrt{e^{2t}-1}}b^2dt\leq C(\frac{p}{\sqrt n}+\frac{p^{5/2}}{n})b^2,\\
\sqrt p\norm{E}_p&\leq C(\frac{p}{\sqrt n}+\frac{p^{5/2}}{n})b^2,
}
and
\ba{
&\int_0^\infty e^{-t}\min\left\{\frac{1}{\eta_t(p)},\frac{\|\E[D^41_{\{|D|\leq\eta_t(p)\}}\mid \mcl{G}]\|_p}{\lambda \eta_t^3(p)}\right\}dt\\
&\leq\int_0^\infty e^{-t}\min\left\{\frac{\sqrt{p}}{\sqrt{e^{2t}-1}},\frac{Cp^{3/2}b^4}{n(e^{2t}-1)^{3/2}}\right\}dt
+C\int_0^\infty e^{-t}\frac{p/\sqrt n+p^{5/2}/n}{\sqrt{e^{2t}-1}}b^2dt\\
&\leq C(\frac{p}{\sqrt n}+\frac{p^{5/2}}{n})b^2.
}
Here, we evaluate the integrals as in the proof of \cref{t1}. 
Consequently, from \eq{abst-est},
\ben{\label{034}
\mcl{W}_p(W,Z)\leq C(\frac{p}{\sqrt n}+\frac{p^{5/2}}{n}) b^2,    \quad \forall\ p\geq 2.
}
Note that $\Var(W)=1\leq C b^2$. Therefore, we can apply \cref{t4} with $r_0=\alpha_1=1$, $\Delta_1=b^2/\sqrt{n}$ and $p_0=(\sqrt{n}/b^2)^{2/3}$, which implies that:
\begin{corollary}\label{cor:indep}
Let $W=\frac{1}{\sqrt{n}}\sum_{i=1}^n X_i$, where $\{X_1,\dots, X_n\}$ are independent with $\E X_i=0$ for all $i$, $\Var(W)=1$ and $b:=\max_{1\leq i\leq n} \norm{X_i}_{\psi_1}$.
Then there exist positive absolute constants $c$ and $C$ such that
\be{
\left|\frac{P(W>x)}{P(Z>x)}-1   \right|\leq C \frac{(1+x)(1+|\log (n/b^4)|+x^2)b^2}{\sqrt n}
}
for all $0\leq x\leq (n/b^4)^{1/6}$ and $\frac{b^2}{\sqrt{n}}\leq c$.
\end{corollary}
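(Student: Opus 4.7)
The corollary will follow by feeding the $p$-Wasserstein bound \eqref{034} directly into \cref{t4}. My plan has three steps: first, normalize \eqref{034} into the form $\mcl{W}_p(W,Z)\le A p\Delta_1$ on an appropriate range $p\in[2,p_0]$; second, verify the hypotheses of \cref{t4}; third, specialize and simplify \eqref{021} into the form stated.

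For the normalization step, I take $r_0=\alpha_1=1$, $\Delta_1=b^2/\sqrt n$, and $p_0=(\sqrt n/b^2)^{2/3}$. The first summand in \eqref{034} is already of the form $Cp\Delta_1$. For the second summand, the choice of $p_0$ is dictated by demanding $p^{3/2}\le \sqrt n/b^2$, which gives $p^{5/2}b^2/n\le p/\sqrt n$; this is absorbed into $Cpb^2/\sqrt n$ because the normalization $\Var(W)=1$ together with $\Var(X_i)\le C\|X_i\|_{\psi_1}^2\le Cb^2$ and $\sum_i\Var(X_i)=n$ forces $b^2\ge c$ for an absolute $c>0$. By the remark following \cref{t4}, the bound for $p\in[1,2)$ is automatic from monotonicity of $\mcl{W}_p$ in $p$ after enlarging the absolute constant $A$.

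Next I verify the hypothesis $|\log\ol\Delta|\le p_0/2$ of \cref{t4}: this reads $\log(\sqrt n/b^2)\le(\sqrt n/b^2)^{2/3}/2$, which holds as soon as $\sqrt n/b^2$ is sufficiently large, and this is precisely what the assumption $b^2/\sqrt n\le c$ provides for a suitably small absolute $c$. The admissible range for $x$ in \cref{t4} is $0\le x\le\sqrt{p_0}\wedge\Delta_1^{-1/3}$; both quantities equal $(n/b^4)^{1/6}$ here, matching the statement. Finally, specializing \eqref{021} to these parameters and using $|\log\ol\Delta|=\tfrac12\log(n/b^4)$ (valid since $b^2/\sqrt n<1$) collapses the bound into $C(1+x)(1+|\log(n/b^4)|+x^2)b^2/\sqrt n$ after adjusting the absolute constant. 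I do not anticipate a genuine obstacle; the only substantive check is that the smallness hypothesis $b^2/\sqrt n\le c$ implies $|\log\ol\Delta|\le p_0/2$, and the rest is bookkeeping on absolute constants.
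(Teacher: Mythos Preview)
Your proposal is correct and follows essentially the same approach as the paper: apply \cref{t4} with $r_0=\alpha_1=1$, $\Delta_1=b^2/\sqrt n$, and $p_0=(\sqrt n/b^2)^{2/3}$, using $\Var(W)=1\leq Cb^2$ to absorb the $p^{5/2}b^2/n$ term of \eqref{034} into $Cp\Delta_1$ on the range $p\leq p_0$. Your explicit verification of the condition $|\log\ol\Delta|\leq p_0/2$ via the smallness assumption $b^2/\sqrt n\leq c$ is exactly the intended use of that hypothesis.
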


\begin{remark}\label{rem:log}
\cref{cor:indep} recovers the bound \eq{eq:cramer} when $x\geq \sqrt{\log n}$. 
It seems impossible to avoid the $\log n$ term using our approach because such a term will appear even if we only aim to bound the Kolmogorov distance using $p$-Wasserstein bounds and a smoothing argument.
\end{remark}

An inspection of the proof shows that we can replace the range of $x$ by $0\leq x\leq c_0(n/b^4)^{1/6}$ with any absolute constant $c_0$ (the constant $C$ will then depend on $c_0$). Because our primary interests are vanishing relative errors and the order of magnitude, we will not worry about such absolute constants and state our results in a form that we find convenient.

\section{Applications to Cram\'er-type moderate deviations in dimension one}\label{sec:application}

In this section, we provide more applications in dimension one, including the combinatorial CLT, Wiener chaos and homogeneous sums.

\subsection{Combinatorial CLT}

Let $\mathbb{X}=\{X_{ij}, 1\leq i,j\leq n\}$ be an $n\times n$ array of independent random variables where $n\geq 2$, $\E X_{ij}=c_{ij}$, $\Var(X_{ij})=\sigma_{ij}^2\geq 0$. Assume without loss of generality that (cf. Remark 1.3 of \cite{CF15})
\be{
c_{i\cdot}=c_{\cdot j}=0,
}
where $c_{i\cdot}=\sum_{j=1}^n c_{ij}/n$, $c_{\cdot j}=\sum_{i=1}^n c_{ij}/n$.
Let $\pi$ be a uniform random permutation of $\{1,\dots, n\}$, independent of $\mathbb{X}$, and let 
\ben{\label{007}
S=\sum_{i=1}^n X_{i\pi(i)}.
}
It is known that $\E(S)=0$ and (cf. Theorem 1.1 of \cite{CF15})
\ben{\label{004}
B_n^2:=\Var(S)=\frac{1}{n-1}\sum_{i,j=1}^n c_{ij}^2+\frac{1}{n}\sum_{i,j=1}^n \sigma_{ij}^2,
}
\ben{\label{003}
\sup_{x\in \mathbb{R}}|P(W\leq x)-P(Z\leq x)|\leq \frac{C}{n}\sum_{i,j=1}^n \E\big|\frac{X_{ij}}{B_n}\big|^3,
}
where 
\ben{\label{008}
W=\frac{S}{B_n},
}
and $Z\sim N(0,1)$.
Cram\'er-type moderate deviation results were obtained by \cite{Fr22} and \cite{LZ21}.
Here, we use our approach to prove a version of such moderate deviation results.

\begin{theorem}\label{t5}
Under the above setting, assume
\ben{\label{005}
b:=\max_{1\leq i,j\leq n}\norm{X_{ij}}_{\psi_1}<\infty.
}
Then
there exist positive absolute constants $c$ and $C$ such that, for
\be{
\Delta:=\frac{n^{1/2}b^2}{B_n^2}\leq c,\quad     0\leq x\leq  \Delta^{-1/3},
}
we have
\be{
\left|\frac{P(W>x)}{P(Z>x)}-1   \right|\leq C (1+x) (1+|\log \Delta |+x^2) \Delta.
}
\end{theorem}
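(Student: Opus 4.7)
The plan is to apply the three-step recipe of Section~2: build an exchangeable pair for $W$ by a random transposition of $\pi$, use \cref{c1} to obtain a $p$-Wasserstein bound of the form $\mcl{W}_p(W,Z)\leq Cp\Delta$ for $2\leq p\leq c\Delta^{-2/3}$, and then conclude via \cref{t4} with $r_0=\alpha_1=1$, $\Delta_1=\Delta$, and $p_0=c\Delta^{-2/3}$. Since $\sqrt{p_0}\wedge\Delta^{-1/3}=\Delta^{-1/3}$, this delivers the bound on the full stated range.

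For the pair, first I would draw $(I,J)$ uniformly from $\{(i,j)\in[n]^2:i\ne j\}$ independently of $(\pi,\mathbb{X})$ and set $\pi'=\pi\circ(I\,J)$, $S'=\sum_i X_{i\pi'(i)}$, $W'=S'/B_n$. Using the identities $\sum_{i\ne j}X_{i\pi(j)}=T-S$ and $\sum_{i\ne j}X_{i\pi(i)}=(n-1)S$ with $T:=\sum_{i,k}X_{ik}$, a direct computation gives
\[
\E[W'-W\mid\mathcal G]=-\lambda(W+R),\qquad \lambda=\frac{2}{n-1},\qquad R=-\frac{T}{nB_n},
\]
and $\E T=\sum c_{ik}=0$ by the double centering $c_{i\cdot}=c_{\cdot j}=0$. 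With $D=(S'-S)/B_n$ and $E=\frac{n-1}{4}\E[D^2\mid\mathcal G]-1$, \cref{c1} reduces the task to bounding $\|R\|_p+\sqrt p\,\|E\|_p+p\sqrt{\lambda^{-1}\|\E[D^4\mid\mathcal G]\|_p}$ by $Cp\Delta$ in the specified range of $p$.

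A preliminary observation is $B_n^2\leq Cnb^2$ (from $|c_{ij}|\leq Cb$ and $\sigma_{ij}\leq Cb$, consequences of the sub-exponential assumption), so $\Delta\geq c/\sqrt n$ and the range $p\leq c\Delta^{-2/3}$ is at most of order $n^{1/3}$. The remainder term is then quick: since $T$ is a sum of $n^2$ independent centered sub-exponentials, \cref{lem:weibull} gives $\|T\|_p\leq Cb(n\sqrt p+p)$, whence $\|R\|_p\leq Cb\sqrt p/B_n\leq Cp\Delta$ for $p\geq c$. The main work lies in $\|E\|_p$ and $\|\E[D^4\mid\mathcal G]\|_p$. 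Writing $\delta_{ij}(\pi):=X_{i\pi(j)}+X_{j\pi(i)}-X_{i\pi(i)}-X_{j\pi(j)}$, one has $\E[D^2\mid\mathcal G]=(n(n-1)B_n^2)^{-1}\sum_{i\ne j}\delta_{ij}(\pi)^2$ and analogously for $D^4$. I would condition on $\pi$ first: for fixed $\pi$, $\sum_{i\ne j}\delta_{ij}(\pi)^2$ is a quadratic form (and $\sum_{i\ne j}\delta_{ij}(\pi)^4$ a quartic form) in the independent sub-exponential variables $\{X_{ij}\}$, to which Hanson--Wright / Bernstein-type $p$-moment inequalities for polynomial chaos apply and produce concentration of order $C\sqrt p\,\Delta$ around the conditional mean (with a truncation at scale $\eta_t(p)$ for the quartic case, mirroring the i.i.d.\ treatment in \cref{subsec:indep}). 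The conditional mean is then a deterministic function of $\pi$, which I would concentrate around its expectation via a martingale / bounded-difference argument on the symmetric group, exploiting that a single transposition changes the mean by at most $O(b^2)$. The constant part $\E E=\E[WR]$ is handled by $\|R\|_2\leq Cb/B_n\leq C\Delta$.

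The main obstacle will be exactly this two-layer concentration: once $\pi$ is frozen, the sum $\sum_{i\ne j}\delta_{ij}(\pi)^2$ is \emph{not} a sum of independent variables (different $(i,j)$-pairs share entries), so \cref{lem:weibull} does not apply directly and a chaos-type Bernstein inequality must be invoked; meanwhile the permutation concentration must cost at most a factor $\sqrt p$ to preserve the linear-in-$p$ target scaling. Once the three terms are all bounded by $Cp\Delta$, and since $|\log\Delta|\leq p_0/2$ follows from the smallness of $\Delta$, \cref{t4} immediately yields $|P(W>x)/P(Z>x)-1|\leq C(1+x)(1+|\log\Delta|+x^2)\Delta$ for $0\leq x\leq\Delta^{-1/3}$, which is the desired conclusion.
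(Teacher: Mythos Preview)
Your overall strategy---the random-transposition exchangeable pair, the reduction via \cref{t4} with $r_0=\alpha_1=1$, $\Delta_1=\Delta$, $p_0=\Delta^{-2/3}$---is exactly what the paper does (the Wasserstein bound is packaged as \cref{p2} and proved in \cref{appendix:comb}). Two points deserve comment.

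First, and this is a genuine gap: you cite \cref{c1}, yet mention truncating at scale $\eta_t(p)$ for the quartic term. These are inconsistent, and the distinction matters. In \cref{c1} the third term is $p\sqrt{\lambda^{-1}\|\E[D^4\mid\mcl G]\|_p}$ with \emph{no} truncation. Here $\lambda^{-1}\E[D^4\mid\mcl G]=n^{-1}\sum_{i<j}(Y_\pi^{(ij)})^4$, and each $(Y_\pi^{(ij)})^4$ is only $\psi_{1/4}$ with norm of order $(b/B_n)^4$; any concentration argument for the sum then carries a $p^4$-type tail term, giving $\lambda^{-1}\|\E[D^4\mid\mcl G]\|_p\leq C(b/B_n)^4(n+p^4)$ and hence $p\sqrt{\lambda^{-1}\|\E[D^4\mid\mcl G]\|_p}\leq Cp\Delta\sqrt{1+p^4/n}$. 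This forces $p\leq Cn^{1/4}$, which via \cref{t4} yields only $x\leq Cn^{1/8}$ rather than $x\leq\Delta^{-1/3}$. The paper instead works directly with the finer bound \eqref{abst-est}: the indicator $1_{\{|D|\leq\eta_t(p)\}}$ turns $(Y_\pi^{(ij)})^41_{\{|Y_\pi^{(ij)}|\leq\eta_t\}}$ into a $\psi_{1/2}$ quantity of norm at most $C\eta_t^2(p)b^2/B_n^2$, so \cref{lem:weibull} costs only $p^2$ instead of $p^4$. After integration in $t$ this produces $C(p\sqrt n+p^{5/2})b^2/B_n^2$, which collapses to $Cp\Delta$ on the range $p\leq\Delta^{-2/3}$ using $B_n^2\leq Cnb^2$. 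So you must use \eqref{abst-est}, not \cref{c1}.

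Second, your two-layer concentration (fix $\pi$, then Hanson--Wright/chaos; then bounded differences on $S_n$) is a valid route but more technical than the paper's. For the fixed-$\pi$ layer the paper avoids polynomial-chaos inequalities altogether: it decomposes $\sum_{i<j}$ into $O(n)$ sub-sums over \emph{disjoint} pairs $(i,j)$; conditional on the unordered sets $\{\pi(i),\pi(j)\}$, each sub-sum is over independent summands and \cref{lem:weibull} applies directly---no operator norms to compute. For the permutation layer your bounded-difference idea is the paper's in spirit; the paper implements it via the explicit coupling $\pi_{ijkl}$ with $\mcl L(\pi_{ijkl})=\mcl L(\pi\mid\pi(i)=k,\pi(j)=l)$ from \cite{CF15}, obtaining a sub-gamma MGF estimate $\E e^{tV}\leq e^{Ct^2}$ for $|t|\leq C\sqrt n$ and hence $\|V\|_p\leq C(\sqrt p+p/\sqrt n)$.
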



\begin{remark}
Because \cite{Fr22}'s result is stated under a different condition and he did not provide a rate of convergence, here we only compare our result with that in \cite{LZ21}. In our notation, their bound is $C (1+x^3) \frac{n^{1/2}b^2}{B_n^2}\cdot (\frac{n^{1/2}b}{B_n})^5$. From \eq{004}, we have $B_n^2\leq C n b^2$ and $B_n^2$ in general can be of smaller order than $n b^2$. Therefore, except for the logarithmic term in the error rate, our bound is in general better.
\end{remark}

We prove \cref{t1} via the following $p$-Wasserstein bound between $W$ and $Z$.

\begin{proposition}\label{p2}
Under the assumptions of \cref{t5},
there exists a positive absolute constant $C$ such that
\ben{\label{006}
\mcl{W}_p(W, Z)\leq C(\frac{ p \sqrt{n} }{B_n^2}+\frac{p^{5/2}}{B_n^2} )b^2\quad \forall\ p\geq 2.
}
\end{proposition}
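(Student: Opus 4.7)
The plan is to mirror \cref{subsec:indep}: construct an exchangeable pair $(W, W')$, apply \cref{t1} in its one-dimensional form \cref{c1}, and bound the resulting $L^p$-norms via \cref{lem:weibull}. Let $(I,J)$ be a uniformly chosen pair of distinct indices in $[n]$, independent of $(\mathbb{X},\pi)$. Form $\pi'$ from $\pi$ by swapping the values at positions $I$ and $J$, and set $W'=B_n^{-1}\sum_i X_{i\pi'(i)}$, $\mathcal{G}=\sigma(\mathbb{X},\pi)$. A random transposition preserves the uniform law on $S_n$, so $(W,W')$ is exchangeable with
\[
D := W'-W = B_n^{-1}\bigl(X_{I\pi(J)}+X_{J\pi(I)}-X_{I\pi(I)}-X_{J\pi(J)}\bigr).
\]
Averaging over $(I,J)$ and using $\sum_k c_{ik}=\sum_i c_{ik}=0$ yields $\E[D\mid\mathcal{G}] = -\tfrac{2}{n-1}(W+R)$ with $R=-\tfrac{1}{nB_n}\sum_{i,k}X_{ik}$, so \eqref{1} holds with $\lambda=2/(n-1)$.

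Setting $T_{ij}:=X_{i\pi(j)}+X_{j\pi(i)}-X_{i\pi(i)}-X_{j\pi(j)}$, we have $\E[D^k\mid\mathcal{G}] = \frac{1}{n(n-1)B_n^k}\sum_{i\neq j}T_{ij}^k$ for $k\in\{2,4\}$. By \cref{c1} (or \cref{t1} with the truncation device of \cref{subsec:indep} for sharper $p$-dependence in the fourth moment), it suffices to bound $\|R\|_p$, $\|E\|_p$, and $\|\E[D^4\mid\mathcal{G}]\|_p$.

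For $R$: since $\sum c_{ik}=0$, $\sum_{i,k}X_{ik}$ is a centred sum of $n^2$ independent sub-exponential variables with $\psi_1$-norm $\leq b$, and \cref{lem:weibull} gives $\|R\|_p\leq Cb(\sqrt p+p/n)/B_n$. For the fourth moment: $T_{ij}^4\leq C(X_{i\pi(j)}^4+X_{j\pi(i)}^4+X_{i\pi(i)}^4+X_{j\pi(j)}^4)$, and summing over $i\neq j$ gives at most $C(\sum_{i,k}X_{ik}^4+n\sum_i X_{i\pi(i)}^4)$, which is controlled by \cref{lem:weibull} after truncation as in \cref{subsec:indep}. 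For $E=\frac{1}{2\lambda}\E[D^2\mid\mathcal{G}]-1$: expanding $T_{ij}^2$ and summing over $i\neq j$ yields diagonal sums $\sum_{i,k}X_{ik}^2$ and $\sum_i X_{i\pi(i)}^2$ together with cross terms such as $\bigl(\sum_i X_{i\pi(i)}\bigr)^2$, $\sum_i X_{i\pi(i)}\sum_k X_{ik}$, and $\sum_{i,k}X_{ik}X_{\pi^{-1}(k)k}$. Their deterministic parts, after division by $4nB_n^2$, combine to exactly $1$ thanks to the variance identity \eqref{004}, cancelling the $-1$ in $E$. Each remaining fluctuation, after conditioning on $\pi$, is a sum of independent sub-Weibull variables with coefficients bounded uniformly in $\pi$, to which \cref{lem:weibull} applies. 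Substituting the three bounds into \cref{c1} and evaluating the integrals exactly as in \cref{subsec:indep} delivers the stated $\mcl{W}_p(W,Z)\leq C(p\sqrt n+p^{5/2})b^2/B_n^2$.

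The main obstacle is the estimate for $\|E\|_p$: several cross-terms from $T_{ij}^2$ couple the randomness of $\pi$ and $\mathbb{X}$ non-separably (e.g.\ $\sum_{i,k} X_{ik}X_{\pi^{-1}(k)k}$), so one must first identify exactly how their means combine to cancel the $-1$ via the formula \eqref{004} for $B_n^2$, and then argue that, conditional on $\pi$, each residual fluctuation is a sum of independent sub-Weibull variables whose coefficients stay $O(1)$ (or $O(1/n)$) uniformly in $\pi$, so that the $\sqrt p$- and $p$-scaling prescribed by \cref{lem:weibull} propagates to the desired bound.
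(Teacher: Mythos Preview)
Your exchangeable pair and linearity condition match the paper, and the bound on $\|R\|_p$ is fine. The gap lies in $\|E\|_p$, exactly at the point you flag. Conditioning on $\pi$ and applying \cref{lem:weibull} controls only the difference between $E$ and its conditional expectation $\E^\pi E$ given $\pi$; the residual $\E^\pi E-\E E$ is a function of the random permutation alone---for instance, terms like $\sum_i(c_{i\pi(i)}^2+\sigma_{i\pi(i)}^2)-n^{-1}\sum_{i,k}(c_{ik}^2+\sigma_{ik}^2)$ arise---and \cref{lem:weibull}, which requires independent summands, does not apply to such combinatorial sums. Your last paragraph asserts that all residuals are (conditionally) sums of independent sub-Weibull variables, but the $\pi$-only part is not of this form. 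A second difficulty created by the expansion route is that $\sum_{i\neq j}X_{i\pi(i)}X_{j\pi(j)}$ contributes a $W^2/(2n)$ term to $E$; bounding $\|W^2\|_p$ requires $\|W\|_{2p}$, which itself mixes $\mathbb X$- and $\pi$-randomness and is not available without the very permutation concentration you are missing.

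The paper avoids both difficulties by \emph{not} expanding $T_{ij}^2$. It writes $E=H_{21}+H_{22}$ where $H_{22}=\frac{1}{2\lambda}\E D^2-1=\E(RW)$ is deterministic and $|H_{22}|\leq 1/n$ (so no $W^2$ term ever appears), and $H_{21}=\frac{1}{2n}\sum_{i<j}\bigl[(Y_\pi^{(ij)})^2-\E(Y_\pi^{(ij)})^2\bigr]$ is split further into the $\mathbb X$-fluctuation given $\pi$ and the $\pi$-fluctuation of $\E^\pi(Y_\pi^{(ij)})^2$. For the former, since $(Y_\pi^{(ij)})^2$ and $(Y_\pi^{(ik)})^2$ share the variable $X_{i\pi(i)}$, the paper partitions $\{i<j\}$ into $O(n)$ families of pairwise-disjoint index pairs; within each family the summands become conditionally independent (given the unordered images $\{\pi(i),\pi(j)\}$) and \cref{lem:weibull} applies. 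For the latter it uses a sub-gamma moment-generating-function argument: writing $V$ for the normalized $\pi$-fluctuation, a coupling $\pi_{ijkl}\sim\mathcal L(\pi\mid\pi(i)=k,\pi(j)=l)$ differing from $\pi$ in boundedly many positions yields $|h'(t)|\leq Ct\,h(t)$ for $h(t)=\E e^{tV}$, hence $\|V\|_p\leq C(\sqrt p+p/\sqrt n)$. These two devices---the disjoint-pair partition and the permutation MGF coupling---are the missing ingredients in your proposal.
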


In the following, we prove \cref{t5} using \cref{p2}. The proof of \cref{p2} is deferred to \cref{appendix:comb}.

\begin{proof}[Proof of \cref{t5}]
We apply \cref{t4} with $r_0=\alpha_1=1$ and
\be{
\Delta_1:=\Delta=\frac{n^{1/2}b^2}{B_n^2}, \ p_0=\Delta_1^{-2/3}=(\frac{B_n^2}{n^{1/2} b^2})^{2/3}.
}
The conditions in \cref{t4} are satisfied by choosing $c$ in the statement of \cref{t5} to be sufficiently small and using $B_n^2\leq C n b^2$ from \eq{004} to reduce the bound \eq{006} to $Cpn^{1/2}b^2/B_n^2$ for $2\leq p\leq p_0$.
\end{proof}

\subsection{Moderate deviations on Wiener chaos}\label{sec:wiener}

Let $X$ be an isonormal Gaussian process over a real separable Hilbert space $\mathfrak{H}$. Given an integer $q\geq2$, we consider the $q$-th multiple Wiener--It\^ointegral $W=I_q(f)$ of $f\in\mathfrak{H}^{\odot q}$ with respect to $X$. Here, $\mathfrak{H}^{\odot q}$ denotes the $q$-th symmetric tensor power of $\mathfrak{H}$. 
Here and below, we use standard concepts and notation in Malliavin calculus. We refer to \cite{NoPe12} for all unexplained notation. 

We assume $\Var[W]=q!\|f\|_{\mf H^{\otimes q}}^2=1$ for simplicity. The celebrated fourth moment theorem states that (cf.~Theorem 5.2.6 in \cite{NoPe12})
\[
\sup_{x\in\mathbb R}|P(W\leq x)-P(Z\leq x)|\leq\sqrt{\frac{q-1}{3q}(\E W^4-3)},
\] 
where $Z\sim N(0,1)$. 
\cite{ScTh16} obtained a corresponding Cram\'er-type moderate deviation result. 
Here, we use our approach to prove a version of such moderate deviation results.

To state our result, we need to introduce mixed injective norms of elements in $\mf H^{\odot q}$ which were originally introduced in \cite{La06} (see also \cite{Le11}). 
A \textit{partition} of $[q]$ is a collection of nonempty disjoint sets $\{J_1,\dots,J_k\}$ such that $[q]=\bigcup_{l=1}^kJ_l$. We denote by $\Pi_q$ the set of partitions of $[q]$. For any $h\in\mf H^{\odot q}$ and $\mcl J=\{J_1,\dots,J_k\}\in\Pi_q$, define
\[
\|h\|_{\mcl J}:=\sup\{\langle h,u_1\otimes\cdots\otimes u_k\rangle_{\mf H^{\otimes q}}:u_l\in \mf H^{\otimes|J_l|},\|u_l\|_{\mf H^{\otimes|J_l|}}\leq1,l=1,\dots,k\}.
\]
In the remainder of this section, $C_q$ denotes a positive constant, which depends only on $q$ and may be different in different expressions. 

\begin{theorem}\label{thm:wiener}
Under the above setting, let 
\[
\ol\Delta:=\max_{r\in[q-1]}\max_{\mcl J\in\Pi_{2q-2r}}\|f\wt\otimes_rf\|_{\mcl J},
\]
where $f\wt\otimes_rf$ denotes the symmetrization of $f\otimes_rf$ with $\otimes_r$ the $r$-th contraction operator (cf.~\cite[Eq.\ (B.3.1)$\&$(B.4.4)]{NoPe12}).  
If 
\ben{\label{wiener-x-range}
0\leq x\leq\min_{r\in[q-1]}\min_{\mcl J\in\Pi_{2q-2r}}\|f\wt\otimes_rf\|_{\mcl J}^{-1/(|\mcl J|+2)},
}
then
\ben{\label{eq:wiener}
\left|\frac{P(W>x)}{P(Z>x)}-1\right|\leq C_q(1+x)\left\{\max_{r\in[q-1]}\max_{\mcl J\in\Pi_{2q-2r}}(|\log\ol\Delta|+x^2)^{\frac{1+|\mcl J|}{2}}\|f\wt\otimes_rf\|_{\mcl J}+\ol\Delta\right\}.
}
\end{theorem}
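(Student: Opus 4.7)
The plan is to first establish a $p$-Wasserstein bound of the form
\[
\mcl W_p(W,Z)\le C_q\max_{r\in[q-1]}\max_{\mcl J\in\Pi_{2q-2r}}p^{(1+|\mcl J|)/2}\|f\wt\otimes_rf\|_{\mcl J},\qquad p\ge2,
\]
and then feed it into \cref{t4} with the finite family of index pairs $(r,\mcl J)$ playing the role of the parameter $r$ there, $\alpha_{(r,\mcl J)}=(1+|\mcl J|)/2$ and $\Delta_{(r,\mcl J)}=\|f\wt\otimes_rf\|_{\mcl J}$. Hypercontractivity will make the $p$-Wasserstein bound valid for every $p\ge2$, so the side condition in \cref{t4} is vacuous (effectively $p_0=\infty$); the exponent $1/(2\alpha_{(r,\mcl J)}+1)=1/(|\mcl J|+2)$ then matches the range \eqref{wiener-x-range} of $x$ exactly, and \eqref{021} specializes to the target bound \eqref{eq:wiener}.

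For the $p$-Wasserstein bound itself, I would invoke \cref{c1} with the exchangeable pair furnished by Mehler's formula: let $X'$ be an independent copy of the underlying isonormal process $X$ and, for $t>0$, set $X^t(h)=e^{-t}X(h)+\sqrt{1-e^{-2t}}X'(h)$; define $W'=I_q(f)$ computed against $X^t$. Since the Ornstein--Uhlenbeck semigroup acts by $P_tI_q(f)=e^{-qt}I_q(f)$, the pair $(W,W')$ is exchangeable and satisfies $\E[W'-W\mid\sigma(X)]=-(1-e^{-qt})W$, so \cref{c1} applies with $R=0$, $\lambda=1-e^{-qt}$ and $\mcl G=\sigma(X)$. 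The product formula for multiple Wiener integrals then expresses, in the small-$t$ regime, $E=\frac{1}{2\lambda}\E[D^2\mid\sigma(X)]-1$ as a finite linear combination of $I_{2q-2r}(f\wt\otimes_rf)$, $r\in[q-1]$, with coefficients depending only on $q$ and $r$ (the same chaos decomposition as $\frac{1}{q}\|DW\|_{\mf H}^2-1$).

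The analytic backbone is the moment inequality of \cite{La06} for Gaussian chaos: for every $g\in\mf H^{\odot k}$ and $p\ge2$,
\[
\|I_k(g)\|_p\le C_k\max_{\mcl J\in\Pi_k}p^{|\mcl J|/2}\|g\|_{\mcl J}.
\]
Applied componentwise this yields $\|E\|_p\le C_q\max_{r,\mcl J}p^{|\mcl J|/2}\|f\wt\otimes_rf\|_{\mcl J}$, which combines with the $\sqrt p$ factor in front of $\|E\|_p$ in \cref{c1} to produce the announced exponent $(1+|\mcl J|)/2$.

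The principal obstacle I anticipate is the sharp treatment of the fourth-moment term $p\sqrt{\lambda^{-1}\|\E[D^4\mid\sigma(X)]\|_p}$ from \cref{c1}, because a crude estimate introduces extra powers of $p$ that would spoil the matching. My plan is to retain the full integrated form \eqref{abst-est} of \cref{t1} and perform a truncation / split-integral manipulation analogous to the one in \cref{subsec:indep}: $\E[D^4\mid\sigma(X)]$ itself admits a multiple-integral expansion in iterated contractions of $f$ whose chaos components can again be bounded via Latala's inequality in terms of the same mixed norms $\|f\wt\otimes_rf\|_{\mcl J}$, and balancing the resulting $p$-dependence against $\lambda^{-1/2}$ (which can be tuned by choosing $t$ inside the integral representation) should keep the total contribution of this term at the level $C_q\,p^{(1+|\mcl J|)/2}\|f\wt\otimes_rf\|_{\mcl J}$, completing the proof.
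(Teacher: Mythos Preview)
Your reduction to \cref{t4} and the target $p$-Wasserstein inequality are exactly what the paper does. The paper, however, does not touch \cref{t1} or \cref{c1} here: it obtains the $p$-Wasserstein bound in one line from the Stein-kernel transport inequality of \cite{LeNoPe15} (their Proposition~3.4(ii)), namely $\mcl W_p(W,Z)\le C\sqrt p\,\|\tau(W)-1\|_p$ with $\tau(W)=q^{-1}\|DW\|_{\mf H}^2$, and then applies the product formula together with Lata{\l}a's mixed-norm estimate (stated as \cref{lem:wiener}). There is no fourth-moment term to control at all.

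Your exchangeable-pair route can be salvaged, but the last paragraph contains a real gap. The OU parameter of the Mehler pair---call it $s$---fixes $\lambda=1-e^{-qs}$ once the pair is chosen; the variable $t$ in \eqref{abst-est} is the score-interpolation time coming from \eqref{ov-est} and is unrelated to $s$, so ``tuning $\lambda$ by choosing $t$ inside the integral'' is not an available move. Moreover, for any fixed $s>0$ the quantity $E$ carries an extra $I_{2q}(f\wt\otimes_0 f)$ component (with coefficient $\lambda/2$), and $\E[D^4\mid X]$ lives in chaoses up to order $4(q-1)$ whose kernels are iterated contractions such as $(f\wt\otimes_r f)\wt\otimes_{r'}(f\wt\otimes_{r''}f)$, not the single contractions $f\wt\otimes_rf$; bounding their mixed injective norms by the ones in the statement is an additional, nontrivial step you have not addressed. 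The clean repair is simply to send $s\downarrow0$ for each fixed $p$: hypercontractivity gives $\lambda^{-1}\|\E[D^4\mid X]\|_p=O(s)\to0$, the spurious $I_{2q}$ term disappears, and $E\to\tau(W)-1$ in $L^p$, so the bound from \cref{c1} collapses to exactly the Stein-kernel bound the paper invokes directly.
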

The proof of \cref{thm:wiener} is deferred to \cref{appendix:wiener}. 
\begin{remark}[Optimality on the range of $x$]
Condition \eqref{wiener-x-range} is sharp when $q=2$. To see this, assume that $\mf H$ is infinite-dimensional and let $(e_i)_{i=1}^\infty$ be an orthonormal basis of $\mf H$. Taking $f=\frac{1}{\sqrt{2n}}\sum_{i=1}^ne_i^{\otimes 2}$, we obtain $W=\frac{1}{\sqrt{2n}}\sum_{i=1}^n(X(e_i)^2-1)$ (cf.~Theorem 2.7.7 in \cite{NoPe12}). Since $X(e_i)$ are i.i.d.~standard normal variables, $W$ is a sum of i.i.d.~random variables with the centered $\chi^2$-distribution with 1 degree of freedom. 
Meanwhile, since $|\langle \sum_{i=1}^ne_i^{\otimes 2},u_1\otimes u_2\rangle_{\mf H^{\otimes2}}|\leq\|u_1\|_{\mf H}\|u_2\|_{\mf H}$ for any $u_1,u_2\in\mf H$ by Bessel's inequality and the equality can be attained,
\ba{
\|f\wt\otimes_1f\|_{\{1\},\{1\}}=\frac{1}{2n}\left\|\sum_{i=1}^ne_i^{\otimes 2}\right\|_{\{1\},\{1\}}=\frac{1}{2n}.
}
Also, 
\ba{
\|f\wt\otimes_1f\|_{\{1,2\}}=\frac{1}{2n}\left\|\sum_{i=1}^ne_i^{\otimes 2}\right\|_{\mf H^{\otimes2}}=\frac{1}{2\sqrt n}.
}
Thus, \eqref{wiener-x-range} is rewritten as $0\leq x\leq\min\{(2n)^{1/4},(4n)^{1/6}\}$. In view of Theorem 2 in \cite[Chapter VIII]{Pe75}, this condition is sharp to obtain a bound like \eqref{eq:wiener}. 

When $q>2$, it is unclear whether \eqref{wiener-x-range} is sharp or not. 
By an analogous argument to the above but using Theorem 2 in \cite{Li61}, we can show that $x$ must satisfy $x=O(\ul\Delta^{-1/(2q-2)-\eps})$ for any $\eps>0$, where $\ul\Delta:=\|f\wt\otimes_1f\|_{\{1\},\dots,\{2q-2\}}$. However, \eqref{wiener-x-range} requires at least $x=O(\ul\Delta^{-1/(2q)})$. 
\end{remark}

Next, we make connections to the fourth moment theorem.
For any $\mcl J\in\Pi_{2q-2r}$ with $r\in[q-1]$, we have $|\mcl J|\leq2q-2r\leq2q-2$ and
\[
\|f\wt\otimes_rf\|_{\mcl J}\leq\|f\wt\otimes_rf\|_{\mf H^{\otimes(2q-2r)}}\leq\|f\otimes_rf\|_{\mf H^{\otimes(2q-2r)}}\leq\norm{f}_{\mf H^{\otimes q}}^2= 1/q!,
\]
where
the first inequality is from $\|h\|_{\mcl J}\leq \|h\|_{\mf H^{\otimes(2q-2r)}}$ for any $h\in \mf H^{\odot (2q-2r)}$, the second inequality is from the definition of symmetrization and the triangle inequality,
the third inequality follows by the Cauchy--Schwarz inequality.
Therefore, noting that the function $(0,1)\ni \delta \mapsto \delta (y+|\log \delta |)^{(2q-1)/2}\in(0,\infty)$ is increasing for any $y\geq (2q-1)/2$, we particularly obtain by \cref{thm:wiener} 
\besn{\label{wiener-simple}
\left|\frac{P(W>x)}{P(Z>x)}-1\right|&\leq C_q(1+x)(1+|\log\ol\Delta|+x^2)^{(2q-1)/2}\ol\Delta   \\
&\leq C_q(1+x)(1+|\log\Delta|+x^2)^{(2q-1)/2}\Delta
}
for all $0\leq x\leq\Delta^{-1/(2q)}$, where
\[
\Delta:=\max_{r\in[q-1]}\|f\otimes_rf\|_{\mathfrak H^{\otimes(2q-2r)}}.
\]
From \cite[Eq.\ (5.2.6)]{NoPe12}, we have $\Delta\leq C_q\sqrt{\E W^4-3}$. Therefore, we obtain a Cram\'er-type moderate deviation result for the fourth moment theorem:
\begin{corollary}
Under the above setting,
\be{
\left|\frac{P(W>x)}{P(Z>x)}-1\right|\leq C_q(1+x)(1+|\log\kappa_4(W)|+x^2)^{(2q-1)/2}\sqrt{\kappa_4(W)}
}
for all $0\leq x\leq\kappa_4(W)^{-1/(4q)}$, where $\kappa_4(W)=\E W^4-3$ is the fourth cumulant of $W$. 
\end{corollary}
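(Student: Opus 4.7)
The plan is to derive the corollary as a short consequence of the simplified bound \eqref{wiener-simple} and the fourth-moment comparison
\[
\Delta:=\max_{r\in[q-1]}\|f\otimes_rf\|_{\mf H^{\otimes(2q-2r)}}\leq C_q\sqrt{\kappa_4(W)},
\]
which is already recalled from \cite[Eq.\ (5.2.6)]{NoPe12} in the paragraph preceding the statement. What remains is monotonicity and bookkeeping of constants.

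First I would verify the range condition. Since $\Delta\leq C_q\sqrt{\kappa_4(W)}$, we have $\Delta^{-1/(2q)}\geq c_q\kappa_4(W)^{-1/(4q)}$ for some $c_q>0$ depending only on $q$. Hence the hypothesis $0\leq x\leq\kappa_4(W)^{-1/(4q)}$ implies $0\leq x\leq c_q^{-1}\Delta^{-1/(2q)}$, which differs from the range in \eqref{wiener-simple} only by a multiplicative constant depending on $q$. As noted in the paragraph following \cref{cor:indep}, such an enlargement of the range in \cref{t4} merely affects the implicit constant, so \eqref{wiener-simple} remains valid on this slightly enlarged range with a constant depending only on $q$.

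Next I would replace $\Delta$ on the right-hand side by $C_q\sqrt{\kappa_4(W)}$. The key fact is that $\delta\mapsto\delta(1+y+|\log\delta|)^{(2q-1)/2}$ is increasing on $(0,\delta_q)$ for a small $\delta_q>0$ and all $y\geq0$; a direct differentiation shows this holds as soon as $|\log\delta|\geq(2q-3)/2$. Thus, in the regime $\kappa_4(W)\leq c_q$ with $c_q$ small enough, substituting $\Delta$ by the larger quantity $C_q\sqrt{\kappa_4(W)}$ only enlarges the right-hand side of \eqref{wiener-simple}; the extra additive $\log C_q$ appearing inside $|\log(C_q\sqrt{\kappa_4(W)})|=\tfrac12|\log\kappa_4(W)|+O_q(1)$ is absorbed into the prefactor.

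The complementary regime $\kappa_4(W)\geq c_q$ is handled trivially: the hypothesis then forces $x$ into a bounded interval depending only on $q$, so $P(W>x)\leq1$ and $P(Z>x)^{-1}$ are both bounded by constants depending only on $q$, while the right-hand side of the claim is bounded below by a positive constant of the same kind. The only point requiring any care is merging constants across these two regimes, which closely mirrors the endgame of the proof of \cref{t4} and is the mildest of the bookkeeping steps; I do not expect any substantive obstacle.
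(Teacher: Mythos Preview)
Your proposal is correct and follows essentially the same route as the paper: the corollary is stated in the paper as an immediate consequence of the simplified bound \eqref{wiener-simple} together with $\Delta\leq C_q\sqrt{\kappa_4(W)}$, and the monotonicity of $\delta\mapsto\delta(y+|\log\delta|)^{(2q-1)/2}$ that you invoke is exactly the argument used just above \eqref{wiener-simple} to pass from $\ol\Delta$ to $\Delta$. Your write-up simply fills in the bookkeeping (range extension by a $q$-dependent constant, the small-versus-large $\kappa_4(W)$ split) that the paper leaves implicit.
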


\begin{remark}[Comparison with \cite{ScTh16}]
Using the method of cumulants, \cite{ScTh16} give in their Theorem 5 a Cram\'er-type moderate deviation result for multiple Wiener-It\^ointegrals in the following form: Let
\[
\alpha(q):=\begin{cases}
(q+2)/(3q+2) & \text{if $q$ is even},\\
(q^2-q-1)/(q(3q-5)) & \text{if $q$ is odd}.
\end{cases}
\]
Then, there are constants $c_0,c_1,c_2>0$ depending only on $q$ such that, for $\Delta^{-\alpha(q)}\geq c_0$ and $0\leq x\leq c_1\Delta^{-\alpha(q)/(q-1)}$,
\ben{\label{eq:scth}
\left|\log\frac{P(W> x)}{P(Z>x)}\right|\leq c_2(1+x^3)\Delta^{\alpha(q)/(q-1)}.
}
On the other hand, by the inequality $|\log(1+y)|\leq2|y|$ for $|y|\leq1/2$, our simplified bound \eqref{wiener-simple} implies that there are constants $c'_0,c'_1,c'_2>0$ depending only on $q$ such that, for $\Delta\leq c'_0$ and $0\leq x\leq c'_1\Delta^{-1/(2q)}$,
\[
\left|\log\frac{P(W> x)}{P(Z>x)}\right|\leq c_2'(1+x)(1+|\log\Delta|+x^2)^{(2q-1)/2}\Delta.
\]
We compare this bound with \eqref{eq:scth}. 
Note that $\Delta\leq1$. 
Then, since we can easily check that $\alpha(q)+1/(2q)<1/2$ if and only if $q\geq5$, Theorem 5 in \cite{ScTh16} imposes a weaker condition on $x$ than ours when $q<5$. However, note that we need $x^3\Delta^{\alpha(q)/(q-1)}=o(1)$ to get a vanishing bound in \eqref{eq:scth}. This condition is always stronger than our condition $x^{2q}\Delta=o(1)$ because $\alpha(q)\leq1/2$. Moreover, under the condition $x^3\Delta^{\alpha(q)/(q-1)}=o(1)$, we always have $x^{2q}\Delta=o(x^3\Delta^{\alpha(q)/(q-1)})$ since 
\[
\frac{\alpha(q)}{3(q-1)}\leq\frac{q-1-\alpha(q)}{(2q-3)(q-1)}.
\] 
So our bound always gives a better rate of convergence to 0 than \eqref{eq:scth}.
\end{remark}


\subsection{Homogeneous sums}

Let $X_1,\dots,X_n$ be independent random variables with mean 0 and variance 1. 
We consider a multilinear homogeneous sum of these variables, i.e.~a random variable of the form
\[
W=\sum_{i_1,\dots,i_q=1}^nf(i_1,\dots,i_q)X_{i_1}\cdots X_{i_q},
\]
where $q\geq2$ and $f:[n]^q\to\mathbb R$ is a symmetric function with vanishing diagonals (i.e.~$f(i_1,\dots,i_q)=0$ whenever $i_r=i_s$ for some indices $r\ne s$). $W$ has mean 0 by assumption. For simplicity, we assume that $W$ has variance 1, i.e.
\[
\Var[W]=q!\sum_{i_1,\dots,i_q=1}^nf(i_1,\dots,i_q)^2=1.
\]
$W$ is a prominent example of degenerate $U$-statistics of order $q$, and limit theorems for such statistics have been well-studied in the literature. 
In particular, the prominent work of \cite{deJo90} established the following sufficient conditions for the asymptotic normality: $W$ converges in law to $N(0,1)$ if the following conditions are satisfied:
\begin{enumerate}[label=(\roman*)]

\item The fourth cumulant of $W$ converges to 0. That is, $\E W^4$ converges to 3. 

\item The maximal influence
\[
\mcl M(f):=\max_{i\in[n]}\sum_{i_2,\dots,i_q=1}^nf(i,i_2,\dots,i_q)^2
\]
converges to 0.

\end{enumerate}
Corresponding absolute error bounds were investigated in e.g.~\cite{NoPeRe10,DoPe17} and \cite{FaKo22}. For example, Corollary 2.1 in \cite{FaKo22} gives the following optimal 1-Wasserstein bound (throughout this section, $C_q$ denotes a constant, which depends only on $q$ and may be different in different expressions):
\[
\mcl W_1(W,Z)\leq C_q\sqrt{|\E W^4-3|+\left(\max_{i\in[n]}\E X_i^4\right)^q\mcl M(f)},
\]
where $Z\sim N(0,1)$. 
However, to our knowledge, no relative error bound for this type of CLT is available in the literature (but see \cref{rem:homo2}). Using our approach, we can obtain such a bound as follows:
\begin{theorem}\label{thm:dejong}
Under the above setting, assume that there exits a constant $K\geq1$ such that $\|X_i\|_{\psi_2}\leq K$ for all $i\in[n]$. Let
\[
M:=\max_{i\in[n]}\E X_i^4,\quad
\Delta:=K^{2q}\sqrt{|\E W^4-3|+M^q\mcl M(f)(1\vee|\log\mcl M(f)|^{2q-2})},
\]
and assume $\Delta<1$. 
Then, for all $0\leq x\leq\Delta^{-\frac{1}{2q+1}}$,
\ben{\label{eq:dejong}
\left|\frac{P(W>x)}{P(Z>x)}-1\right|
\leq C_q(1+x)(|\log\Delta|+x^2)^q\Delta.
}
\end{theorem}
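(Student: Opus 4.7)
The approach is to establish a $p$-Wasserstein bound of the form
\[
\mcl W_p(W,Z)\leq C_q p^q\Delta \quad\text{for}\quad 1\leq p\leq p_0:=\Delta^{-2/(2q+1)},
\]
and then invoke \cref{t4} with $r_0=1$, $\alpha_1=q$ and $\Delta_1=\Delta$. Since $\sqrt{p_0}=\Delta^{-1/(2q+1)}$ matches the stated range of $x$ and $|\log\Delta|\leq p_0/2$ under $\Delta<1$, the conclusion \eqref{eq:dejong} will follow at once. The $p$-Wasserstein bound itself will come from \cref{c1} applied to a standard resampling exchangeable pair, in the spirit of \cref{subsec:indep}.

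\textbf{Exchangeable pair and reduction.} Let $I$ be uniform on $[n]$ and independent of everything, and let $(X_i')_{i\in[n]}$ be an independent copy of $(X_i)_{i\in[n]}$. Define
\[
\phi_i:=\sum_{i_2,\dots,i_q}f(i,i_2,\dots,i_q)X_{i_2}\cdots X_{i_q},\qquad W':=W+q(X_I'-X_I)\phi_I.
\]
By the symmetry and vanishing-diagonal properties of $f$, $W'$ coincides with $W$ after replacing $X_I$ by $X_I'$, so $(W,W')$ is exchangeable; with $\mcl G:=\sigma(X_1,\dots,X_n)$ and $D:=W'-W$, a direct computation gives $\E[D\mid\mcl G]=-(q/n)W$. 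Therefore \cref{c1} applies with $\lambda=q/n$ and $R=0$, yielding
\[
\mcl W_p(W,Z)\leq C\Bigl(\sqrt p\,\|E\|_p+p\sqrt{\lambda^{-1}\|\E[D^4\mid\mcl G]\|_p}\Bigr),
\]
where, after using the variance identity $\E[q\sum_i\phi_i^2]=\Var(W)=1$,
\[
E=\tfrac{q}{2}\sum_{i=1}^n(X_i^2-1)\phi_i^2+\bigl(q\sum_{i=1}^n\phi_i^2-1\bigr).
\]

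\textbf{Estimating the $L^p$ norms.} Both $E$ (a polynomial of degree $2q$) and the upper bound $\lambda^{-1}\E[D^4\mid\mcl G]\leq C_q\sum_i(X_i^4+K^4)\phi_i^4$ (degree $4q$, obtained via $\E[(X_i-X_i')^4\mid\mcl G]\leq C(X_i^4+K^4)$) are polynomials in the sub-Gaussian variables $X_i$, so the hypercontractive inequality for multilinear forms yields $\|P\|_p\leq C_qK^{\deg P}p^{(\deg P)/2}\|P\|_2$. The three $L^2$ ingredients needed are:
(i) the fourth-moment-type variance identity $\|q\sum_i\phi_i^2-1\|_2^2\leq C_q(|\E W^4-3|+M^q\mcl M(f))$, adapting the quantitative de~Jong-type arguments of \cite{NoPeRe10} and \cite{FaKo22};
(ii) the product estimate $\|\sum_i(X_i^2-1)\phi_i^2\|_2^2\leq C_qM^q\mcl M(f)$, obtained by expansion together with $\E\phi_i^4\leq C_qK^{4(q-1)}(\E\phi_i^2)^2$ (hypercontractivity again) and $\sum_i(\E\phi_i^2)^2\leq C_q\mcl M(f)\sum_i\E\phi_i^2\leq C_q\mcl M(f)$;
and (iii) an analogous bound $\|\sum_i(X_i^4+K^4)\phi_i^4\|_2\leq C_qK^{2q}M^{q/2}\sqrt{\mcl M(f)}$. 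Feeding (i)--(iii) through hypercontractivity, absorbing the $\sqrt p$-type mismatch into the polylogarithmic factor $(1\vee|\log\mcl M(f)|^{2q-2})$ in the definition of $\Delta$, then yields $\sqrt p\|E\|_p\leq C_qp^q\Delta$ and $p\sqrt{\lambda^{-1}\|\E[D^4\mid\mcl G]\|_p}\leq C_qp^q\Delta$ for $2\leq p\leq p_0$.

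\textbf{Conclusion and main obstacle.} Combining the two bounds delivers $\mcl W_p(W,Z)\leq C_qp^q\Delta$ for $2\leq p\leq p_0$, and \cref{t4} concludes. The main technical difficulty lies in estimate (i), the sharp $L^2$ identity $\|q\sum_i\phi_i^2-1\|_2^2\lesssim_q|\E W^4-3|+M^q\mcl M(f)$: this is the quantitative form of de~Jong's fourth-moment theorem for homogeneous sums, and extracting the correct non-Gaussian correction $M^q\mcl M(f)$ requires a careful product-of-contractions expansion of $\E W^4$ paralleling the Malliavin-calculus computations in the Wiener chaos setting. The polylogarithmic factor $(1\vee|\log\mcl M(f)|^{2q-2})$ in the definition of $\Delta$ is the price paid for the transition from $L^2$ to $L^p$ via hypercontractivity while preserving the exact $p^q$ growth rate needed to apply \cref{t4} with $\alpha_1=q$.
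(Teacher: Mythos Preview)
Your high-level plan---the resampling exchangeable pair, \cref{c1}, and then \cref{t4} with $\alpha_1=q$---is exactly the paper's strategy, and the decomposition of $E$ into the two pieces $\sum_i(X_i^2-1)\phi_i^2$ and $q\sum_i\phi_i^2-1$ is also right. The gap is in how you propose to pass from $L^2$ to $L^p$.

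The crude hypercontractive bound $\|P\|_p\leq C_qK^{\deg P}p^{(\deg P)/2}\|P\|_2$ is not sharp enough here. Take $H_{11}:=\tfrac{q}{2}\sum_i(X_i^2-1)\phi_i^2$, a polynomial of degree $2q$: your bound gives $\sqrt p\,\|H_{11}\|_p\leq C_qK^{2q}p^{q+1/2}\|H_{11}\|_2$, one factor of $\sqrt p$ too many. Your proposal to ``absorb'' this into the polylogarithmic factor $(1\vee|\log\mcl M(f)|^{2q-2})$ does not work: that would require $p\leq|\log\mcl M(f)|^{2q-2}$, whereas the range $p\leq p_0=\Delta^{-2/(2q+1)}$ that you need for \cref{t4} allows $p$ of polynomial size in $\mcl M(f)^{-1}$. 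The paper instead invokes the Adamczak--Wolff mixed-injective-norm inequality (\cref{lem:aw-est}), which, after reducing to the Gaussian case and expressing $H_{11}$ as a combination of multiple Wiener--It\^o integrals, exploits the diagonal structure $\sum_i e_i^{\otimes2}\tilde\otimes(\mathbf f_i\tilde\otimes_r\mathbf f_i)$ to obtain the sharper $\|H_{11}\|_p\leq C_qp^{q-1/2}\sqrt{\mcl M(f)}$. That half-power saving is essential.

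The same issue arises for $H_2$. The quantity $\lambda^{-1}\E[D^4\mid\mcl G]$ has degree $4q$, so crude hypercontractivity yields $H_2\leq C_qK^{2q}p^{q+1}(\cdots)$, again with an extra power of $p$. The paper does \emph{not} bound this via hypercontractivity at all; it truncates $Q_i(X)^4$ at a level $\delta_i\asymp K^{q-1}\sqrt{\mathrm{Inf}_i(f)}\,(p|\log\mcl M(f)|)^{(q-1)/2}$ and controls the tail via sub-Weibull concentration of $Q_i(X)$. This truncation is in fact the \emph{source} of the factor $|\log\mcl M(f)|^{2q-2}$ in the definition of $\Delta$; it is not a residue of the $L^2\to L^p$ passage. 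So both the mechanism you propose for the log factor and the identification of the ``main obstacle'' are off: the quantitative de~Jong identity $\|q\sum_i\phi_i^2-1\|_2^2\lesssim_q|\E W^4-3|+M^q\mcl M(f)$ is needed but is standard (it is cited from \cite{Ko22}); the genuinely delicate steps are the mixed-injective-norm bound for $H_{11}$ and the truncation for $H_2$.
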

Although \cref{thm:dejong} is the first moderate deviation result corresponding to \cite{deJo90}'s CLT for homogeneous sums in the literature, its optimality is unclear. 
For the case of $q=2$ and $|X_i|\leq K$ a.s., we can obtain the following optimal result. Its proof is a straightforward but very tedious modification of the proof of \cref{thm:dejong} and we leave it to the supplementary material. The proof technique would work for general $q$ if we introduce appropriate notation, but computation of mixed injective norms becomes extremely complicated. We do not pursue it further in this paper.

%
\begin{theorem}\label{thm:qf}
Under the above setting, assume that $q=2$ and there exists a constant $K\geq1$ such that $|X_i|\leq K$ a.s.~for all $i\in[n]$. 
Set $F=(f(i,j))_{1\leq i,j\leq n}$. 
Then, there exists a positive absolute constant $C$ such that
\ben{\label{eq:qf}
\left|\frac{P(W>x)}{P(Z>x)}-1\right|
\leq CK^4(1+x)(|\log\|F\|_{op}|+x^2)\|F\|_{op}
}
for all $0\leq x\leq\norm{F}_{op}^{-1/3}$. 
\end{theorem}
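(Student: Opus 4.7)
The plan is to establish a linear-in-$p$ Wasserstein bound
\ben{\label{qf-pwass-goal}
\mcl W_p(W,Z)\leq CK^4 p\,\|F\|_{op},\qquad 2\leq p\leq c\|F\|_{op}^{-2/3},
}
for absolute constants $c,C>0$, and then apply \cref{t4} with $r_0=\alpha_1=1$, $\Delta_1=K^4\|F\|_{op}$ and $p_0=c\|F\|_{op}^{-2/3}$; this yields \eqref{eq:qf} on $0\leq x\leq\|F\|_{op}^{-1/3}$.

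To prove \eqref{qf-pwass-goal} I would use the standard resampling exchangeable pair: pick $I$ uniformly on $[n]$ independently of $(X_i)_{i=1}^n$, take an independent copy $X_I'$ of $X_I$, and set $W'$ equal to $W$ after replacing $X_I$ by $X_I'$. Using symmetry of $f$ and $f(i,i)=0$, a direct computation yields
\be{
D=W'-W=(X_I'-X_I)g_I,\qquad g_i:=2\sum_{j\ne i}f(i,j)X_j,\qquad \E[D\mid\mcl{G}]=-\tfrac{2}{n}W,
}
with $\mcl{G}=\sigma(X_1,\dots,X_n)$, so \eqref{1} holds with $\lambda=2/n$ and $R=0$. \cref{t1} then reduces \eqref{qf-pwass-goal} to moment bounds on $E=\tfrac14\sum_i(1+X_i^2)g_i^2-1$ and on the truncated conditional fourth moments $\|\E[D^41_{\{|D|\leq\eta_t(p)\}}\mid\mcl{G}]\|_p$ entering \eqref{abst-est}.

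The heart of the argument is the estimate of $\|E\|_p$. Expanding $g_i$ and grouping terms by the multiplicity pattern of the resulting index tuples produces the centered decomposition
\be{
E=2\bigl(X^\top F^2X-\tr(F^2)\bigr)+\sum_i(X_i^2-1)(F^2)_{ii}+\sum_{i\ne j}f(i,j)^2(X_i^2-1)(X_j^2-1)+T,
}
where $T:=\sum_{i,j,k\ \text{pairwise distinct}}(X_i^2-1)f(i,j)f(i,k)X_jX_k$ is a tetrahedral polynomial chaos of degree four. The first summand is controlled by the Hanson-Wright inequality using $\|F^2\|_{op}=\|F\|_{op}^2$ and $\|F^2\|_{H.S.}\leq\|F\|_{op}\|F\|_{H.S.}\leq\|F\|_{op}/\sqrt 2$; the second is a sum of independent centered bounded variables, handled by Bernstein; for the third (a Hanson-Wright bilinear form in $(X_i^2-1)$) the key input is the sharpened estimate
\be{
\|F\odot F\|_{op}\leq\max_i(F^2)_{ii}\leq\|F\|_{op}^2,
}
obtained by Cauchy-Schwarz applied entrywise to $(F\odot F)u$. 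Together these yield $CK^4\sqrt p\,\|F\|_{op}$ in the range $p\leq\|F\|_{op}^{-2/3}$. The tetrahedral chaos $T$ is bounded by the Lata\l a / Adamczak-Wolff moment inequality for polynomial chaoses of independent bounded variables, combined with analogous operator-norm estimates on every mixed injective norm of the coefficient tensor $a_{ijk}=f(i,j)f(i,k)$, producing the same $CK^4\sqrt p\,\|F\|_{op}$ leading order with subleading $p$-corrections absorbed in the chosen range of $p$. The truncated fourth-moment term is handled in the spirit of \cref{subsec:indep}: sub-Gaussianity of $g_i$ (with $\|g_i\|_{\psi_2}\leq CK\|F_{i,\cdot}\|$) gives sub-Weibull bounds on $g_i1_{\{|g_i|\leq\eta\}}$, $g_i^2$ and $g_i^41_{\{|g_i|\leq\eta\}}$ in terms of $\|F_{i,\cdot}\|$, after which \cref{lem:weibull} and the integral evaluations already carried out in \cref{subsec:indep} produce a total contribution of order $CK^4 p\,\|F\|_{op}$.

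The crux is the bookkeeping: in every mixed injective norm appearing in the estimate of $T$ and in the truncated fourth moment, one must extract a product of operator norms $\|F\|_{op}$ rather than the weaker $\|F\|_{H.S.}$ or maximal-influence-type quantities. The required inequalities are individually routine applications of the classical moment bounds for polynomial chaoses in independent bounded variables, but the bookkeeping is extensive---this is why the authors defer the details to the supplementary material. The hypothesis $|X_i|\leq K$ is essential: under only a sub-Gaussian tail the chaos moment bounds acquire extra logarithmic factors that would destroy the linear $p$-dependence of \eqref{qf-pwass-goal} and, via \cref{t4}, the $\alpha_1=1$ scaling of the final relative-error bound \eqref{eq:qf}.
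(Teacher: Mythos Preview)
Your overall plan---establish \eqref{qf-pwass-goal} via the resampling exchangeable pair and then invoke \cref{t4}---matches the paper exactly (the paper's \cref{prop:qf} is precisely \eqref{qf-pwass-goal} with $c=2$). Your treatment of $E$ is also essentially the paper's, just organized differently: the paper splits $E=E_1+E_2$ with $E_1=\sum_i(X_i^2-1)Q_i(X)^2$ and applies the Adamczak--Wolff inequality (\cref{lem:aw-est}) to $E_1$ as a single degree-$4$ polynomial, computing all the mixed injective norms of $\E\nabla^r E_1(X)$; you instead decompose $E$ further into two Hanson--Wright pieces, a Bernstein piece, and the tetrahedral chaos $T$, and handle $T$ by Adamczak--Wolff. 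Both routes work and lead to the same bookkeeping in the end (your Hanson--Wright and Bernstein estimates are special cases of \cref{lem:aw-est}). Two small points: the coefficients in your displayed decomposition of $E$ are swapped (it should be $1\cdot(X^\top F^2X-\tr F^2)+2\sum_i(X_i^2-1)(F^2)_{ii}+\cdots$), and to recover the stated range $0\leq x\leq\|F\|_{op}^{-1/3}$ you should take $\Delta_1=\|F\|_{op}$ and absorb $K^4$ into the constant $A$ of \cref{t4}, not set $\Delta_1=K^4\|F\|_{op}$.

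There is, however, a genuine gap in your treatment of the fourth-moment term. You propose to control $\E[D^41_{\{|D|\leq\eta_t\}}\mid\mcl{G}]$ ``in the spirit of \cref{subsec:indep}'' by applying \cref{lem:weibull} to sums of the $g_i^41_{\{|g_i|\leq\eta\}}$ (and similarly for $R_t$). But \cref{lem:weibull} requires the summands to be independent, and the $g_i=2\sum_{j\neq i}f(i,j)X_j$ are manifestly dependent---each $X_j$ appears in every $g_i$ with $i\neq j$. The Section~\ref{subsec:indep} template works there only because the $Y_i=(X_i'-X_i)/\sqrt n$ are independent. The paper avoids this issue entirely: it uses \cref{c1} (so $R=0$ and no truncation is needed), bounds $\frac{n}{2}\E[D^4\mid X]\leq CK^4\sum_iQ_i(X)^4$ via $|X_i'-X_i|\leq2K$, and then bounds $\bigl\|\sum_iQ_i(X)^4\bigr\|_p$ by treating $\sum_iQ_i(X)^4$ as a degree-$4$ polynomial in $X$ and applying \cref{lem:aw-est} once more---another round of mixed-injective-norm calculations of the same flavour as those for $E_1$ (or, in your language, for $T$). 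That is the step you need to replace your \cref{lem:weibull} appeal with.
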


\begin{remark}[Optimality of \cref{thm:qf}]
The error bound and the range of $x$ in \cref{thm:qf} are optimal. 
To see this, assume that $n$ is even and $X_i$ are i.i.d.~with $\E X_i^3\neq0$. Define the function $f$ as
\[
f(i,j)=
\begin{cases}
1/\sqrt{2n} & \text{if $\{i,j\}=\{2k-1,2k\}$ for some positive integer $k$},\\
0 & \text{otherwise}.
\end{cases}
\]
Then we have 
\[
W=\sum_{k=1}^{n/2}\frac{X_{2k-1}X_{2k}+X_{2k}X_{2k-1}}{\sqrt{2n}}
=\frac{1}{\sqrt{n/2}}\sum_{k=1}^{n/2}X_{2k-1}X_{2k}.
\]
So $W$ is a normalized sum of $n/2$ i.i.d.~random variables with mean 0 and variance 1. 
Since $\E[X_{2k-1}^3X_{2k}^3]=(\E X_1^3)^2\neq0$, we need the condition $x=o(n^{1/6})$ to get a vanishing relative error bound, and in this case the optimal bound is of the form $c(1+x^3)/\sqrt n$ for some constant $c>0$. This result is recovered by \cref{thm:qf} when $x\geq\sqrt{\log n}$ since $\|F\|_{op}=O(n^{-1/2})$. 
\end{remark}

\begin{remark}[Comparison with \cite{SaSt91}]\label{rem:homo2}
\cite{SaSt91} give Cram\'er-type moderate deviation results for polynomial forms of independent random variables in their Theorem 5.1 using the method of cumulants. 
Their result is in terms of 
\[
\max_{\begin{subarray}{c}
r,s\in[q]\\
r+s=q
\end{subarray}}\sqrt{\left(\max_{i_1,\dots,i_r\in[n]}\sum_{i_{r+1},\dots,i_{q}=1}^n|f(i_1,\dots,i_q)|\right)\left(\max_{i_1,\dots,i_s\in[n]}\sum_{i_{s+1},\dots,i_{q}=1}^n|f(i_1,\dots,i_q)|\right)}
\]
and is not directly comparable with the fouth-moment-fluence bound in \cref{thm:dejong}.
Therefore, we only compare their result with ours in the setting of \cref{thm:qf}. 
Suppose that $X_1,\dots,X_n$ are i.i.d. Then, under the assumptions of \cref{thm:qf}, \cite[Theorem 5.1]{SaSt91} leads to a bound of the form $C K^4 (1+x^3)\|F\|_{op,\infty}$, where $\|F\|_{op,\infty}$ is the $\ell_\infty$-operator norm of $F$: $\|F\|_{op,\infty}:=\max_{1\leq i\leq n}\sum_{j=1}^n|f(i,j)|.$ 
Since $\norm{F}_{op}\leq\norm{F}_{op,\infty}$, our bound is better except for the logarithmic term in the error rate.
\end{remark}

\cref{thm:dejong} is a straightforward consequence of the following $p$-Wasserstein bound and \cref{t4}: 
\begin{proposition}\label{prop:homo}
Under the assumptions of \cref{thm:dejong}, for any $2\leq p\leq\mcl M(f)^{-1/2}$, 
\ba{
\mcl W_p(W,Z)
&\leq C_qp^q\Delta.
}
\end{proposition}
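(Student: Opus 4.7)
The plan is to apply \cref{c1} (or the sharper truncated form \eqref{abst-est} of \cref{t1}) to the standard exchangeable pair obtained by resampling one coordinate. Let $I$ be uniform on $[n]$ independent of everything else, let $X_I'$ be an independent copy of $X_I$, and let $W'$ be obtained from $W$ by substituting $X_I'$ for $X_I$. Since $f$ has vanishing diagonals, $W$ is linear in each individual $X_i$, so $D:=W'-W=(X_I'-X_I)\,\partial_I W$, where $\partial_i W:=q\sum_{i_2,\dots,i_q}f(i,i_2,\dots,i_q)X_{i_2}\cdots X_{i_q}$ plays the role of a discrete Malliavin derivative. Euler's identity $\sum_i X_i\partial_i W=qW$ gives $\E[W'-W\mid\mcl{G}]=-(q/n)W$, so the approximate linearity condition \eqref{1} is satisfied with $\lambda=q/n$ and $R=0$. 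The problem then reduces to controlling $\|E\|_p$, where $E=\frac{1}{2q}\sum_i(1+X_i^2)(\partial_i W)^2-1$, together with the conditional fourth-moment term $\lambda^{-1}\|\E[D^4\mid\mcl{G}]\|_p$, which via $\E[(X_I'-X_I)^4\mid X_I]\leq C_q K^4$ is dominated by $C_q K^4\|\sum_i(\partial_i W)^4\|_p$.

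The engine is sub-Gaussian hypercontractivity for polynomial chaos: for any polynomial $P$ of degree $d$ in independent variables with $\|X_i\|_{\psi_2}\leq K$ and $p\geq2$, one has $\|P(X)\|_p\leq(CpK^2)^{d/2}\|P(X)\|_2$. I would split $E=E_1+E_2$ with $E_1=\frac{1}{q}\sum_i\{(\partial_i W)^2-\E(\partial_i W)^2\}$ of degree $2q-2$ and $E_2=\frac{1}{2q}\sum_i(X_i^2-1)(\partial_i W)^2$ of degree $2q$. A classical contraction/pairing argument (the same identity that yields the $L^2$ bound for the de Jong CLT; cf.~\cite{FaKo22}) gives $\|E_1\|_2^2\leq C_q|\E W^4-3|$; a direct second-moment calculation exploiting independence of $X_i^2-1$ from $(\partial_i W)^2$ gives $\|E_2\|_2^2\leq C_q M^q\mcl{M}(f)$; and an analogous pairing argument yields $\E\sum_i(\partial_i W)^4\leq C_q M^{q-1}\mcl{M}(f)$. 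Feeding these $L^2$ bounds into hypercontractivity produces the needed $L^p$ estimates, and the constraint $p\leq\mcl{M}(f)^{-1/2}$ keeps the hypercontractive blow-up $(pK^2)^{q}$ in a controlled regime.

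A naive substitution into \cref{c1} would leave an extra factor of $\sqrt p$ on the $\sqrt p\|E_2\|_p$ piece and miss the logarithmic term $|\log\mcl{M}(f)|^{2q-2}$ in $\Delta$. I would resolve both issues by (i) handling $E_2$ via a Hanson--Wright-type bound for weighted chaos, conditioning on the $(\partial_i W)^2$'s (which are independent of the $X_i^2-1$ factor), and (ii) using the truncated form \eqref{abst-est} of \cref{t1} in place of \cref{c1}, truncating $D$ at level $\eta_t(p)\simeq\sqrt{(e^{2t}-1)/p}$ and exploiting the sub-Gaussian tail of $\partial_I W$ (inherited from hypercontractivity) on the complementary event. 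Integrating in $t$ then converts the $L^p$-to-$L^2$ polynomial blow-up on the ``large $D$'' event into the claimed logarithmic penalty while saving one factor of $\sqrt p$. The main obstacle is the bookkeeping needed to match hypercontractive exponents on $E_1$, $E_2$, and the truncated fourth moment simultaneously so that the final exponent of $p$ is exactly $q$ and the logarithm appears precisely as $|\log\mcl{M}(f)|^{2q-2}$ inside $\Delta$; a secondary technical point is to verify the identity $\|E_1\|_2^2\leq C_q|\E W^4-3|$ directly in the discrete setting, by expanding $(\partial_i W)^2$ and identifying the contributions as squared norms of the contractions $f\wt\otimes_r f$.
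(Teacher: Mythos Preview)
Your overall architecture matches the paper: the same exchangeable pair, the same linearity condition with $\lambda=q/n$ and $R=0$, the application of \cref{c1}, and the same split of $E$ into $E_1=\frac1q\sum_i\{(\partial_iW)^2-\E(\partial_iW)^2\}$ and $E_2=\frac1{2q}\sum_i(X_i^2-1)(\partial_iW)^2$. Your treatment of $E_1$ is essentially the paper's, modulo the minor correction that $\|E_1\|_2^2\leq C_q(|\E W^4-3|+M^q\mcl M(f))$ rather than $C_q|\E W^4-3|$ alone (this is the content of Lemma 2.1 in \cite{Ko22}).

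Where the proposal diverges is in the two fixes you sketch, and both have genuine gaps.

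\textbf{The $E_2$ term.} Your observation that plain hypercontractivity on the degree-$2q$ polynomial $E_2$ gives $\sqrt p\|E_2\|_p\leq C_q(pK^2)^{q+1/2}\sqrt{\mcl M(f)}$ is correct, and this is one half-power of $p$ too many: under $p\leq\mcl M(f)^{-1/2}$ it only yields $p^q\mcl M(f)^{1/4}$, not $p^q\sqrt{\mcl M(f)}$. Your proposed remedy, ``conditioning on the $(\partial_iW)^2$'s'', does not work as stated: while $(\partial_iW)^2$ is independent of $X_i^2-1$, it \emph{does} depend on $X_j$ for $j\neq i$, so conditioning on the whole collection $\{(\partial_jW)^2:j\in[n]\}$ destroys the independence of the $X_i^2-1$. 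There is no filtration in which $E_2$ becomes a martingale or a sum of conditionally independent terms, so a direct Hanson--Wright/Bernstein argument is not available. The paper recovers the missing half-power by a different route: it applies \cref{lem:aw-est} (Adamczak--Wolff) and the lower bound in \cref{lem:latala} (Lata{\l}a) to show $\|E\|_p\leq C_qK^{2q}\|Q(G)-1\|_p$ with $G\sim N(0,I_n)$ Gaussian, then expands the Gaussian analogue $H_{11}=\frac q2\sum_i(G_i^2-1)I_{q-1}(\mathbf f_i)^2$ into multiple Wiener--It\^o integrals via the product formula, and bounds the resulting mixed injective norms directly. The key calculation is that the fully split injective norm of $\sum_ie_i^{\otimes2}\wt\otimes(\mathbf f_i\wt\otimes_r\mathbf f_i)$ is bounded by $\mcl M(f)$ (not $\sqrt{\mcl M(f)}$), which is precisely what saves the half-power. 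This Gaussian-transfer step is not bookkeeping; it is the substantive idea.

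\textbf{The fourth-moment term.} Your idea of using the truncated bound \eqref{abst-est} and integrating in $t$ to manufacture the $|\log\mcl M(f)|^{2q-2}$ factor is plausible in spirit but not how the paper proceeds, and the details are delicate because $D=q(X_I'-X_I)Q_I(X)$ involves the unbounded factor $X_I'-X_I$, so the event $\{|D|>\eta_t(p)\}$ is not simply a threshold on $|Q_I|$; bounding $\|R_t\|_p$ then becomes awkward. The paper instead truncates $Q_i(X)$ directly at the deterministic level $\delta_i=C_q'K^{q-1}\sqrt{\Inf_i(f)}\,|pq\log\mcl M(f)|^{(q-1)/2}$, uses the sub-Weibull tail of $Q_i(X)$ (from \cref{lem:aw-est}) to make $P(|Q_i(X)|>\delta_i)\leq\mcl M(f)^{pq}$, and bounds $\sum_i\delta_i^4$ explicitly. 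The logarithmic factor enters through the choice of $\delta_i$, not through a $t$-integral.
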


The proof of \cref{prop:homo} is deferred to \cref{appendix:homo}. 
\begin{proof}[Proof of \cref{thm:dejong}]
We first note that $\mcl M(f)\leq\sum_{i_1,\dots,i_q=1}^nf(i_1,\dots,i_q)^2=1/q!\leq1/2$. 
We apply \cref{t4} with $r_0=1$, $\alpha_1=q$, $\Delta_1=\Delta$ and $p_0=\mcl M(f)^{-1/2}$. Then, it remains to check $|\log\Delta|\leq p_0/2$ and $\sqrt p_0\geq\Delta^{-1/(2q+1)}$. Since $M\geq(\E X_1^2)^2=1$, we have $\Delta\geq\sqrt{\mcl M(f)}$. This and the assumption $\Delta<1$ give the desired result. 
\end{proof}



\section{Moderate deviations in multi-dimensions}\label{sec:multi}

In this section, we study moderate deviations in multi-dimensions.
We first apply \cref{t1} to obtain a $p$-Wasserstein bound for multivariate normal approximation of sums of independent random vectors. All the proofs for the results in this section are deferred to \cref{appendix:multi}.

\begin{theorem}\label{thm:multip}
Let $W=n^{-1/2}\sum_{i=1}^n X_i\in \mathbb{R}^d$, where $\{X_1,\dots, X_n\}$ are independent, $\E(X_i)=0$ for all $i$, and $\Var(W)=I_d$.
Suppose $\norm{X_i}_{\psi_1}\leq b$ for all $1\leq i\leq n$. Let $Z\sim N(0, I_d)$.
Then, for any $p\geq 2$, we have
\ben{\label{028}
\mcl{W}_p(W, Z)\leq C (\frac{p d^{1/4}}{\sqrt{n}}+\frac{p^{5/2}}{n})b^2.
}
\end{theorem}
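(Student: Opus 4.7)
The plan is to apply \cref{t1} to a standard exchangeable pair, adapting the scalar calculation in \cref{subsec:indep} so as to extract the $d^{1/4}$ factor from the multi-dimensional min-structure in \eqref{abst-est}. To construct $(W,W')$, let $I$ be uniform on $\{1,\ldots,n\}$ and independent of an independent copy $(X_1',\ldots,X_n')$ of $(X_1,\ldots,X_n)$, and set $W'=W-X_I/\sqrt n+X_I'/\sqrt n$, $D=W'-W=(X_I'-X_I)/\sqrt n$, and $\mcl G=\sigma(X_1,\ldots,X_n)$. A short computation gives $\E[D|\mcl G]=-W/n$, so the approximate linearity condition in \cref{t1} holds with $R=0$ and $\Lambda=\lambda I_d$, $\lambda=1/n$, and the bound \eqref{abst-est} applies.

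Setting $Y_i=(X_i'-X_i)/\sqrt n$, conditional Jensen gives $\|R_t\|_p\leq\|\sum_iY_i1_{\{|Y_i|>\eta_t(p)\}}\|_p$; since $Y_i$ has a symmetric distribution the summands are centered, and \cref{lem:weibull} applies with $\alpha=1/2$ and $\|Y_i1_{\{|Y_i|>\eta\}}\|_{\psi_{1/2}}\leq C\eta^{-1}b^2/n$. A direct calculation also yields $E=\frac{1}{2n}\sum_{i=1}^n(X_i^{\otimes 2}-\E X_i^{\otimes 2})$; viewing $X_i^{\otimes 2}$ as an element of $\mathbb R^{d^2}$ with Euclidean norm $|X_i|^2$, one has $\|X_i^{\otimes 2}\|_{\psi_{1/2}}\leq Cb^2$, and \cref{lem:weibull} again applies. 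The same elementary integral estimates as in \cref{subsec:indep} then produce $\int_0^\infty e^{-t}\|R_t\|_p\,dt\leq Cb^2(p/\sqrt n+p^{5/2}/n)$ and $\sqrt p\|E\|_p\leq Cb^2(p/\sqrt n+p^{5/2}/n)$.

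The main obstacle is the third term in \eqref{abst-est}, since a direct application of the cruder bound \eqref{027} would produce a spurious $p^3 d^{1/4}b^2/n$ not absorbed by the target rate. Mirroring the scalar decomposition at the end of \cref{subsec:indep}, I would write the relevant quantity as $\|\sum_iF_i\|_p$ with $F_i:=\E[Y_i^{\otimes 2}|Y_i|^2 1_{\{|Y_i|\leq\eta\}}|X_i]$, and split into a mean and a fluctuation part. For the mean part, $\sum_i\E F_i$ is a deterministic positive semidefinite matrix, so $\|\sum_i\E F_i\|_{HS}\leq\tr\sum_i\E F_i\leq\sum_{i=1}^n\E|Y_i|^4\leq Cb^4/n$; crucially, the \emph{un}truncated fourth moment is used here. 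For the fluctuation part, $|Y_i|^4 1_{\{|Y_i|\leq\eta\}}\leq\eta^2|Y_i|^2$ together with $\E[|Y_i|^2|X_i]=(|X_i|^2+\E|X_i|^2)/n$ gives $\|F_i\|_{\psi_{1/2}}\leq C\eta^2b^2/n$, and \cref{lem:weibull} yields $\|\sum_i(F_i-\E F_i)\|_p\leq C\eta^2b^2(\sqrt{p/n}+p^2/n)$.

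Plugging these into the elementary inequality $\min\{A,B+C\}\leq\min\{A,B\}+C$, the third-term integrand is bounded by $\min\{\sqrt d/\eta_t(p),\,Cb^4/(n\eta_t^3(p))\}+Cb^2(\sqrt{p/n}+p^2/n)/\eta_t(p)$. The additive summand integrates against $e^{-t}$ to $Cb^2(p/\sqrt n+p^{5/2}/n)$ via $\int_0^\infty e^{-t}/\sqrt{e^{2t}-1}\,dt<\infty$; the min summand, which supplies the $d^{1/4}$, is evaluated by splitting at the crossover $\eta_t^2=Cb^4/(n\sqrt d)$ and reducing each half to an elementary integral in $u=e^{2t}-1$, with both halves contributing at order $Cpd^{1/4}b^2/\sqrt n$. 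Summing the three contributions and absorbing the excess $p/\sqrt n$ into $pd^{1/4}/\sqrt n$ (legitimate since $\Var(W)=I_d$ forces $d\geq 1$) gives the asserted bound $C(pd^{1/4}/\sqrt n+p^{5/2}/n)b^2$.
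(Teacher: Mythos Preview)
Your proposal is correct and follows essentially the same approach as the paper's own proof, which simply says to repeat the scalar argument of \cref{subsec:indep} while viewing $Y_i^{\otimes2}$ as a $d^2$-vector and applying \cref{lem:weibull} in $\mathbb R^{d^2}$, with the $d^{1/4}$ coming from the $\sqrt d$ in \eqref{abst-est}. Your write-up is in fact more detailed than the paper's: the two cosmetic differences---using the exact identity $E=\frac{1}{2n}\sum_i(X_i^{\otimes2}-\E X_i^{\otimes2})$ rather than the looser bound via $Y_i^{\otimes2}$, and conditioning on $X_i$ to form $F_i$ before splitting mean and fluctuation rather than working with the unconditioned $Y_i^{\otimes2}|Y_i|^21_{\{|Y_i|\leq\eta\}}$---are immaterial and lead to the same estimates.
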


We can use $p$-Wasserstein bounds to obtain moderate deviation results in the multi-dimensional setting. In the following theorem, 
we provide an analogous result as \cref{t4} for $|P(|W|>x)/P(|Z|>x)-1|$. For simplicity, we only state a result corresponding to $r_0=1$ in \cref{t4}, which suffices for the applications we consider.
We remark that our approach can be used to obtain upper bounds on $|P(W\notin A)/P(Z\notin A)-1|$ for more general convex sets $A\subset \mathbb{R}^d$ as long as we have a suitable control on $P(Z\in A^\eps\backslash A^{-\eps})/P(Z\notin A$) for small $\eps>0$, where $A^\eps\backslash A^{-\eps}$ contains all $x\in \mathbb{R}^d$ within distance $\eps$ away from the boundary of $A$.

\begin{theorem}\label{thm:multiMD}
Let $W$ be a $d$-dimensional random vector, $d\geq 2$, and $Z\sim N(0,I_d)$. Suppose
\be{
\mcl{W}_p(W, Z)\leq A p^\alpha \Delta \ \text{for}\ 1\leq p\leq  p_0
}
with some constants $\alpha\geq 0$, $A>0$, $\Delta>0$, $|\log \Delta|\leq p_0/4$ and $\log(\kappa(d))\leq p_0/4$ with $\kappa(d):=2^{(d/2)-1}\Gamma(d/2)$. 
Suppose further that
\ben{\label{029}
d(d\log d)^\alpha \Delta\leq B_1
}
and
\ben{\label{030}
d \Delta |\log \Delta|^\alpha\leq B_2,\ \text{if}\ 0<\alpha\leq 1/2.
}
Then there exists a positive constant $C_{A, \alpha, B_1, B_2}$ depending only on $\alpha$, $A$, $B_1$ and $B_2$ such that 
\ben{\label{036}
\left|\frac{P(|W|>x)}{P(|Z|>x)}-1\right|\leq C_{A, \alpha, B_1, B_2}(1+x)(|\log\Delta|+d \log d+x^2)^{\alpha}\Delta
}
for all $0\leq x\leq \min\{\Delta^{-1/(2\alpha+1)}, \sqrt{p_0}\}$.
\end{theorem}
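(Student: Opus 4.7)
The strategy mimics the proof of \cref{t4} (the one-dimensional version), with two new multi-dimensional ingredients tailored to the chi distribution of $|Z|$: a Mills-ratio-type lower bound on $P(|Z|>x)$, and a uniform bound on the chi density $f(y)=y^{d-1}e^{-y^2/2}/\kappa(d)$.

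First, I would take a coupling $(W,Z)$ such that $\|W-Z\|_p=\mcl W_p(W,Z)\leq A p^\alpha \Delta$ and choose
\[
p:=|\log\Delta|+\log\kappa(d)+x^2/2, \qquad \eps:=eAp^\alpha\Delta.
\]
The hypotheses $|\log\Delta|\leq p_0/4$, $\log\kappa(d)\leq p_0/4$, and $x\leq\sqrt{p_0}$ guarantee $p\leq p_0$, so the assumed $p$-Wasserstein estimate applies. Markov's inequality yields $P(|W-Z|>\eps)\leq e^{-p}=\Delta\,\kappa(d)^{-1}e^{-x^2/2}$, and the usual smoothing argument gives
\[
\bigl|P(|W|>x)-P(|Z|>x)\bigr|\leq P(x-\eps<|Z|\leq x+\eps)+e^{-p}.
\]

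I would then estimate the ratio of the right-hand side to $P(|Z|>x)$ by splitting $x$ relative to the mode $\sqrt{d-1}$ of $f$. In the low range $x\leq\sqrt{d-1}$, the inequality $P(|Z|>x)\geq P(|Z|>\sqrt{d-1})\geq c_0>0$ (uniform in $d$) combined with $\sup_y f(y)=f(\sqrt{d-1})$ being bounded uniformly by Stirling's formula gives a shell probability $\leq 2\eps\sup_y f(y)\leq C\eps$, and the coupling-error ratio $e^{-p}/P(|Z|>x)\leq 2\Delta/\kappa(d)$ is negligible. Since $1+x\asymp 1$ here, this matches the target bound. For $x>\sqrt{d-1}$, integration by parts in $\int_x^\infty y^{d-1}e^{-y^2/2}\,dy$ gives the Mills-type lower bound $P(|Z|>x)\geq x^{d-2}e^{-x^2/2}/\kappa(d)$. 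Using the monotonicity of $f$ beyond its mode, the shell is bounded by $2\eps f(x-\eps)$ (the corner cases $x<\eps$ and $\sqrt{d-1}\in[x-\eps,x+\eps]$ are handled via the crude density bound $P(|Z|\leq r)\leq r^d/\kappa(d)$), and elementary manipulation yields
\[
\frac{P(x-\eps\leq|Z|\leq x+\eps)}{P(|Z|>x)}\leq Cx\eps\,e^{x\eps}.
\]

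The crux is that $x\eps\leq C_{A,\alpha,B_1,B_2}$ uniformly over the admissible range of $x$, which reduces to a case analysis on which of $|\log\Delta|$, $d\log d$, or $x^2$ dominates $p$: when $x^2\geq|\log\Delta|+d\log d$ one uses $x^{2\alpha+1}\Delta\leq 1$ from $x\leq\Delta^{-1/(2\alpha+1)}$; when the $d\log d$ term dominates one obtains $x\eps\leq C(d\log d)^{\alpha+1/2}\Delta\leq CB_1\sqrt{\log d}/\sqrt d$ by invoking $B_1$; and when the $|\log\Delta|$ term dominates one obtains $x\eps\leq C|\log\Delta|^{\alpha+1/2}\Delta$, which is uniformly bounded by an elementary one-variable calculus bound (and by $B_2$ when $\alpha\leq 1/2$). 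A parallel check shows $e^{-p}/P(|Z|>x)\leq\Delta x^{-(d-2)}\leq\Delta$ for $x\geq 1$, with the regime $x<1$ absorbed in the low-$x$ analysis. Assembling these pieces yields $(1+x)(|\log\Delta|+d\log d+x^2)^\alpha\Delta$ up to a constant depending only on $A,\alpha,B_1,B_2$.

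The main obstacle is the multi-dimensional Mills ratio analysis and the above case-by-case verification that $x\eps$ remains bounded; this is precisely where the technical conditions $B_1$ and $B_2$ enter, and where the extra $d\log d$ term in the error (absent in dimension one) originates, via $\log\kappa(d)\asymp (d/2)\log d$ in the choice of $p$. Minor but non-trivial care is also needed at the transition $x\approx\sqrt{d-1}$ (ensuring $x-\eps$ lies in the decreasing tail of $f$) and for $x<\eps$, both handled through the density bound $P(|Z|\leq r)\leq r^d/\kappa(d)$.
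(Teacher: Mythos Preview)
Your proposal is correct and shares the paper's overall scheme: the same choice of $p$ and $\eps$, Markov's inequality on the optimal coupling, and the integration-by-parts lower bound $P(|Z|>x)\geq x^{d-2}e^{-x^2/2}/\kappa(d)$.

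The execution of the shell-to-tail ratio differs. The paper splits at $x=1$ and bounds the shell crudely by $2\eps\max_{[x-\eps,x+\eps]}f\leq 2\eps f(x)e^{x\eps}(1+\eps/x)^{d-1}$, so after dividing by the tail the exponential factor is $e^{d\eps/x}$ and the key quantity to control is $d\eps/x$; this is precisely where both \eqref{029} and \eqref{030} are invoked. You instead split at the mode $\sqrt{d-1}$ and, using monotonicity of $f$ past its mode, get $f(x-\eps)\leq f(x)e^{x\eps}$ with no $(1+\eps/x)^{d-1}$ factor, so you only need $x\eps$ bounded. Your three-case analysis for $x\eps$ is sound: the $x^2$-dominant case uses $x\leq\Delta^{-1/(2\alpha+1)}$, the $d\log d$-dominant case uses \eqref{029}, and the $|\log\Delta|$-dominant case follows from $\sup_{0<\Delta<1}|\log\Delta|^{\alpha+1/2}\Delta<\infty$. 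In fact your route does not really need \eqref{030}; your parenthetical appeal to $B_2$ there is misplaced (condition \eqref{030} bounds $d\Delta|\log\Delta|^\alpha$, not $|\log\Delta|^{\alpha+1/2}\Delta$), but harmless since the calculus bound already suffices. One minor slip: ``$1+x\asymp1$'' in the low range is false when $d$ is large; what you actually use there is just $1+x\geq1$, which is enough.
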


The following Cram\'er-type moderate deviation result for sums of independent random vectors is an easy consequence of \cref{thm:multip}, \cref{thm:multiMD} with $\alpha=1$, $p_0=\Delta^{-2/3}$ and the fact that $d=\E|W|^2\leq C b^2$.

\begin{theorem}\label{thm:chi}
Under the setting of \cref{thm:multip} with $d\geq 2$, 
let
\be{
\Delta:=\frac{d^{1/4} b^2}{\sqrt{n}}.
}
Then there exist positive absolute constants $c$ and $C$ such that, for
\be{
d^2 (\log d) \Delta\leq c,\quad 0\leq x\leq \Delta^{-1/3},
}
we have
\be{
\left|  \frac{P(|W|>x)}{P(|Z|>x)}-1    \right|\leq C(1+x)(d \log d+|\log \Delta|+x^2)\Delta.
}
\end{theorem}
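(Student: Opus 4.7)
The plan is to combine \cref{thm:multip} (the $p$-Wasserstein bound for sums of independent vectors) with \cref{thm:multiMD} (the transfer from $p$-Wasserstein bounds to multivariate moderate deviations), taken with $\alpha=1$, $\Delta=d^{1/4}b^2/\sqrt n$, and $p_0:=\Delta^{-2/3}$, as the author indicates in the sentence preceding the statement.

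First I would invoke \cref{thm:multip} to obtain, for every $p\geq 2$,
\[
\mcl{W}_p(W,Z)\leq C\left(\frac{pd^{1/4}}{\sqrt n}+\frac{p^{5/2}}{n}\right)b^2
= Cp\Delta+\frac{Cp^{5/2}b^2}{n}.
\]
The key reduction is to absorb the second term into $Cp\Delta$ on the whole range $2\leq p\leq p_0$. Elementary algebra turns this into the inequality $p\leq c\,d^{1/6}n^{1/3}$, which at $p=p_0$ becomes $db^4\gtrsim 1$. This is essentially free: since $\Var(W)=I_d$, we have $d=\E|W|^2\leq Cb^2$, hence $b^2\geq d/C\geq 2/C$ and therefore $db^4\geq c\,d^3\geq c$. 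This yields $\mcl{W}_p(W,Z)\leq Cp\Delta$ on $[2,p_0]$, and monotonicity of $\mcl{W}_p$ in $p$ extends the same bound (up to an absolute constant) to $[1,p_0]$.

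Next I would apply \cref{thm:multiMD} with $A$ an absolute constant, $\alpha=1$, $\Delta$ as above, and $p_0=\Delta^{-2/3}$. Because $\alpha=1>1/2$, condition \eqref{030} is vacuous. Condition \eqref{029} reads $d(d\log d)\Delta\leq B_1$, which is precisely the standing hypothesis $d^2(\log d)\Delta\leq c$. The two remaining side conditions, $|\log\Delta|\leq p_0/4$ and $\log\kappa(d)\leq p_0/4$, are also ensured by taking $c$ small: the first because $p_0=\Delta^{-2/3}$ dominates any logarithm of $\Delta$, and the second because Stirling's formula gives $\log\kappa(d)=O(d\log d)$, while $d^2(\log d)\Delta\leq c$ combined with $d\geq 2$ forces $d\log d\ll \Delta^{-2/3}=p_0$. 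The admissible $x$-range in \cref{thm:multiMD} is $0\leq x\leq\min\{\Delta^{-1/(2\alpha+1)},\sqrt{p_0}\}=\Delta^{-1/3}$, matching the claim, and the output bound is exactly $C(1+x)(|\log\Delta|+d\log d+x^2)\Delta$.

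The only step that is not a pure invocation is the absorption of $Cp^{5/2}b^2/n$ into $Cp\Delta$ throughout $[1,p_0]$, and this is essentially automatic from the normalization $\Var(W)=I_d$ forcing $b^2\gtrsim d$. I therefore do not expect a genuine obstacle; the substantive work has already been done in \cref{thm:multip} and \cref{thm:multiMD}, and the proof is a matter of parameter matching and verification of the side conditions above.
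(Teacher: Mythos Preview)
Your proposal is correct and follows exactly the approach the paper indicates: invoke \cref{thm:multip}, use $d=\E|W|^2\leq Cb^2$ to absorb the $p^{5/2}b^2/n$ term into $Cp\Delta$ on $[1,p_0]$ with $p_0=\Delta^{-2/3}$, and then apply \cref{thm:multiMD} with $\alpha=1$, checking the side conditions via $d^2(\log d)\Delta\leq c$. The paper itself gives no proof beyond the one-line remark preceding the statement, and your verification of the side conditions (\eqref{029}, $|\log\Delta|\leq p_0/4$, $\log\kappa(d)\leq p_0/4$) fills in precisely the details the paper leaves implicit.
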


\begin{remark}
The result in \cref{thm:chi} recovers the optimal range $0\leq x=o(n^{1/6})$ (cf. \cite{Vo67}) for the relative error to vanish.
Although it is known that the error rate can be improved because of the symmetry of Euclidean balls, see, for example, \cite{Vo67} and \cite{FaLiSh21}, their proofs depend on the conjugate method, which relies heavily on the independence assumption.
Our approach works for the dependent case (cf. \cref{thm:local1,thm:local2}).
\end{remark}

\section{Local dependence}\label{sec:local}

A large class of random vectors that can be approximated by a normal distribution exhibits a local dependence structure. Roughly speaking, we assume that the random vector $W$ is a sum of a large number of random vectors $\{X_i\}_{i=1}^n$ and that each $X_i$ is independent of $\{X_j: j\notin A_i\}$ for a relatively small index set $A_i$. Variations of such local dependence structure and normal approximation results with absolute error bounds can be found in, e.g., \cite{BaRi89}, \cite{BaKaRu89} and \cite{ChSh04}. Moderate deviation results (relative error bounds) under local dependence were recently obtained by \cite{LZ21} in dimension one. See \cref{rem:LZ21} for a comparison. 

Throughout this section, we assume $n\geq2$. 

\subsection{Bounded case}

We first provide a $p$-Wasserstein bound for multivariate normal approximation of sums of locally dependent, bounded random vectors.

\begin{theorem}\label{thm:local3}
Let $W=n^{-1/2}\sum_{i=1}^n X_i\in \mathbb{R}^d$ with $\E(X_i)=0$ for all $i$ and $\Var(W)=I_d$. We assume that for each $i$, there is a neighborhood $A_i\subset \{1,\dots, n\}$ such that $X_i$ is independent of $\{X_j: j\notin A_i\}$. Assume further that for each $i$ and $j\in A_i$, there exists a second neighborhood $A_{ij}$ such that $\{X_i, X_j\}$ is independent of $\{X_k: k\notin A_{ij}\}$. 
Let 
\be{
B_{ij}:=\{(k,l): k\in \{1,\dots, n\}, l\in A_k, k\ \text{or}\ l\in A_{ij}\}.
}
Suppose
\be{
|X_i|\leq b_n,\  |X_{ij}|\leq b_n', \  |A_i|\leq \theta_1,\   |B_{ij}|\leq \theta_2,
}
where $X_{ij}$ denotes the $j$th component of $X_i$ and $|\cdot|$ denotes the cardinality when applied to a set. 
Then there exist positive absolute constants $c$ and $C$ such that, for
\ben{\label{037}
2\leq p\leq \min\{\frac{\theta_1}{\theta_2}, \frac{c}{\theta_1^2 b_n^2}\} n
}
we have, with $Z\sim N(0, I_d)$,
\ben{\label{025}
\mcl{W}_p(W, Z)\leq Cp \left(\frac{ d  (\theta_1 \theta_2)^{1/2} b_n'^2+\theta_1^2 b_n^3\log n}{\sqrt{n}}\right).
}
\end{theorem}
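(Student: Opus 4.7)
The plan is to follow the semigroup-based strategy of \cref{t1} and \cref{t3}, adapted to local dependence. One represents the difference $\E[f(W)]-\E[f(Z)]$ as an integral along the multivariate OU interpolation $W_t:=e^{-t}W+\sqrt{1-e^{-2t}}Z'$ with $Z'\sim N(0,I_d)$ independent of $W$, and converts this into a $p$-Wasserstein bound via a Bobkov-style $L^p$ transport argument. The exchangeable-pair identity \eqref{1} is replaced by the Stein identity coming from the local independence $X_i\indep\{X_j:j\notin A_i\}$: for any smooth $g$,
\[
\E[X_i\cdot g(W_t)] = \E[X_i\cdot(g(W_t)-g(W_t^{(i)}))],
\]
where $W_t^{(i)}$ is obtained by replacing $W$ with $W^{(i)}:=W-n^{-1/2}\sum_{j\in A_i}X_j$ in the definition of $W_t$.

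Taylor-expanding $g=\nabla f$ around $W_t^{(i)}$ produces, as in the exchangeable-pair case, a second-order ``covariance'' term
\[
H := \frac{1}{n}\sum_{i=1}^n\sum_{j\in A_i} X_i\otimes X_j,
\]
whose expectation equals $I_d$ because $\Var(W)=I_d$, together with a third-order remainder involving triple products $X_i\otimes X_j\otimes X_k$ with $j\in A_i$ and $k$ ranging over the appropriate neighborhood of $\{i,j\}$. The $p$-Wasserstein bound is then decomposed into $\|H-I_d\|_p$ plus the $L^p$ norm of the truncated triple-product remainder, tracked through the semigroup integral exactly as in \eqref{abst-est}.

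To estimate $\|H-I_d\|_p$, observe that a pair $(i,j)$ with $j\in A_i$ interacts only with pairs $(k,l)$ with $k$ or $l$ in $A_{ij}$, so the dependency graph on the index set $\{(i,j):j\in A_i\}$ has maximum degree controlled by $\theta_2$ via $|B_{ij}|\leq\theta_2$, while the total number of pairs is at most $n\theta_1$. A locally dependent analogue of \cref{lem:weibull} (obtained by partitioning the index set into $\theta_2$ independent subsets or by a direct Bernstein estimate on dependency graphs) then yields an entry-wise $L^p$ bound of order $b_n'^2\sqrt{pn\theta_1\theta_2}/n$, and collecting entries via the Hilbert--Schmidt norm gives $\|H-I_d\|_p\leq Cd\, b_n'^2\sqrt{p\theta_1\theta_2/n}$. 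Propagating this through the semigroup integral (with the same $\sqrt{p}$-to-$p$ passage used in going from \eqref{abst-est} to \eqref{027}) yields the first term $pd(\theta_1\theta_2)^{1/2}b_n'^2/\sqrt n$, in exact analogy with the role of $\|E\|_p$ in \cref{t1}.

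The triple-product remainder contains $O(n\theta_1^2)$ summands of individual size $b_n^3/n^{3/2}$; the deterministic bound is weaker than what we need by a factor $p$, so as in \cref{t1} one truncates the contributing increments at the semigroup-dependent level $\eta_t(p)$ and applies the same locally dependent moment inequality to the centered truncated sums, reproducing the ``min'' structure inside \eqref{abst-est}. Integrating over $t\in[0,\infty)$ and splitting the integral at a threshold that balances the truncated contribution against the tail produces the logarithmic factor, giving the second term $p\,\theta_1^2 b_n^3\log n/\sqrt n$, exactly as in the derivation of \eqref{034}. The main obstacle is establishing the right concentration inequality for locally dependent sums so that the dependence on $p$ is linear and $\theta_2$ enters only through its square root; this requires a sharp dependency-graph Bernstein-type estimate combined with the truncation/semigroup balance, and the hypotheses on $p$ in \eqref{037} are precisely what is needed to ensure that the variance term dominates the truncation tail in this balance and that the semigroup parameter $\eta_t(p)$ lies in the regime where the truncated summands are effectively centered.
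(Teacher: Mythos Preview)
Your overall strategy---OU interpolation, the local Stein identity $\E[X_i\,g(W_t)]=\E[X_i(g(W_t)-g(W_t^{(i)}))]$, decomposition into the covariance term $H-I_d$ plus a higher-order remainder, and integration in $t$---is exactly the paper's route. For $\|H-I_d\|_p$ the paper does not partition into independent blocks but instead runs a direct MGF argument: writing $E_{uv}=2(\theta_1\theta_2)^{1/2}b_n'^2n^{-1/2}V_{uv}$ with $V_{uv}$ a centered locally dependent sum, one shows $h'(t)\leq C|t|h(t)$ for $h(t)=\E e^{tV_{uv}}$ by replacing $V_{uv}$ with $V_{uv}^{(ij)}$ (which differs by at most $\sqrt{\theta_2/(\theta_1 n)}$), hence sub-gamma tails and $\|V_{uv}\|_p\leq C\sqrt p$ for $p\leq n\theta_1/\theta_2$. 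Your partitioning alternative would also give the right order, so this part is fine.

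The description of the third-order remainder is where the mechanism is off. The ``min'' structure in \eqref{abst-est} depends on the exchangeable symmetry trick \eqref{043}, which converts the $k=2$ contribution into $k\geq3$ terms; without exchangeability that trick is unavailable (this is precisely the content of the Remark following \cref{thm:local3}). In the local-dependence proof the $k=2$ term survives and contributes $Ce^{-t}p\,\theta_1^2b_n^3/\bigl(\sqrt n\,(e^{2t}-1)\bigr)$ to the score bound, using only the deterministic estimate $|(-\sqrt n X_I)\otimes D^{\otimes2}|\leq\theta_1^2b_n^3/\sqrt n$. There is no truncation or concentration gain here---$|D|\leq\theta_1 b_n/\sqrt n$ is already deterministic---so the bound is not ``weaker by a factor $p$'' but is exactly what is used. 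The $\log n$ arises because $\int e^{-t}/(e^{2t}-1)\,dt$ diverges at $t=0$: one splits at $t_0$ defined by $e^{2t_0}-1=p\,\theta_1^2b_n^2/n$ (this is where the second half of \eqref{037} enters, ensuring $t_0\leq c$ and that the $k\geq3$ series converges for $t\geq t_0$), giving $\int_{t_0}^\infty e^{-t}/(e^{2t}-1)\,dt\asymp\log(1/t_0)\asymp\log n$. The region $t<t_0$ is handled not by truncation but by the triangle inequality
\[
\mcl W_p(W,Z)\leq e^{t_0}\bigl\{\mcl W_p(e^{-t_0}W,F_{t_0})+\mcl W_p(F_{t_0},Z)+\mcl W_p(Z,e^{-t_0}Z)\bigr\},
\]
together with $\|\sqrt{1-e^{-2t_0}}\,Z\|_p\leq C\sqrt{pd}\,\theta_1 b_n\sqrt{p/n}$ and $d\leq\theta_1 b_n^2$.
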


\begin{remark}
We will adapt the proof of \cref{t1} to prove \cref{thm:local3} in \cref{appendix:local}. 
Without exchangeability, we can not use the symmetry trick in \eq{043}. Therefore, because of the integrability issue of $1/(e^{2t}-1)$ for $t$ near 0, we get an additional logarithmic term in \eq{025} (cf. \cref{sec:proofthm2.1}).
\end{remark}

Using \cref{thm:local3} together with \cref{t4,thm:multiMD}, we obtain the following moderate deviation result for sums of locally dependent, bounded random vectors.

\begin{theorem}\label{thm:local1}
Under the same condition as in \cref{thm:local3}, 
for $d=1$, there exist positive absolute constants $c$ and $C$ such that, if
\be{
\Delta_1:=\frac{(\theta_1 \theta_2)^{1/2}b_n'^2+\theta_1^2 b_n^3 \log n}{\sqrt{n}}\leq c, 
}
then, for $0\leq x\leq \Delta_1^{-1/3}$,
\be{
\left|\frac{P(W>x)}{P(Z>x)}-1    \right|\leq C(1+x)(1+|\log \Delta_1|+x^2)\Delta_1.
}
For $d\geq 2$, let
\be{
\Delta_d:=\frac{d(\theta_1 \theta_2)^{1/2}b_n'^2+\theta_1^2 b_n^3 \log n}{\sqrt{n}}.
}
Then, there exist a positive absolute constants $c$ and $C$ such that, for
$d^2(\log d)\Delta_d\leq c$ and $0\leq x\leq \Delta_d^{-1/3}$, we have
\be{
\left|\frac{P(|W|>x)}{P(|Z|>x)}-1    \right|\leq C(1+x)(|\log \Delta_d|+d\log d+x^2)\Delta_d.
}
\end{theorem}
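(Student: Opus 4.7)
The plan is to derive the theorem directly from the $p$-Wasserstein bound of \cref{thm:local3} combined with the transition theorems \cref{t4} (for $d=1$) and \cref{thm:multiMD} (for $d\geq 2$). Nothing new is required; the work lies in bookkeeping the constants and verifying the auxiliary hypotheses of the transition theorems.

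First, for the one-dimensional case, I would specialize \cref{thm:local3} to $d=1$, obtaining $\mcl{W}_p(W,Z)\leq Cp\Delta_1$ for all $p$ in the range $2\leq p\leq p_0$, where
\be{
p_0:=\min\left\{\frac{\theta_1}{\theta_2},\,\frac{c}{\theta_1^2 b_n^2}\right\}n.
}
Then I would invoke \cref{t4} with $r_0=\alpha_1=1$, $A=C$, $\Delta_1$ as defined in the statement, and the above $p_0$. By the remark following \cref{t4}, the fact that the $p$-Wasserstein bound is only verified for $p\geq 2$ is not an issue. The output of \cref{t4} is precisely the one-dimensional assertion of \cref{thm:local1}.

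For the multi-dimensional case, \cref{thm:local3} gives $\mcl{W}_p(W,Z)\leq Cp\Delta_d$ for $2\leq p\leq p_0$, where the factor $d$ from the bound in \cref{thm:local3} is absorbed into the definition of $\Delta_d$. I would then apply \cref{thm:multiMD} with $\alpha=1$, $\Delta=\Delta_d$, and the same $p_0$. Since $\alpha=1>1/2$, the auxiliary condition \eqref{030} is vacuous, and the growth condition $d(d\log d)^\alpha\Delta\leq B_1$ of \cref{thm:multiMD} reduces exactly to the stated hypothesis $d^2(\log d)\Delta_d\leq c$.

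The hard part will be verifying that $p_0$ is large enough to validate the remaining auxiliary assumptions of the transition theorems, namely $\sqrt{p_0}\geq\Delta^{-1/3}$, $|\log\Delta|\leq p_0/2$ (respectively $p_0/4$), and, in the multi-dimensional case, $\log\kappa(d)\leq p_0/4$ with $\kappa(d)=2^{d/2-1}\Gamma(d/2)$. All of these will follow from the smallness hypothesis on $\Delta_1$ (resp.\ $\Delta_d$) by choosing the absolute constant $c$ in the hypothesis sufficiently small: the lower bound $\Delta\geq\theta_1^2b_n^3(\log n)/\sqrt{n}$ forces $\theta_1$ and $b_n$ to be small enough relative to $n$ that $p_0$ grows at least polynomially in $n$, while $\log\kappa(d)=O(d\log d)$ is controlled by the hypothesis $d^2(\log d)\Delta_d\leq c$ together with $\Delta_d\geq d/\sqrt{n}$ (which holds since $\Var(W)=I_d$ forces the individual summand variances to be at least of order $d/n$). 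These verifications are routine and I would carry them out by a straightforward case split on which term of $\Delta$ dominates.
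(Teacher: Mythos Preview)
Your approach is correct and is essentially the paper's proof: apply \cref{thm:local3} to get $\mcl W_p(W,Z)\leq Cp\Delta_d$, then feed this into \cref{t4} (for $d=1$) or \cref{thm:multiMD} with $\alpha=1$ (for $d\geq2$).

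The only difference is in the bookkeeping for the auxiliary hypotheses. Rather than taking $p_0$ to be the full range \eqref{037} and then arguing by a case split that $\sqrt{p_0}\geq\Delta^{-1/3}$, the paper sets $p_0=\Delta^{-2/3}$ directly and checks that this lies inside \eqref{037}. The check is immediate from two elementary inequalities you did not write down but which do all the work: from $\Var(W)=I_d$ and the boundedness/neighborhood assumptions one has
\[
d=\E|W|^2\leq\theta_1 b_n^2\quad\text{and}\quad 1=\Var(W_j)\leq\theta_1 b_n'^2.
\]
The first gives $\Delta_d\geq\theta_1^2 b_n^3/\sqrt n\geq\theta_1 b_n/\sqrt n$, hence $\Delta_d^{-2/3}\leq(n/(\theta_1^2 b_n^2))^{1/3}$; the second gives $\Delta_d\geq d(\theta_2/\theta_1)^{1/2}/\sqrt n$, hence $\Delta_d^{-2/3}\leq(\theta_1 n/\theta_2)^{1/3}$. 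Both cube-root quantities are trivially dominated by the corresponding terms in \eqref{037}. Your claimed inequality $\Delta_d\geq d/\sqrt n$ with its variance justification is in the right spirit but is not quite the right fact, and the ``case split on which term of $\Delta$ dominates'' is unnecessary once you have the two displayed inequalities. For the condition $\log\kappa(d)\leq p_0/4$ in the multivariate case, the hypothesis $d^2(\log d)\Delta_d\leq c$ gives $\Delta_d^{-2/3}\geq(d^2\log d/c)^{2/3}\gtrsim d\log d\geq C\log\kappa(d)$, so this is also routine.
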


\begin{proof}[Proof of \cref{thm:local1}]
Note that $d=\E (W^T W)\leq \theta_1 b_n^2$ and $1\leq \theta_1 b_n'^2$.
First consider the case $d=1$.
Let $p_0=\Delta_1^{-2/3}$.
If $\Delta_1$ is sufficently small, then
$|\log \Delta_1|\leq p_0/2$ and moreover, using $d\leq \theta_1 b_n^2$ and $1\leq \theta_1 b_n'^2$,
\be{
p_0=\Delta_1^{-2/3}\leq \min\{ (\frac{\theta_1 n}{ \theta_2})^{1/3} , (\frac{n}{\theta_1^2 b_n^2})^{1/3}  \},
}
which is bounded by the right-hand side of \eq{037}.
\cref{thm:local1} then follows from \cref{t4} with $r_0=\alpha_1=1$ and \cref{thm:local3}.
The case $d\geq 2$ follows by using \cref{thm:multiMD} with $\alpha=1$ instead of \cref{t4}.
\end{proof}

\subsection{Unbounded case}

Next, we consider the unbounded case. We will do truncation and use Bernstein's inequality to control the truncation error. For this purpose, we need to assume that the index set $\{1,\dots, n\}$ can be partitioned into $L$ groups $g_1, \dots, g_L$ such that for each group $g_l$, the summands $\{X_i: i\in g_l\}$ are independent. We give two examples below. 
The next theorem, whose proof is deferred to \cref{appendix:local}, provides a moderate deviation result under this setting.

\begin{theorem}\label{thm:local2}
Under the setting of \cref{thm:local1}, replace the boundedness conditions $|X_i|\leq b_n$ and $|X_{ij}|\leq b_n'$ by $\norm{X_{ij}}_{\psi_1}\leq b$. Assume in addition the above partition condition with $L$ groups.
Let 
\be{
\Delta_d:=\frac{d L  b \log n+d (\theta_1 \theta_2)^{1/2}b^2\log^2 n+d^{3/2}\theta_1^2 b^3 \log^4 n}{\sqrt{n}}.
}
For $d=1$, there exist positive absolute constants $c$ and $C$ such that, if
$\Delta_1\leq c$
and $0\leq x\leq \Delta_1^{-1/3}$, then
\ben{\label{026}
\left|\frac{P(W>x)}{P(Z>x)}-1    \right|\leq C(1+x)(1+|\log \Delta_1|+x^2)\Delta_1.
}
For $d\geq 2$, there exist a positive absolute constants $c$ and $C$ such that, if
$d^2(\log d)\Delta_d\leq c$
and $0\leq x\leq \Delta_d^{-1/3}$, then
\ben{\label{033}
\left|\frac{P(|W|>x)}{P(|Z|>x)}-1    \right|\leq C(1+x)(|\log \Delta_d|+d\log d+x^2)\Delta_d.
}

\end{theorem}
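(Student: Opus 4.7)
My strategy is to reduce the sub-exponential case to the bounded setting of \cref{thm:local3} via componentwise truncation, obtain a $p$-Wasserstein bound $\mcl{W}_p(W,Z)\leq Cp\Delta_d$ valid for $p$ up to $\Delta_d^{-2/3}$, and then invoke \cref{t4} (if $d=1$) or \cref{thm:multiMD} (if $d\geq 2$) with $\alpha=1$ to translate that bound into the stated moderate-deviation estimate. The partition hypothesis into $L$ independence groups is needed precisely to control the truncation error using a sub-exponential Bernstein-type bound via \cref{lem:weibull}.

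First I would truncate each coordinate at $T:=Cb\log n$ for a sufficiently large absolute constant $C$, setting $\hat X_{ij}:=X_{ij}\mathbf{1}\{|X_{ij}|\leq T\}$, $\tilde X_{ij}:=\hat X_{ij}-\E\hat X_{ij}$ and $\tilde W:=n^{-1/2}\sum_{i=1}^n\tilde X_i$. The truncated vectors inherit the local dependence structure with the same neighborhoods $A_i$ and $A_{ij}$, and $|\tilde X_{ij}|\leq 2T$. Sub-exponential tail bounds show that $\Var(\tilde W)-I_d$ has operator norm exponentially small in $\log n$, so after a cosmetic rescaling to restore covariance $I_d$ I incur only a negligible additional error. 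Applying \cref{thm:local3} with $b_n'\asymp b\log n$ and $b_n\asymp\sqrt d\,b\log n$ then yields
\be{
\mcl{W}_p(\tilde W,Z)\leq Cp\,\frac{d(\theta_1\theta_2)^{1/2}b^2\log^2 n+d^{3/2}\theta_1^2 b^3\log^4 n}{\sqrt n},
}
which captures the last two contributions to $\Delta_d$.

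Next I would control $\mcl{W}_p(W,\tilde W)\leq\|W-\tilde W\|_p$. The differences $Y_i:=X_i-\tilde X_i$ are centered, and within each group $g_l$ they are independent. Coordinate by coordinate, \cref{lem:weibull} with $\alpha=1$ gives $\|\sum_{i\in g_l}Y_{ij}\|_p\leq C(\sqrt{p\sum_{i\in g_l}\Var(Y_{ij})}+pb)$; the sub-exponential tail of $X_{ij}$ together with the choice $T=Cb\log n$ makes $\Var(Y_{ij})$ exponentially small in $\log n$, so the variance term is negligible and $\|\sum_{i\in g_l}Y_{ij}\|_p\leq Cpb$. Summing the per-group bounds by the triangle inequality across the $L$ groups and then lifting from coordinates to the Euclidean norm produces
\be{
\|W-\tilde W\|_p\leq C\,\frac{dLbp\log n}{\sqrt n},
}
which is the first term in $\Delta_d$.

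Adding the two pieces gives $\mcl{W}_p(W,Z)\leq Cp\Delta_d$ for $2\leq p\leq p_0$ with $p_0$ of order $\Delta_d^{-2/3}$. The smallness hypotheses ($\Delta_1\leq c$ when $d=1$, $d^2(\log d)\Delta_d\leq c$ when $d\geq 2$) let one check that $p_0$ lies inside the admissible range \eqref{037} of \cref{thm:local3} and that the side conditions $|\log\Delta_d|\leq p_0/2$ and $\log(\kappa(d))\leq p_0/4$ of \cref{thm:multiMD} are met. Invoking \cref{t4} with $r_0=\alpha_1=1$ then delivers \eqref{026} for $d=1$, while \cref{thm:multiMD} with $\alpha=1$ delivers \eqref{033} for $d\geq 2$. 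The main obstacle is the truncation-error step: a naive triangle-inequality summation of per-term $L^p$-norms across all $n$ indices would contribute a factor of $\sqrt n$ rather than the needed $L$, so it is essential to use the Bernstein argument within each group (which exploits the exponentially small post-truncation variance and pays the $pb$ tail-driven constant only $L$ times in total) to obtain the precise $dLb\log n/\sqrt n$ form of the leading contribution to $\Delta_d$.
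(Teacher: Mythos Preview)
Your plan is essentially the paper's proof: truncate componentwise at $T\asymp b\log n$, apply \cref{thm:local3} to the truncated vector with $b_n'\asymp b\log n$ and $b_n\asymp\sqrt d\,b\log n$, handle the covariance perturbation, bound the truncation remainder group-by-group using the partition into $L$ independence classes, and then feed the resulting $p$-Wasserstein bound into \cref{t4} or \cref{thm:multiMD}. The paper also splits the triangle inequality as $\mcl W_p(W,\tilde W)+\mcl W_p(\tilde W,\tilde Z)+\mcl W_p(\tilde Z,Z)$ with $\tilde Z\sim N(0,\Var(\tilde W))$ rather than rescaling, but this is cosmetic.

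One correction: \cref{lem:weibull} does \emph{not} yield the variance-based bound you quote. Its output is $C M(\sqrt{p\sum_i a_i^2}+p\max_i|a_i|)$ with $M=\max_i\|Y_{ij}\|_{\psi_1}$, and the $\psi_1$-norm of the truncation residual $Y_{ij}=X_{ij}1_{\{|X_{ij}|>T\}}-\E[\cdot]$ is still of order $b$, not small; applying \cref{lem:weibull} directly would give $\|n^{-1/2}\sum_{i\in g_l}Y_{ij}\|_p\leq Cb(\sqrt{p|g_l|/n}+p/\sqrt n)$, and summing over groups produces an $O(b\sqrt{pL})$ term that does not vanish. What you need is the classical Bernstein moment inequality (the paper uses \cite[Theorem~2.10]{BLM13} and then \cite[Theorem~2.3]{BLM13}), which exploits that $\E|X_{ij}|^p1_{\{|X_{ij}|>b\log n\}}\leq p!\,n^{-1}(b\log n+b)^p$ and hence gives per-group variance factor $v_0=O(b^2\log^2 n/n)$ and scale $c_0=O(b\log n/\sqrt n)$; this yields $\|W_j^{(l)}-\tilde W_j^{(l)}\|_p\leq C pb\log n/\sqrt n$ and, after summing over $l$ and $j$, the claimed $CpdLb\log n/\sqrt n$. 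With that substitution your argument matches the paper's.
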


\begin{example}
In $m$-dependence (cf. \cite{HoRo48}), 
it is assumed that $X_i$ is independent of $\{X_j: |i-j|> m\}$.
We obtain the following corollary of \cref{thm:local2} for the case $d=1$.
\begin{corollary}\label{cor:mdep}
Let $\{X_1,\dots, X_n\}$ be a sequence of $m$-dependent random variables with $m\geq 1$, $\E(X_i)=0$ and $\norm{X_i}_{\psi_1}\leq b$. 
Let $W=\frac{1}{\sqrt{n}}\sum_{i=1}^n X_i$. Suppose $\Var(W)=1$.
Let 
\be{
\Delta=\frac{m^2 b^3 \log^4 n}{\sqrt{n}}.
}
Then there exist positive absolute constants $c$ and $C$ such that, for
\be{
\Delta\leq c, \quad 0\leq x\leq \Delta^{-1/3},
}
we have
\be{
\left|\frac{P(W>x)}{P(Z>x)}-1    \right|\leq C(1+x)(1+|\log \Delta|+x^2)\Delta. 
}
\end{corollary}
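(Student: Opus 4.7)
The plan is to derive the corollary as a direct application of Theorem~\ref{thm:local2} with $d=1$ by making explicit the local-dependence parameters of the $m$-dependent setting. First, I would take $A_i=\{j\in[n]:|i-j|\leq m\}$, so that $|A_i|\leq 2m+1=:\theta_1$. For $j\in A_i$, I would set $A_{ij}=\{k:|k-i|\leq m\text{ or }|k-j|\leq m\}$; since $|i-j|\leq m$, we have $|A_{ij}|\leq 3m+1$. A short counting argument, splitting on whether $k$ or $l$ lies in $A_{ij}$ and using $|A_k|\leq 2m+1$ in each case, then yields
\[
|B_{ij}|\leq 2(3m+1)(2m+1)=:\theta_2=O(m^2).
\]
For the partition hypothesis, I would decompose $[n]$ into the $L=m+1$ arithmetic progressions of common difference $m+1$; within each, successive indices differ by at least $m+1$, so the corresponding $X_i$'s are independent.

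Inserting these choices together with $d=1$ and $\norm{X_i}_{\psi_1}\leq b$ into the parameter $\Delta_1$ of Theorem~\ref{thm:local2} yields
\[
\Delta_1\leq C\,\frac{mb\log n+m^{3/2}b^2\log^2 n+m^2b^3\log^4 n}{\sqrt n}.
\]
The crux of the plan is to check that the first two terms are absorbed by the third, so that $\Delta_1\leq C\Delta$ with $\Delta=m^2b^3\log^4 n/\sqrt n$. The key input is the normalization $\Var(W)=1$: writing
\[
1=\frac{1}{n}\sum_{i=1}^n\Var(X_i)+\frac{2}{n}\sum_{\substack{1\leq i<j\leq n\\ |i-j|\leq m}}\Cov(X_i,X_j)
\]
and bounding each summand in absolute value by $Cb^2$ via $\norm{X_i}_{\psi_1}\leq b$, the number of nonzero pairs being $O(nm)$, I arrive at $1\leq Cmb^2$. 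This gives $b^2\geq c/m$ and hence $b\geq c/\sqrt m$, from which the comparisons $mb\log n\leq Cm^2b^3\log^4 n$ (needing $mb^2\geq c$) and $m^{3/2}b^2\log^2 n\leq Cm^2b^3\log^4 n$ (needing $\sqrt m\, b\geq c$) are immediate.

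Finally, I would choose the constant $c$ in the corollary small enough that the hypothesis $\Delta\leq c$ forces $\Delta_1\leq c'$, where $c'$ is the absolute constant from Theorem~\ref{thm:local2}; in particular the range $0\leq x\leq\Delta^{-1/3}$ lies inside $0\leq x\leq\Delta_1^{-1/3}$. The theorem's conclusion then delivers $C(1+x)(1+|\log\Delta_1|+x^2)\Delta_1$, which up to absolute constants becomes $C(1+x)(1+|\log\Delta|+x^2)\Delta$. The only mildly delicate point I anticipate is the variance bookkeeping that provides the lower bound on $b$; once this is in hand, the remainder is a mechanical specialization of Theorem~\ref{thm:local2}.
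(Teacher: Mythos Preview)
Your proposal is correct and follows essentially the same approach as the paper's proof: specialize Theorem~\ref{thm:local2} with $L=m+1$, $\theta_1\asymp m$, $\theta_2\asymp m^2$, use the variance normalization to deduce $1\leq Cmb^2$, and conclude $\Delta_1\leq C\Delta$. One small slip: your claim that the range $0\leq x\leq\Delta^{-1/3}$ lies inside $0\leq x\leq\Delta_1^{-1/3}$ goes the wrong way, since in fact $\Delta_1\geq\Delta$ (the dominant term $\theta_1^2b^3\log^4 n/\sqrt n$ with $\theta_1=2m+1$ already exceeds $\Delta$); what saves you is that $\Delta_1\asymp\Delta$, so the ranges differ only by an absolute constant factor, which the paper's convention (see the remark after Corollary~\ref{cor:indep}) absorbs into $C$.
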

\begin{proof}[Proof of \cref{cor:mdep}]
Under $m$-dependence, $\{X_1, \dots, X_n\}$ can be partitioned into $L=m+1$ groups such that the $X$'s in each group are independent.
Moreover, the quantities appearing in the statement of \cref{thm:local2} can be taken as
\be{
\theta_1\asymp m, \  \theta_2\asymp m^2.
}
Using $1\leq C m b^2$, we have, $\Delta_1\leq C\Delta$.
The corollary then follows from \eq{026}.
\end{proof}
\end{example}

\begin{example}
In graph dependency structure (cf. \cite{BaRi89}),
each index $i\in \{1,\dots, n\}$ is represented by a node in a simple graph and $\{X_i: i\in A\}$ is assumed to be independent of $\{X_j: j\in B\}$ if $A$ and $B$ are disconnected.
In such graph dependency structure, if the maximum degree of the dependency graph is $deg^*$, then $L$ can be taken as $L=deg^*+1$. This is because each time we take out a group of independent summands, we can do it in a way that the max degree is decreased by 1. Therefore, \cref{thm:local2} also applies. We omit the straightforward result.
\end{example}

\begin{remark}\label{rem:LZ21}
\cite{LZ21} obtained a moderate deviation result under local dependence in dimension one using a different method. Their result is stated under a more general condition and does not have the additional logarithmic terms. However, the dependence on the neighborhood size and $b$ in their result is worse than ours. For example, under $m$-dependence, the bound using their Theorem 2.1 with $\kappa\asymp m$ and $a_n\asymp \sqrt{n}/(mb)$ is
\be{
\left|\frac{P(W>x)}{P(Z>x)}-1\right|\leq C(1+x^3) \frac{m^9 b^7}{\sqrt{n}},
}
while our bound is (cf. \cref{cor:mdep}), subject to logarithmic terms,
\be{
\left|\frac{P(W>x)}{P(Z>x)}-1\right|\lesssim_{\log} C(1+x^3) \frac{m^2 b^3}{\sqrt{n}}.
}
Moreover, our approach generalizes easily to multi-dimensions.
\end{remark}

\section{Proof of the $p$-Wasserstein bound}\label{sec:proof-p-wass}

In this section, we prove \cref{t1}.
Without loss of generality, we may assume $Z$ is independent of $\mcl{G}$ and $W'$. 

We introduce some notation. 
Let $k\in\mathbb{N}$. Given families of real numbers $a=(a_{i_1,\dots,i_k})_{1\leq i_1,\dots,i_k\leq d}$ and $b=(b_{i_1,\dots,i_k})_{1\leq i_1,\dots,i_k\leq d}$, we set
\[
\langle a,b\rangle:=\sum_{i_1,\dots,i_k=1}^da_{i_1,\dots,i_k}b_{i_1,\dots,i_k},\qquad
|a|:=\sqrt{\langle a,a\rangle}=\sqrt{\sum_{i_1,\dots,i_k=1}^da_{i_1,\dots,i_k}^2}.
\]
Note that, if $k=2$, $\langle a,b\rangle=\langle a,b\rangle_{H.S.}$ and $|a|=\|a\|_{H.S.}$. 
For $x_1,\dots,x_k\in\mathbb{R}^d$, we define
\[
x_1\otimes\cdots\otimes x_k:=(x_{1,i_1}\cdots x_{k,i_k})_{1\leq i_1,\dots,i_k\leq d}.
\]
If $x_1=\cdots=x_d=:x$, we write $x_1\otimes\cdots\otimes x_k=x^{\otimes k}$ for short. 
Also, if a function $f:\mathbb{R}^d\to\mathbb{R}$ is $k$-times differentiable at $w\in\mathbb{R}^d$, we set
\ben{\label{eq:nabla}
\nabla^kf(w):=\left(\frac{\partial^kf}{\partial w_{i_1}\cdots\partial w_{i_k}}(w)\right)_{1\leq i_1,\dots,i_k\leq d}.
}
Given a family of random variables $X=(X_{i_1,\dots,i_k})_{1\leq i_1,\dots,i_k\leq d}$ and $p>0$, we set
\[
\|X\|_p:=\left(\E|X|^p\right)^{1/p}.
\]

We denote by $\phi$ the $d$-dimensional standard normal density. For brevity, we write $\eta_t$ instead of $\eta_t(p)$ throughout this section. 

\subsection{Auxiliary estimates}

For every $t>0$, we set $F_t:=e^{-t}W+\sqrt{1-e^{-2t}}Z$. It is straightforward to check that $F_t$ has a smooth density $f_t$ with respect to $N(0,I_d)$. Moreover, $f_t$ is strictly positive by Lemma 3.1 of \cite{JoSu01}. Therefore, we can define the \textit{score} of $F_t$ with respect to $N(0,I_d)$ by $\rho_t(w)=\nabla \log f_t(w)$, $w\in\mathbb{R}^d$. We use $C$ to denote positive absolute constants, which may differ in different expressions.
\begin{proposition}\label{p1}
Let $p\geq1$ and $t>0$. 
Under the assumptions of \cref{t1}, we have
\ba{
\|\rho_t(F_t)\|_p
\leq C e^{-t}\left(
\|R_t\|_p+\frac{\|E\|_p}{\eta_t}
+\min\left\{\frac{\sqrt {d}}{\eta_t}, \frac{\|\E[D^{\otimes 2}|D|^21_{\{|D|\leq\eta_t\}}\mid \mcl{G}]\|_p}{\lambda\eta_t^{3}}\right\}
\right).
}
\end{proposition}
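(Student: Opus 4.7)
The plan is to bound $\|\rho_t(F_t)\|_p$ by duality: since $\rho_t(F_t)$ is the score of $F_t$ with respect to $N(0,I_d)$, one has $\|\rho_t(F_t)\|_p=\sup_h\E[h(F_t)\cdot\rho_t(F_t)]$, where the supremum runs over vector-valued test functions with $\|h(F_t)\|_q\leq1$ ($q=p/(p-1)$). Using $F_t=e^{-t}W+\sqrt{1-e^{-2t}}Z$ with $Z\indep\mcl{G}$ and Gaussian integration by parts on the $Z$-component, a short calculation (together with the explicit Mehler form of $\rho_t$) gives the starting identity
\be{
\E[h(F_t)\cdot\rho_t(F_t)]=e^{-t}\E[W\cdot h(F_t)]-e^{-2t}\E[\operatorname{div}h(F_t)].
}

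I would then feed the approximate-linearity condition $W=-\lambda^{-1}\E[D\mid\mcl{G}]-R$ into the first term. The crucial move is to truncate $D$ at the scale $\eta_t=\sqrt{(e^{2t}-1)/p}$: writing $D=D\mathbbm{1}_{\{|D|\leq\eta_t\}}+D\mathbbm{1}_{\{|D|>\eta_t\}}$, the tail part combines with $R$ to form exactly $R_t$ in \eqref{def:rt}, producing the bound $e^{-t}\|R_t\|_p$. For the truncated piece, I would use the exchangeability trick
\be{
\E[\lambda^{-1}D\mathbbm{1}_{\{|D|\leq\eta_t\}}\cdot h(F_t)]=-\tfrac{1}{2}\E[(h(F_t')-h(F_t))\cdot\lambda^{-1}D\mathbbm{1}_{\{|D|\leq\eta_t\}}],
}
where $F_t'=F_t+e^{-t}D$, and Taylor-expand $h(F_t')-h(F_t)=e^{-t}\int_0^1\nabla h(F_t+se^{-t}D)D\,ds$ to convert this into $\E[\langle\nabla h(F_t+se^{-t}D),\lambda^{-1}D\otimes D\mathbbm{1}_{\{|D|\leq\eta_t\}}\rangle]$.

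The accounting step is then to replace $\nabla h(F_t+se^{-t}D)$ by $\nabla h(F_t)$ and to substitute $\tfrac{1}{2}\E[\lambda^{-1}D\otimes D\mid\mcl{G}]=I_d+E$. The leading $I_d$-piece reproduces $e^{-2t}\E[\operatorname{div}h(F_t)]$ and cancels the second term of the starting identity, leaving three residual contributions: (i) the $E$-term $e^{-2t}\E[\langle\nabla h(F_t),E\rangle]$; (ii) a truncation-tail piece involving $\lambda^{-1}\E[D\otimes D\mathbbm{1}_{\{|D|>\eta_t\}}\mid\mcl{G}]$; and (iii) a Taylor remainder involving $\nabla h(F_t+se^{-t}D)-\nabla h(F_t)$ against $D\otimes D\mathbbm{1}_{\{|D|\leq\eta_t\}}$.

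The hard step will be controlling (i)--(iii) in terms of $\|h(F_t)\|_q\leq1$ alone, since the dual test function $h$ need not be smooth. Here I would invoke the Bobkov-type regularization estimates on the OU-smoothed law of $F_t$: using Gaussian integration by parts on the $Z$-component yields pointwise-in-$\mcl{G}$ identities of the schematic form $\E[\nabla^k h(F_t)\mid\mcl{G}]=(1-e^{-2t})^{-k/2}\E[h(F_t)\otimes H_k(Z)\mid\mcl{G}]$ (with $H_k$ a Hermite tensor), and the identity $\sqrt{1-e^{-2t}}=e^{-t}\eta_t\sqrt{p}$ converts the $(1-e^{-2t})^{-k/2}$ factors into the $e^t/\eta_t$ and $e^{3t}/\eta_t^3$ denominators in the statement. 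Hölder in $p,q$ and $L^p$-bounds for the Gaussian Hermite factors then give $e^{-t}\|E\|_p/\eta_t$ for (i), while (ii) and (iii) get absorbed into the third term. The $\min$ in the third term reflects the two available routes: a crude first-order bound $|\langle\nabla h,\cdot\rangle|\leq\|\nabla h\|_{H.S.}|\cdot|$ that costs a $\sqrt d$ but only one power of $1/\eta_t$, versus a second-order Taylor bound that instead pays $\lambda^{-1}\|\E[D^{\otimes 2}|D|^2\mathbbm{1}_{\{|D|\leq\eta_t\}}\mid\mcl{G}]\|_p/\eta_t^3$. Taking the supremum over $h$ and collecting the three contributions yields the claimed bound on $\|\rho_t(F_t)\|_p$.
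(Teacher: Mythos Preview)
Your skeleton is right: score formula for $\rho_t(F_t)$, the approximate linearity condition, truncation at $\eta_t$, exchangeability, Taylor expansion, and Gaussian integration by parts on the $Z$-component are exactly the ingredients. The cancellation of the $I_d$-piece against $e^{-2t}\E[\operatorname{div}h(F_t)]$ is also correctly identified. But the execution differs from the paper's in a way that leaves a real gap.

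The paper never works by duality with test functions. Instead, it uses $\rho_t(F_t)=e^{-t}\E[W-Z/\sqrt{e^{2t}-1}\mid F_t]$ and subtracts a random vector $\tau_t$ with $\E[\tau_t\mid F_t]=0$ (Lemma~6.5); then $\|\rho_t(F_t)\|_p$ is bounded by the $L^p$-norm of the integrand via Jensen. The construction of $\tau_t$ uses the \emph{full} infinite Taylor expansion of $T_th(W_t)$ around $W$ (Lemma~6.4), so every derivative of $T_th$ is evaluated at the single, unshifted point $W$, and the Mehler identity \eqref{deriv} rewrites each $\nabla^kT_th(W)$ as an expectation of $h(F_t)$ against a Hermite polynomial in $Z$. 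A second application of exchangeability at the quadratic level (the identity \eqref{043}) kills the $k=2$ coefficient ($a_2=0$), which is what makes the infinite series summable and explains why the final bound has only the $\eta_t^{-1}$ and $\eta_t^{-3}$ scales. The $\min$ then arises, not from two Taylor routes, but from the two pointwise inequalities for $|E_t|$ and for $|\E[D_t^{\otimes(k+1)}\mid\mcl{G}]|$ in Step~2 (one via $|\E[D^{\otimes2}\mid\mcl{G}]|\leq2\lambda(|E|+\sqrt d)$, the other via the positive-semidefinite Lemma~6.1).

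In your finite-order scheme, the remainder term (iii) carries $\nabla h(F_t+se^{-t}D)$ (and, after one more Taylor step, $\nabla^2h$ at $F_t+ue^{-t}D$). Your Gaussian IBP identity is written at the unshifted argument $F_t$, but here the argument is shifted by $se^{-t}D$. After IBP conditionally on $(W,W')$, you land on $h(F_t+se^{-t}D)$, and the duality constraint is $\|h(F_t)\|_q\leq1$, not $\|h(F_t+se^{-t}D)\|_q\leq1$; for $0<s<1$ the interpolated point has no exchangeability symmetry, so this is not automatic. One can try to repair it with a Girsanov shift in $Z$ (the displacement is at most $e^{-t}\eta_t/\sqrt{1-e^{-2t}}=1/\sqrt p$), but that argument is not in your proposal, and carrying it through uniformly in $p$ and in the tensor structure is precisely the work you would need to do. The paper's infinite-series-at-one-point approach, together with the second exchangeability trick at $k=2$, bypasses this issue entirely.
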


We need some lemmas to prove \cref{p1}.
\begin{lemma}[Lemma A.1 of \cite{FaKo22}]\label{psd}
Let $Y=(Y_{ij})_{1\leq i, j\leq d}$ be a $d\times d$ positive semidefinite symmetric random matrix. Let $F$ and $G$ be two random variables such that $|F|\leq G$. Suppose that $\E|Y_{ij} F|<\infty$ for all $i,j=1,\dots, d$. 
Let $\mathcal{G}$ be an arbitrary $\sigma$-field.
Then we have
\be{
\norm{\E[YF|\mathcal{G}]}_{H.S.}\leq \norm{\E[YG|\mathcal{G}]}_{H.S.}.
}
\end{lemma}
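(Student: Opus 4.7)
My plan is to use duality of the Hilbert--Schmidt norm together with the positivity of $Y$ to reduce the matrix inequality to the pointwise bound $|F|\leq G$. Set $A:=\E[YF\mid\mcl G]$. Since $Y$ is symmetric, so is $A$, and on the event $\{\|A\|_{H.S.}>0\}$ the matrix $U:=A/\|A\|_{H.S.}$ is symmetric, $\mcl G$-measurable, and of unit Hilbert--Schmidt norm. By construction,
\[
\|A\|_{H.S.}=\langle U,A\rangle_{H.S.}=\E[\tr(UY)\,F\mid\mcl G]\leq\E[|\tr(UY)|\,G\mid\mcl G],
\]
where the last step uses $|F|\leq G$ pointwise.

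The key step is the inequality $|\tr(UY)|\leq\tr(|U|\,Y)$ for symmetric $U$ and $Y\succeq0$, where $|U|$ is the matrix absolute value obtained from the spectral decomposition. I would prove it by decomposing $U=U_+-U_-$ with $U_\pm\succeq0$ and $U_+U_-=0$. Since $Y\succeq0$, we have $\tr(U_\pm Y)=\tr(U_\pm^{1/2}Y U_\pm^{1/2})\geq0$, so
\[
|\tr(UY)|=|\tr(U_+Y)-\tr(U_-Y)|\leq\tr(U_+Y)+\tr(U_-Y)=\tr(|U|\,Y).
\]
Moreover $|U|=U_++U_-$ has the same singular values as $U$, so $\||U|\|_{H.S.}=\|U\|_{H.S.}=1$.

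Substituting back and pulling $|U|$, which is $\mcl G$-measurable, inside the conditional expectation, the Cauchy--Schwarz inequality for the Hilbert--Schmidt inner product then yields
\[
\|A\|_{H.S.}\leq\E[\tr(|U|\,Y)\,G\mid\mcl G]=\tr\bigl(|U|\,\E[YG\mid\mcl G]\bigr)\leq\||U|\|_{H.S.}\,\|\E[YG\mid\mcl G]\|_{H.S.}=\|\E[YG\mid\mcl G]\|_{H.S.}.
\]
On $\{\|A\|_{H.S.}=0\}$ the conclusion is trivial. The only nontrivial ingredient is the matrix inequality $|\tr(UY)|\leq\tr(|U|\,Y)$; once that is established the rest is pure duality plus Cauchy--Schwarz, so I do not expect a genuine obstacle here.
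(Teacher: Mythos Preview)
Your proof is correct. The paper does not actually prove this lemma here --- it is quoted without proof from \cite{FaKo22} --- so there is no in-paper argument to compare against. Your duality approach (pair $A=\E[YF\mid\mcl G]$ against $U=A/\|A\|_{H.S.}$, then use $|\tr(UY)|\leq\tr(|U|Y)$ for $Y\succeq0$) is clean and goes through as written; the $\mcl G$-measurability of $|U|$ follows from the continuity of $M\mapsto(M^2)^{1/2}$ on symmetric matrices.

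For comparison, a slightly shorter route that avoids the spectral decomposition of $U$ is to note directly that $\E[Y(G\pm F)\mid\mcl G]\succeq0$ (since $Y\succeq0$ and $G\pm F\geq0$ pointwise), so that with $B:=\E[YG\mid\mcl G]$ one has $-B\preceq A\preceq B$ and hence
\[
\|B\|_{H.S.}^2-\|A\|_{H.S.}^2=\tr\bigl((B-A)(B+A)\bigr)\geq0,
\]
the last step because the trace of a product of two positive semidefinite matrices is nonnegative. Both arguments are equivalent in strength; yours makes the role of duality explicit, while this one reduces everything to a single trace identity.
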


\begin{lemma}[Lemma A.2 of \cite{FaKo22}]\label{tensor}
Let $Y$ be a random vector in $\mathbb{R}^d$ such that $\E|Y|^k<\infty$ for some integer $k\geq 2$. 
Let $\mathcal{G}$ be an arbitrary $\sigma$-field.
Then
\be{
|\E [Y^{\otimes k}|\mathcal{G}]|\leq \norm{\E[Y^{\otimes2}|Y|^{k-2}|\mathcal{G}]}_{H.S.}.
}
\end{lemma}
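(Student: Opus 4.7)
The plan is to pass to an independent copy of $Y$ and thereby reduce the tensorial inequality to a scalar pointwise one. On an enlarged probability space (obtained from a regular conditional distribution of $Y$ given $\mcl{G}$), introduce a random vector $Y'$ that is conditionally independent of $Y$ given $\mcl{G}$ and has the same conditional law as $Y$ given $\mcl{G}$. The hypothesis $\E|Y|^k<\infty$ is enough to ensure that all the conditional moments appearing below are almost surely finite.

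The engine of the argument is the elementary identity
\be{
\langle x^{\otimes k},y^{\otimes k}\rangle=(x\cdot y)^k,\qquad x,y\in\mathbb R^d,
}
which follows from the rearrangement $\sum_{i_1,\dots,i_k}x_{i_1}y_{i_1}\cdots x_{i_k}y_{i_k}=\prod_{j=1}^k\sum_{i_j}x_{i_j}y_{i_j}$. Combined with the conditional independence of $Y$ and $Y'$, this identity rewrites both sides of the desired inequality as conditional moments of the scalar variable $Y\cdot Y'$:
\be{
|\E[Y^{\otimes k}|\mcl{G}]|^2=\E[(Y\cdot Y')^k|\mcl{G}],\qquad\norm{\E[Y^{\otimes 2}|Y|^{k-2}|\mcl{G}]}_{H.S.}^2=\E[(Y\cdot Y')^2|Y|^{k-2}|Y'|^{k-2}|\mcl{G}].
}

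With this translation in hand, the lemma reduces to the almost sure pointwise bound $(Y\cdot Y')^k\leq(Y\cdot Y')^2\,|Y|^{k-2}|Y'|^{k-2}$. I would prove it by factoring out the nonnegative square $(Y\cdot Y')^2$ and then controlling the remaining factor via $(Y\cdot Y')^{k-2}\leq|Y\cdot Y'|^{k-2}\leq|Y|^{k-2}|Y'|^{k-2}$, where the first step uses the trivial inequality $a\leq|a|$ for $a\in\mathbb R$ (covering both parities of $k-2$) and the second step uses the ordinary Cauchy--Schwarz inequality $|Y\cdot Y'|\leq|Y||Y'|$ together with the monotonicity of $t\mapsto t^{k-2}$ on $[0,\infty)$. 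Taking conditional expectation of the pointwise bound and extracting the square root yields the lemma.

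There is no real technical obstacle; the only measure-theoretic subtlety is the construction of the conditionally i.i.d.\ copy $Y'$, which is routine. All of the real content is carried by the single observation that the Hilbert--Schmidt inner product of two rank-one tensors $x^{\otimes k}$ and $y^{\otimes k}$ collapses to the scalar power $(x\cdot y)^k$, after which Cauchy--Schwarz finishes the job.
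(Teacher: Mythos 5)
Your proof is correct: the reduction to a conditionally i.i.d.\ copy $Y'$ of $Y$ given $\mcl{G}$, the collapse $\langle x^{\otimes k},y^{\otimes k}\rangle=(x\cdot y)^k$ turning both sides into $\E[(Y\cdot Y')^k\mid\mcl{G}]$ and $\E[(Y\cdot Y')^2|Y|^{k-2}|Y'|^{k-2}\mid\mcl{G}]$, and the pointwise estimate $(Y\cdot Y')^k\leq(Y\cdot Y')^2|Y|^{k-2}|Y'|^{k-2}$ from $a\leq|a|$ and Cauchy--Schwarz together give exactly the stated bound after taking conditional expectations and square roots. The paper itself supplies no proof of this lemma (it simply cites Lemma~A.2 of Fang and Koike, 2022), and your argument is the standard and essentially canonical route to this inequality, with the finiteness hypothesis $\E|Y|^k<\infty$ guaranteeing all conditional moments are a.s.\ finite.
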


\begin{lemma}\label{hyper}
Let $F$ be a random vector in $\mathbb{R}^m$ whose components are of the form $Q(Z_1,\dots,Z_d)$, where $Q$ is a polynomial of degree $\leq k$. 
Then, for every $p>0$,
\[
\|F\|_p\leq\kappa_p^k\|F\|_2,
\]  
where $\kappa_p:=e\sqrt{(p/2-1)\vee1}$.
\end{lemma}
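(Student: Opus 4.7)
The plan is to reduce this to Nelson's Gaussian hypercontractive inequality, which asserts that for any real polynomial $Q$ of degree at most $k$ in $d$ independent standard normal variables and any $p\geq 2$,
\[
\|Q(Z_1,\dots,Z_d)\|_p\leq(p-1)^{k/2}\|Q(Z_1,\dots,Z_d)\|_2.
\]
This is the classical form one finds in, e.g., \cite{NoPe12}. Since the variance parameter $\kappa_p$ in the statement satisfies $\kappa_p\geq e>1$, the case $p\leq 2$ is immediate from the monotonicity of $L^p$ norms, so the real work is for $p\geq 2$.

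For $p\geq 2$, I would first apply Nelson componentwise to each $F_i=Q_i(Z_1,\dots,Z_d)$ to get $\|F_i\|_p\leq(p-1)^{k/2}\|F_i\|_2$, and then combine the components via the triangle inequality in $L^{p/2}$. Concretely, since $|F|^2=\sum_{i=1}^mF_i^2$,
\[
\|F\|_p^2=\bigl\||F|^2\bigr\|_{p/2}
=\Bigl\|\sum_{i=1}^mF_i^2\Bigr\|_{p/2}
\leq\sum_{i=1}^m\|F_i^2\|_{p/2}
=\sum_{i=1}^m\|F_i\|_p^2
\leq(p-1)^k\sum_{i=1}^m\|F_i\|_2^2
=(p-1)^k\|F\|_2^2,
\]
which gives the bound with constant $(p-1)^{k/2}$ in place of $\kappa_p^k$.

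It then remains to verify $(p-1)^{k/2}\leq\kappa_p^k$ for all $p\geq 2$, which is a one-line elementary calculation. If $2\leq p\leq 4$, then $\kappa_p=e$ and one uses $\sqrt{p-1}\leq\sqrt{3}<e$. If $p\geq 4$, then $\kappa_p=e\sqrt{p/2-1}$ and the inequality $p-1\leq e^2(p/2-1)$ is equivalent to $p\geq 2(e^2-1)/(e^2-2)\approx 2.37$, which holds. The main (and only) potential friction point is pinning down the comparison of constants cleanly, since $\kappa_p$ is designed to absorb both regimes $p\leq 4$ and $p\geq 4$ uniformly; modulo that bookkeeping, the result is a direct invocation of Gaussian hypercontractivity.
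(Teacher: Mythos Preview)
Your proof is correct, but it differs from the paper's route. The paper applies hypercontractivity directly to the scalar polynomial $|F|^2$ (of degree $\leq 2k$): first the $L^{p/2}$-to-$L^2$ bound from Janson's Theorem~5.10/Remark~5.11 gives $\|F\|_p\leq\{(p/2-1)\vee1\}^{k/2}\||F|^2\|_2^{1/2}$, and then a reverse-type estimate $\||F|^2\|_2\leq e^{2k}\||F|^2\|_1=e^{2k}\|F\|_2^2$ (Remark~5.13) is used to return to $\|F\|_2$; this second step is exactly the origin of the factor $e^k$ in $\kappa_p$. You instead apply Nelson componentwise and then pass to the vector bound via the $L^{p/2}$-triangle inequality for $\sum_iF_i^2$, which yields the sharper constant $(p-1)^{k/2}$ before you dominate it by $\kappa_p^k$. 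Your argument is slightly more elementary in that it needs only the standard forward hypercontractive inequality, and in fact proves a stronger bound for $p\geq2$; the paper's argument avoids any componentwise step by treating $|F|^2$ as a single Gaussian polynomial, at the cost of invoking the reverse estimate.
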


\begin{proof}
Since $|F|^2$ is a polynomial of degree $\leq 2k$ in $Z_1,\dots,Z_d$ by assumption, we have by Theorem 5.10 and Remark 5.11 of \cite{Ja97}
\ba{
\|F\|_p=\||F|^2\|_{p/2}^{1/2}
&\leq(p/2-1)^{k/2}\||F|^2\|_{2}^{1/2}
}
if $p\geq4$. Since we have $\||F|^2\|_{p/2}\leq\||F|^2\|_{2}$ if $p<4$, we obtain
\ben{\label{hyper1}
\|F\|_p\leq\{(p/2-1)\vee1\}^{k/2}\||F|^2\|_{2}^{1/2}.
}
Next, we have by Theorem 5.10 and Remark 5.13 of \cite{Ja97}
\ben{\label{hyper2}
\||F|^2\|_{2}\leq e^{2k}\||F|^2\|_1=e^{2k}\|F\|_2^2.
}
The desired result follows from \eqref{hyper1}--\eqref{hyper2}. 
\end{proof}

Given a bounded measurable function $h:\mathbb{R}^d\to\mathbb{R}$ and $t>0$, we define the function $T_th:\mathbb{R}^d\to\mathbb{R}$ by
\[
T_th(w)=\E h(e^{-t}w+\sqrt{1-e^{-2t}}Z),\qquad w\in\mathbb{R}^d.
\]
One can easily check that $T_th$ is infinitely differentiable and 
\ben{\label{deriv}
\nabla^kT_th(w)=\frac{(-1)^k}{(e^{2t}-1)^{k/2}}\int_{\mathbb{R}^d}h(e^{-t}w+\sqrt{1-e^{-2t}}z)\nabla^k\phi(z)dz,\qquad k=1,2,\dots.
}

\begin{lemma}\label{l1}
Let $X$ and $X'$ be two $d$-dimensional random vectors such that $|X'-X|$ is bounded, and set $Y:=X'-X$. 
Then, for any integer $l\geq0$, bounded measurable function $h:\mathbb{R}^d\to\mathbb{R}$ and $t>0$, we have
\ba{
\langle\nabla^lT_th(X')-\nabla^l T_th(X),Y^{\otimes l}\rangle=\sum_{k=1}^\infty\frac{1}{k!}\langle\nabla^{l+k}T_th(X),Y^{\otimes (l+k)}\rangle\qquad\text{in }L^\infty(P).
}
\end{lemma}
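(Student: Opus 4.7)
The plan is to interpret the identity as the Taylor expansion of the tensor-valued map $w\mapsto\nabla^lT_th(w)$ around $X$, evaluated at $X'=X+Y$, then contracted with $Y^{\otimes l}$. The two required inputs are real analyticity of $T_th$ with infinite radius of convergence (coming from the Gaussian convolution structure) and the a.s.\ boundedness of $|Y|$; together these let me upgrade the pointwise Taylor identity to convergence in $L^\infty(P)$.

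First I would derive a uniform derivative bound on $T_th$. From \cref{deriv} and $|h|\le\|h\|_\infty$,
\[
\sup_{w\in\mathbb{R}^d}|\nabla^{k}T_th(w)|\le \frac{\|h\|_\infty}{(e^{2t}-1)^{k/2}}\int_{\mathbb{R}^d}|\nabla^{k}\phi(z)|\,dz.
\]
The components of $\nabla^k\phi$ have the form $(-1)^k\phi H_\alpha$ for multi-indices $\alpha$ with $|\alpha|=k$, where $H_\alpha$ is a product of Hermite polynomials; Cauchy--Schwarz combined with $\int H_\alpha^2\phi\,dz=\prod_j\alpha_j!$ gives $\int|\nabla^k\phi(z)|\,dz\le\sqrt{k!\binom{k+d-1}{d-1}}$. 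Hence $\sup_w|\nabla^kT_th(w)|\le \|h\|_\infty P_d(k)\sqrt{k!}/(e^{2t}-1)^{k/2}$ for some polynomial $P_d$ depending on $d$.

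Next, let $M$ be a deterministic constant with $|Y|\le M$ a.s., and fix $\omega$ outside a null set. Applying the multivariate Taylor formula with Lagrange remainder to $w\mapsto\nabla^lT_th(w)$ at base point $X(\omega)$ with displacement $Y(\omega)$, and then contracting with $Y(\omega)^{\otimes l}$, yields for every $N\ge 0$
\[
\langle\nabla^lT_th(X+Y),Y^{\otimes l}\rangle
=\sum_{k=0}^{N}\frac{1}{k!}\langle\nabla^{l+k}T_th(X),Y^{\otimes(l+k)}\rangle+R_N,
\]
where $|R_N|\le\frac{|Y|^{l+N+1}}{(N+1)!}\sup_w|\nabla^{l+N+1}T_th(w)|$. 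By the derivative bound, $R_N$ is dominated by a deterministic quantity of order $C_{t,l,d,h}\,M^{N+1}\sqrt{(l+N+1)!}/(N+1)!$, which tends to $0$ as $N\to\infty$ since $\sqrt{(N+1)!}/(N+1)!=1/\sqrt{(N+1)!}$ beats any exponential by Stirling. Subtracting the $k=0$ term, which equals $\langle\nabla^lT_th(X),Y^{\otimes l}\rangle$, recovers the claimed identity with convergence in $L^\infty(P)$.

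The main potential obstacle is the derivative bound on $T_th$, i.e.\ confirming that $T_th$ is real analytic with infinite radius at every base point. Conceptually this is a soft fact about the Ornstein--Uhlenbeck semigroup applied to a bounded function: the integral representation extends $T_th$ to an entire function on $\mathbb{C}^d$ with explicit Gaussian-type growth, so the Taylor-remainder decay is automatic. Once that is in place, Taylor's theorem combined with the a.s.\ bound on $|Y|$ delivers the result.
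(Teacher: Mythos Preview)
Your proposal is correct and follows essentially the same route as the paper: Taylor's theorem with Lagrange remainder applied to $w\mapsto\nabla^lT_th(w)$, combined with a derivative bound coming from \cref{deriv} and Cauchy--Schwarz against the Gaussian measure, yielding $\sqrt{k!}$-type growth that is beaten by the $1/(N+1)!$ from the remainder. The only cosmetic difference is that the paper bounds the contraction $\langle\nabla^kT_th(w),Y^{\otimes k}\rangle$ directly (via Lemma~4.3 of \cite{FaRo15}, giving the dimension-free estimate $\sqrt{k!}|Y|^k$), whereas you bound the full tensor norm $|\nabla^kT_th(w)|$ first, picking up the extra factor $\sqrt{\binom{k+d-1}{d-1}}$; this is harmless for the convergence argument.
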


\begin{proof}
By assumption, there is a constant $M>0$ such that $|Y|\leq M$ and $\sup_{x\in\mathbb{R}^d}|h(x)|\leq M$. Using \eqref{deriv}, we deduce
\ba{
|\langle \nabla^kT_th(w),Y^{\otimes k}\rangle|&\leq\frac{M}{(e^{2t}-1)^{k/2}}\int_{\mathbb{R}^d}|\langle\nabla^k\phi(z),Y^{\otimes k}\rangle| dz\\
&\leq\frac{M}{(e^{2t}-1)^{k/2}}\sqrt{\int_{\mathbb{R}^d}\left(\frac{\langle\nabla^k\phi(z),Y^{\otimes k}\rangle}{\phi(z)}\right)^2 \phi(z)dz}.
}
Thus, we have by Lemma 4.3 of \cite{FaRo15}
\ba{
|\langle \nabla^kT_th(w),Y^{\otimes k}\rangle|
&\leq\frac{M}{(e^{2t}-1)^{k/2}}\sqrt{k!|Y^{\otimes k}|^2}
=\frac{M\sqrt{k!}|Y|^k}{(e^{2t}-1)^{k/2}}
\leq\frac{M^{k+1}\sqrt{k!}}{(e^{2t}-1)^{k/2}}.
}
Hence, for any integer $K>0$, we have by Taylor's expansion
\ba{
&\left|\langle\nabla^lT_th(X')-\nabla^l T_th(X),Y^{\otimes l}\rangle-\sum_{k=1}^K\frac{1}{k!}\langle\nabla^{l+k}T_th(X),Y^{\otimes (l+k)}\rangle\right|\\
&\leq\sup_{u\in[0,1]}\frac{1}{(K+1)!}\left|\langle\nabla^{l+K+1}T_th(X+uY),Y^{\otimes (l+K+1)}\rangle\right|
\leq\frac{M^{l+K+1}\sqrt{(l+K+1)!}}{(K+1)!(e^{2t}-1)^{(l+K+1)/2}}.
}
Since the last quantity tends to 0 as $K\to\infty$, we complete the proof. 
\end{proof}

For every $t>0$, let 
\ba{
D_t&:=D1_{\{|D|\leq\eta_t\}},&
W_t&:=W+D_t.
}
Note that we have
\be{
W_t=\begin{cases}
W' & \text{if }|D|\leq\eta_t,\\
W & \text{if }|D|>\eta_t.
\end{cases}
}
One can check that $(W,W_t)$ is an exchangeable pair. In fact, for any $u,v\in\mathbb{R}^d$, we have
\ba{
\E[e^{\sqrt{-1}(u\cdot W+v\cdot W_t)}]
&=\E[e^{\sqrt{-1}(u\cdot W+v\cdot W')};|D|\leq\eta_t]
+\E[e^{\sqrt{-1}(u\cdot W+v\cdot W)};|D|>\eta_t]\\
&=\E[e^{\sqrt{-1}(u\cdot W'+v\cdot W)};|D|\leq\eta_t]
+\E[e^{\sqrt{-1}(u\cdot W+v\cdot W)};|D|>\eta_t]\\
&=\E[e^{\sqrt{-1}(u\cdot W_t+v\cdot W)};|D|\leq\eta_t]
+\E[e^{\sqrt{-1}(u\cdot W_t+v\cdot W)};|D|>\eta_t]\\
&=\E[e^{\sqrt{-1}(u\cdot W_t+v\cdot W)}],
}
where the second equality follows from the exchangeability of $(W,W')$. 
Also, using \eqref{1} and recalling \eqref{def:rt}, one can easily check
\ben{\label{rt}
\E[W_t-W\mid\mcl{G}]
=-\Lambda(W+R_t).
}
Let us set
\ben{\label{def:tau}
\tau_t:=\E\left[\Lambda^{-1}D_t\left(1-\frac{1}{2}\frac{\langle\nabla \phi(Z),D_t\rangle}{\phi(Z)\sqrt{e^{2t}-1}}
+\frac{1}{2}\sum_{k=3}^\infty a_k\frac{(-1)^k\langle\nabla^k \phi(Z),D_t^{\otimes k}\rangle}{\phi(Z)(e^{2t}-1)^{k/2}}\right)\mid \mcl{G}\vee\sigma(Z)\right],
}
where $a_k:=\frac{1}{k!}-\frac{1}{4(k-2)!}$.
As in the proof of \cref{l1}, one can check that the series inside the conditional expectation in \eqref{def:tau} converges in $L^1(P)$, so $\tau_t$ is well-defined. 

\begin{lemma}\label{lem:tau}
$\E[\tau_t\mid F_t]=0$ for all $t>0$. 
\end{lemma}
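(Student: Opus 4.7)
The plan is to show $\E[h(F_t)\tau_t]=0$ for every bounded measurable test function $h:\mathbb R^d\to\mathbb R$, which characterizes $\E[\tau_t\mid F_t]=0$. Since $F_t$ is $\mcl G\vee\sigma(Z)$-measurable and $\tau_t$ is a conditional expectation given $\mcl G\vee\sigma(Z)$, the tower property lets us drop the inner conditional expectation and write
\[
\E[h(F_t)\tau_t]=\E\left[h(F_t)\Lambda^{-1}D_t\left(1-\tfrac{1}{2}\tfrac{\langle\nabla\phi(Z),D_t\rangle}{\phi(Z)\sqrt{e^{2t}-1}}+\tfrac{1}{2}\sum_{k=3}^\infty a_k\tfrac{(-1)^k\langle\nabla^k\phi(Z),D_t^{\otimes k}\rangle}{\phi(Z)(e^{2t}-1)^{k/2}}\right)\right].
\]
Because $|D_t|\leq\eta_t$, $h$ is bounded, and $\sqrt{k!}/(e^{2t}-1)^{k/2}|D_t|^k\leq p^{-k/2}\sqrt{k!}$, the series is absolutely convergent in $L^1$ so all interchanges below are legitimate.

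Next, I would condition on $\mcl G\vee\sigma(W')$ (which fixes $W$, $D$, $D_t$ and $\Lambda^{-1}D_t$) and integrate out $Z\sim N(0,I_d)$. Using the identity \eqref{deriv}, i.e.\ $\int h(e^{-t}W+\sqrt{1-e^{-2t}}z)\nabla^k\phi(z)dz=(-1)^k(e^{2t}-1)^{k/2}\nabla^kT_th(W)$, the bracket collapses termwise to give
\[
\E[h(F_t)\tau_t]=\E\bigg[\Lambda^{-1}D_t\bigg(T_th(W)+\tfrac{1}{2}\langle\nabla T_th(W),D_t\rangle+\tfrac{1}{2}\sum_{k=3}^\infty a_k\langle\nabla^kT_th(W),D_t^{\otimes k}\rangle\bigg)\bigg].
\]
Then I would exploit the exchangeability of $(W,W_t)$ (verified in the paragraph preceding \eqref{rt}) through two identities, one for even tensor order $l=0$ and one for $l=2$. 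For each even $l$, the function $f_l(w,w'):=\Lambda^{-1}(w'-w)\langle\nabla^lT_th(w),(w'-w)^{\otimes l}\rangle$ satisfies $f_l(W_t,W)=-f_l(W,W_t)$ after a sign bookkeeping, so exchangeability yields $\E[\Lambda^{-1}D_t\bklr{\langle\nabla^lT_th(W),D_t^{\otimes l}\rangle+\langle\nabla^lT_th(W_t),D_t^{\otimes l}\rangle}]=0$. Combining this with the Taylor expansion of $\langle\nabla^lT_th(W_t)-\nabla^lT_th(W),D_t^{\otimes l}\rangle$ from \cref{l1} produces
\begin{align*}
\text{(I)}\quad &\E[\Lambda^{-1}D_t\,T_th(W)]=-\tfrac{1}{2}\sum_{k=1}^\infty\tfrac{1}{k!}\E[\Lambda^{-1}D_t\langle\nabla^kT_th(W),D_t^{\otimes k}\rangle],\\
\text{(II)}\quad &\E[\Lambda^{-1}D_t\langle\nabla^2T_th(W),D_t^{\otimes 2}\rangle]=-\tfrac{1}{2}\sum_{k=3}^\infty\tfrac{1}{(k-2)!}\E[\Lambda^{-1}D_t\langle\nabla^kT_th(W),D_t^{\otimes k}\rangle].
\end{align*}

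Substituting (I) into the displayed expression for $\E[h(F_t)\tau_t]$, the $k=1$ terms cancel exactly (coefficients $-\tfrac12$ and $+\tfrac12$), and for $k\geq3$ the coefficient becomes $\tfrac12(a_k-1/k!)=-1/(8(k-2)!)$ by the very definition $a_k=1/k!-1/(4(k-2)!)$; the $k=2$ term contributes $-\tfrac14\E[\Lambda^{-1}D_t\langle\nabla^2T_th(W),D_t^{\otimes 2}\rangle]$. Applying (II) to this residual second-order term then produces $+\tfrac18\sum_{k=3}^\infty\tfrac{1}{(k-2)!}\E[\cdots]$, which exactly cancels the remaining sum, giving $\E[h(F_t)\tau_t]=0$. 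The choice of $a_k$ is engineered precisely so that the combination of the $l=0$ and $l=2$ exchangeability identities kills the series. The main technical obstacle is the bookkeeping of coefficients together with verifying dominated convergence to justify swapping $\E$ with the infinite sum; the former is a short computation and the latter follows from the truncation $|D_t|\leq\eta_t$ and the $L^2$-bound $\|\langle\nabla^k\phi(Z)/\phi(Z),D_t^{\otimes k}\rangle\|_2\leq\sqrt{k!}\,|D_t|^k$ used already in the proof of \cref{l1}.
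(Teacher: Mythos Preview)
Your proof is correct and follows essentially the same route as the paper. Both arguments rest on the two exchangeability identities for the pair $(W,W_t)$ at tensor orders $l=0$ and $l=2$ combined with the Taylor expansion of \cref{l1}; the only difference is the order of operations. The paper starts from $\E[\Lambda^{-1}D_t\{T_th(W)+T_th(W_t)\}]=0$, expands, substitutes the $l=2$ identity to arrive at \eqref{taylor2}, and only then converts back via \eqref{deriv} to recognize $\tau_t$; you instead convert from the $\phi(Z)$ form to the $T_th$ form at the outset and then do the coefficient bookkeeping explicitly. One minor wording issue: your claim that $f_l(W_t,W)=-f_l(W,W_t)$ is not a pointwise identity (the argument of $\nabla^lT_th$ differs), but what you actually use---namely $\E[\Lambda^{-1}D_t(\langle\nabla^lT_th(W),D_t^{\otimes l}\rangle+\langle\nabla^lT_th(W_t),D_t^{\otimes l}\rangle)]=0$---is the correct consequence of exchangeability, so the argument goes through.
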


\begin{proof}
It suffices to prove $\E[\tau_th(F_t)]=0$ for any bounded measurable function $h:\mathbb{R}^d\to\mathbb{R}$. We have by exchangeability
\[
\E[\Lambda^{-1}D_t\{T_th(W)+T_th(W_t)\}]=0.
\]
Applying \cref{l1}, we obtain
\ben{\label{taylor}
\E\left[\Lambda^{-1}D_t\left\{T_th(W)+\sum_{k=0}^\infty\frac{1}{k!}\langle\nabla^kT_th(W),D_t^{\otimes k}\rangle\right\}\right]=0.
}
Now, we have again by exchangeability  
\ban{\label{043}
\E\left[\Lambda^{-1}D_t\langle\nabla^2T_th(W),D_t^{\otimes 2}\rangle\right]
=-\E\left[\Lambda^{-1}D_t\langle\nabla^2T_th(W_t),D_t^{\otimes 2}\rangle\right].
}
Hence we obtain 
\ba{
\E\left[\Lambda^{-1}D_t\langle\nabla^2T_th(W),D_t^{\otimes 2}\rangle\right]
&=-\frac{1}{2}\E\left[\Lambda^{-1}D_t\langle\nabla^2T_th(W_t)-\nabla^2T_th(W),D_t^{\otimes 2}\rangle\right]\\
&=-\frac{1}{2}\E\left[\Lambda^{-1}D_t\sum_{k=1}^\infty\frac{1}{k!}\langle\nabla^{k+2}T_th(W),D_t^{\otimes (2+k)}\rangle\right]\\
&=-\frac{1}{2}\E\left[\Lambda^{-1}D_t\sum_{k=3}^\infty\frac{1}{(k-2)!}\langle\nabla^{k}T_th(W),D_t^{\otimes k}\rangle\right].
}
Inserting this into \eqref{taylor}, we deduce
\ben{\label{taylor2}
\E\left[\Lambda^{-1}D_t\left\{2T_th(W)+D_t\cdot\nabla T_th(W)+\sum_{k=3}^\infty a_k\langle\nabla^kT_th(W),D_t^{\otimes k}\rangle\right\}\right]=0.
}
Meanwhile, we have by \eqref{deriv}
\ba{
\nabla^kT_th(w)=\frac{(-1)^k}{(e^{2t}-1)^{k/2}}\E h(e^{-t}w+\sqrt{1-e^{-2t}}Z)\frac{\nabla^k\phi(Z)}{\phi(Z)}.
} 
Inserting this into \eqref{taylor2} and using the definition of $F_t$, we obtain $2\E[\tau_th(F_t)]=0$. Hence we complete the proof.
\end{proof}

\begin{proof}[Proof of \cref{p1}]
Recall 
\be{
a_k:=\frac{1}{k!}-\frac{1}{4(k-2)!},~\kappa_p:=e\sqrt{(p/2-1)\vee1}.
}
We divide the proof into two steps. 

\noindent\textbf{Step 1}. 
We first prove the following inequality:
\ben{\label{step1}
\|\rho_t(F_t)\|_p
\leq e^{-t}\left(\|R_t\|_p+\frac{\kappa_p}{\sqrt{e^{2t}-1}}\|E_t\|_p+\frac{1}{2}\sum_{k=3}^\infty \frac{|a_k|\kappa_p^k\sqrt{k!}}{(e^{2t}-1)^{k/2}}\left\|\E[(\Lambda^{-1}D_t)\otimes D_t^{\otimes k}\mid \mcl{G}]\right\|_p\right),
}
where
\[
E_t:=E-\frac{1}{2}\E[(\Lambda^{-1}D)\otimes D1_{\{|D|>\eta_t\}}\mid\mcl{G}].
\]
We have by Lemma IV.1 of \cite{NoPeSw14} (see also Lemma 2 of \cite{Bo20})
\ben{\label{eq:score}
\rho_t(F_t)=\E\left[e^{-t}W-\frac{e^{-2t}}{\sqrt{1-e^{-2t}}}Z\mid F_t\right]
=e^{-t}\E\left[W-\frac{1}{\sqrt{e^{2t}-1}}Z\mid F_t\right].
}
Hence, \cref{lem:tau} yields
\ba{
\rho_t(F_t)
&=e^{-t}\E\left[W-\frac{1}{\sqrt{e^{2t}-1}}Z+\tau_t\mid F_t\right]\\
&=e^{-t}\E\left[-R_t+\frac{1}{\sqrt{e^{2t}-1}}E_tZ+\frac{1}{2}\sum_{k=3}^\infty a_k\E\left[\Lambda^{-1}D_t\frac{(-1)^k\langle\nabla^k \phi(Z),D_t^{\otimes k}\rangle}{\phi(Z)(e^{2t}-1)^{k/2}}\mid \mcl{G}\vee\sigma(Z)\right]\mid F_t\right].
}
Therefore, we have by the Jensen and Minkowski inequalities 
\begin{multline}\label{step1-eq1}
\|\rho_t(F_t)\|_p
\leq e^{-t}\left(\|R_t\|_p+\frac{1}{\sqrt{e^{2t}-1}}\|E_tZ\|_p\right.\\
\left.+\frac{1}{2}\sum_{k=3}^\infty \frac{|a_k|}{(e^{2t}-1)^{k/2}}\left\|\E\left[\Lambda^{-1}D_t\frac{\langle\nabla^k \phi(Z),D_t^{\otimes k}\rangle}{\phi(Z)}\mid \mcl{G}\vee\sigma(Z)\right]\right\|_p\right).
\end{multline}
Now, \cref{hyper} yields
\ba{
\E[|E_tZ|^p\mid \mcl{G}]\leq\left(\kappa_p^2\E[|E_tZ|^2\mid \mcl{G}]\right)^{p/2}
}
and
\ba{
&\E\left[\left|\E\left[\Lambda^{-1}D_t\frac{\langle\nabla^k \phi(Z),D_t^{\otimes k}\rangle}{\phi(Z)}\mid \mcl{G}\vee\sigma(Z)\right]\right|^p\mid \mcl{G}\right]\\
&\leq\left(\kappa_p^{2k}\E\left[\left|\E\left[\Lambda^{-1}D_t\frac{\langle\nabla^k \phi(Z),D_t^{\otimes k}\rangle}{\phi(Z)}\mid \mcl{G}\vee\sigma(Z)\right]\right|^2\mid \mcl{G}\right]\right)^{p/2}.
}
Note that, conditional on $\mcl{G}$, $E_tZ\sim N(0,E_tE_t^{T})$. Thus we have
\[
\E[|E_tZ|^2\mid \mcl{G}]=|E_t|^2.
\]
Meanwhile, we have by Lemma 4.3 of \cite{FaRo15}
\ba{
&\E\left[\left|\E\left[\Lambda^{-1}D_t\frac{\langle\nabla^k\phi(Z),D_t^{\otimes k}\rangle}{\phi(Z)}\mid \mcl{G}\vee\sigma(Z)\right]\right|^2\mid \mcl{G}\right]\\
&=\sum_{j=1}^d\E\left[\left|\frac{\langle\nabla^k\phi(Z),\E[(\Lambda^{-1}D_t)_jD_t^{\otimes k}\mid \mcl{G}]\rangle}{\phi(Z)}\right|^2\mid \mcl{G}\right]\\
&\leq k!\sum_{j=1}^d\left|\E[(\Lambda^{-1}D_t)_jD_t^{\otimes k}\mid \mcl{G}]\right|^2
=k!\left|\E[(\Lambda^{-1}D_t)\otimes D_t^{\otimes k}\mid \mcl{G}]\right|^2.
}
Consequently, we obtain
\ba{
\|E_tZ\|_p\leq\kappa_p\|E_t\|_p
}
and
\ba{
\left\|\E\left[\Lambda^{-1}D_t\frac{\langle\nabla^k \phi(Z),D_t^{\otimes k}\rangle}{\phi(Z)}\mid \mcl{G}\vee\sigma(Z)\right]\right\|_p
\leq\kappa_p^k\sqrt{k!}\left\|\E[(\Lambda^{-1}D_t)\otimes D_t^{\otimes k}\mid \mcl{G}]\right\|_p.
}
Inserting these estimates into \eqref{step1-eq1}, we obtain \eqref{step1}.


\noindent\textbf{Step 2}. 
We have by \cref{psd}
\ba{
|E_t|&\leq|E|+(2\lambda)^{-1}\min\{|\E[D^{\otimes2}\mid \mcl{G}]|,\eta_t^{-2}|\E[D^{\otimes2}|D|^2\mid \mcl{G}]|\}\\
&\leq|E|+(2\lambda)^{-1}\min\{2\lambda (|E|+\sqrt d),\eta_t^{-2}|\E[D^{\otimes2}|D|^2\mid \mcl{G}]|\}\\
&\leq2|E|+(2\lambda)^{-1}\min\{2\lambda\sqrt d,\eta_t^{-2}|\E[D^{\otimes2}|D|^2\mid \mcl{G}]|\}.
}
We also have by \cref{psd,tensor}
\ba{
|\E[D_t^{\otimes (k+1)}\mid \mcl{G}]|
&\leq|\E[D_t^{\otimes 2}|D_t|^{k-1}\mid \mcl{G}]|
=|\E[D^{\otimes 2}|D|^{k-1}1_{\{|D|\leq\eta_t\}}\mid \mcl{G}]|\\
&\leq\min\{\eta_t^{k-1}|\E[D^{\otimes 2}\mid \mcl{G}]|,\eta_t^{k-3}|\E[D^{\otimes 2}|D|^2\mid \mcl{G}]|\}\\
&\leq 2\lambda\eta_t^{k-1}|E|+\min\{2\lambda\eta_t^{k-1}\sqrt{d},\eta_t^{k-3}|\E[D^{\otimes 2}|D|^2\mid \mcl{G}]|\}.
}
Inserting these estimates into \eqref{step1} and noting $\kappa_p\leq e\sqrt p$ as well as $\sum_{k=3}^\infty |a_k|e^k\sqrt{k!}<\infty$, we obtain the desired result. 
\end{proof}

\subsection{Proof of \cref{t1}}\label{sec:proofthm2.1}

By Eq.(3.8) of \cite{LeNoPe15},
\ben{\label{ov-est}
\mcl{W}_p(W,Z)\leq\int_0^\infty\|\rho_t(F_t)\|_pdt,\quad p\geq 1.
}
Strictly speaking, this bound was only proved when $W$ has a bounded $C^\infty$ density $h$ with respect to $N(0,I_d)$ such that $h\geq\eta$ for some constant $\eta>0$ and $|\nabla h|$ is bounded (cf.~Eq.(32) of \cite{OV00}). However, this restriction can be removed by a similar argument as in Section 8 of \cite{Bo20}. For completeness, we give a formal proof in \cref{appendix:ov-est} of the supplementary material.

\eqref{abst-est} follows by combining \eq{ov-est} with \cref{p1}. 

Next, take $\eps>0$ arbitrarily. We have
\ba{
&\int_0^\infty e^{-t}\min\left\{\frac{\sqrt{d}}{\eta_t}, \frac{\|\E[D^{\otimes 2}|D|^2\mid \mcl{G}]\|_p}{\lambda\eta_t^{3}}\right\}dt\\
&\leq \sqrt{pd}\int_0^{\eps}\frac{e^{-t}}{\sqrt{e^{2t}-1}}dt
+\frac{p^{3/2}\|\E[D^{\otimes 2}|D|^2\mid \mcl{G}]\|_p}{\lambda}\int_{\eps}^\infty \frac{e^{-t}}{(e^{2t}-1)^{3/2}}dt.
}
Since
\ba{
\int_0^{\eps} \frac{e^{-t}}{\sqrt{e^{2t}-1}}dt
\leq\int_0^{\eps} \frac{1}{\sqrt{2t}}dt=\sqrt{2\eps}
}
and
\ba{
\int_{\eps}^\infty \frac{e^{-t}}{(e^{2t}-1)^{3/2}}dt
\leq\int_{\eps}^\infty \frac{1}{(2t)^{3/2}}dt=\frac{1}{\sqrt{2\eps}},
}
taking 
\[
\eps=\frac{p \|\E[D^{\otimes2}|D|^2\mid\mcl{G}]\|_p}{2\sqrt{d}\lambda},
\]
we obtain
\ba{
\int_0^\infty e^{-t}\min\left\{\frac{\sqrt{d}}{\eta_t}, \frac{\|\E[D^{\otimes 2}|D|^2\mid \mcl{G}]\|_p}{\lambda\eta_t^{3}}\right\}dt
\leq C p d^{1/4}\sqrt{\frac{\|\E[D^{\otimes2}|D|^2\mid\mcl{G}]\|_p}{\lambda}}.
}
Also, observe that
\[
\int_0^\infty\frac{e^{-t}}{\sqrt{e^{2t}-1}}dt=\frac{1}{2}\int_0^1 \frac{1}{\sqrt{1-x}}dx=1.
\]
Inserting these estimates into \eqref{abst-est}, we obtain \eq{027}.

\section{More proofs}\label{sec:appendix}

\subsection{Generalized exchangeable pairs}\label{appendix:generalexch}

Here we record a $p$-Wasserstein bound for generalized exchangeable pairs.
Let $\mathcal{X}$ be a general space and suppose $(X, X')$ is an exchangeable pair of $\mathcal{X}$-valued random variables. Let $W:=W(X)\in \mathbb{R}^d$ be the random vector of interest, $W':=W(X')$ and $D:=W'-W$.
Suppose there exists an antisymmetric function $G:=G(X, X')\in \mathbb{R}^d$ (i.e., $G(X, X')=-G(X', X)$ a.s.) such that
\ben{\label{f01}
\E(G\mid\sigma(X))=-(W+R).
}
Suppose the law of $W$ is approximately $N(0, I_d)$ and
we are interested in bounding
\be{
\mathcal{W}_p (W, Z).
}

The formulation \eq{f01} with $d=1$ was first proposed by \cite{Ch07} for concentration inequalities (see also \cite{Zh19} for Kolmogorov bounds). In Corollary 2.11 of \cite{Do20} for 1-Wasserstein bounds, he considered the case $d=1$, $W=\sum_{l=1}^m W_l$ and $\E[W_l'-W_l\mid X]=-\lambda_l W_l$. In this case, we can choose $G$ in \eq{f01} to be $G=\sum_{l=1}^m \frac{W_l'-W_l}{\lambda_l}$. For $d>1$, the setting of \cite{ReRo09} corresponds to $G=\Lambda^{-1} (W'-W)$. 

\begin{theorem}\label{t3}
Under the above setting,
assume that $\E|W|^p<\infty$ for some $p\geq1$ and $\E|G||D|^3<\infty$.  
Then we have
\ba{
\mathcal{W}_p(W,Z)
&\leq C\left( \int_0^\infty e^{-t}\|R_t\|_pdt
+\sqrt{p}\|E\|_p
+p \sqrt{\|\E[|G||D| \mid \sigma(X)] \|_p \|\E[|G||D|^3\mid \sigma(X)]\|_p} \right),
}
where $Z\sim N(0,I_d)$ is a $d$-dimensional standard Gaussian vector, 
\be{
R_t:=R+\E[G 1_{\{|D|>\sqrt{(e^{2t}-1)/p}\}}\mid \sigma(X)],\qquad
E:=\frac{1}{2}\E[G\otimes D\mid \sigma(X)]-I_d,
}
and $C$ is an absolute constant.
\end{theorem}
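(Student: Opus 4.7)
The plan is to adapt the proof of \cref{t1} by replacing $\Lambda^{-1}D$ throughout with the antisymmetric function $G$ (and correspondingly $\Lambda^{-1}D_t$ with $G_t:=G1_{\{|D|\leq\eta_t\}}$, where again $\eta_t=\sqrt{(e^{2t}-1)/p}$). The antisymmetry of $G$, combined with exchangeability of $(X,X')$, takes over the role played in the original proof by the exchangeability of $(W,W_t)$. Two immediate preliminary observations: $G_t$ is itself antisymmetric in $(X,X')$ since $|D|$ is invariant under the swap, and $\E[G_t\mid\sigma(X)]=-(W+R_t)$ with the $R_t$ defined in the statement of the theorem.

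The main step is the analog of \cref{lem:tau}: $\E[\tau_t\mid F_t]=0$, where
\[
\tau_t:=\E\left[G_t\left(1-\frac{\langle\nabla\phi(Z),D_t\rangle}{2\phi(Z)\sqrt{e^{2t}-1}}+\frac{1}{2}\sum_{k=3}^\infty a_k\frac{(-1)^k\langle\nabla^k\phi(Z),D_t^{\otimes k}\rangle}{\phi(Z)(e^{2t}-1)^{k/2}}\right)\mid\sigma(X)\vee\sigma(Z)\right].
\]
The identity one needs to establish is $\E[G_t\{T_th(W)+T_th(W_t)\}]=0$ for every bounded measurable $h:\IR^d\to\IR$. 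Antisymmetry of $G_t$ together with swapping $(X,X')$ yields $\E[G_tT_th(W)]=-\E[G_tT_th(W')]$; since $G_t$ vanishes on $\{|D|>\eta_t\}$ while $W_t=W'$ on $\{|D|\leq\eta_t\}$, one also has $\E[G_tT_th(W_t)]=\E[G_tT_th(W')]$, and the identity follows. The symmetrization of the second-derivative term is rescued by the same trick, because $D^{\otimes 2}$ is swap-invariant while $G_t$ picks up a sign:
\[
\E[G_t\langle\nabla^2T_th(W),D_t^{\otimes 2}\rangle]=-\E[G_t\langle\nabla^2T_th(W_t),D_t^{\otimes 2}\rangle].
\]
With these two replacements, the remainder of the proof of \cref{lem:tau} carries over verbatim.

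Following Step 1 of the proof of \cref{p1} with $G_t$ in place of $\Lambda^{-1}D_t$, using $\E[G_t\mid\sigma(X)]=-(W+R_t)$ and $\nabla\phi(z)/\phi(z)=-z$, and then applying Jensen, Minkowski, and the hypercontractive \cref{hyper}, we obtain
\[
\|\rho_t(F_t)\|_p\leq e^{-t}\left(\|R_t\|_p+\frac{\kappa_p\|E_t\|_p}{\sqrt{e^{2t}-1}}+\frac{1}{2}\sum_{k=3}^\infty\frac{|a_k|\kappa_p^k\sqrt{k!}}{(e^{2t}-1)^{k/2}}\|\E[G_t\otimes D_t^{\otimes k}\mid\sigma(X)]\|_p\right),
\]
where $E_t:=\tfrac{1}{2}\E[G_t\otimes D_t\mid\sigma(X)]-I_d$. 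For Step 2, the PSD-matrix lemmas (\cref{psd,tensor}) are replaced by the plain bound $|\E[G_t\otimes D_t^{\otimes k}\mid\sigma(X)]|\leq\E[|G||D|^k1_{\{|D|\leq\eta_t\}}\mid\sigma(X)]\leq\min\{\eta_t^{k-1}A_0,\eta_t^{k-3}B_0\}$ for $k\geq 3$, where $A_0:=\E[|G||D|\mid\sigma(X)]$ and $B_0:=\E[|G||D|^3\mid\sigma(X)]$. The truncation error $|E_t-E|\leq\tfrac{1}{2\eta_t^2}B_0$ is, after multiplication by $\kappa_p/\sqrt{e^{2t}-1}$, absorbed into the $\eta_t^{-3}\|B_0\|_p$ side of the min. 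Summing the $k\geq3$ series using $\kappa_p^k/p^{k/2}\leq e^k$ and $\sum_{k\geq3}|a_k|e^k\sqrt{k!}<\infty$, then integrating via \eqref{ov-est}, using $\int_0^\infty e^{-t}(e^{2t}-1)^{-1/2}dt=1$, and splitting the remaining $\min$-integral at $\epsilon$ of order $\sqrt p\,\|B_0\|_p/\|A_0\|_p$ exactly as at the end of the proof of \cref{t1}, produces the three stated terms $\int_0^\infty e^{-t}\|R_t\|_pdt$, $\sqrt p\|E\|_p$ and $p\sqrt{\|A_0\|_p\|B_0\|_p}$.

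The principal obstacle is the verification of the analog of \cref{lem:tau}: one must check carefully that antisymmetry of $G$ (together with the swap-invariance of $|D|$ and $D^{\otimes 2}$) faithfully substitutes for the exchangeability of $(W,W_t)$ used in the original argument. Once this is in place, the rest is bookkeeping, with the PSD-based tensor bound replaced by the coarser Cauchy--Schwarz inequality; this is precisely why the final error term here takes the $p\sqrt{\|A_0\|_p\|B_0\|_p}$ form rather than the $pd^{1/4}\sqrt{\cdot}$ form in \eqref{027}.
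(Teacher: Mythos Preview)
Your proposal is correct and follows exactly the paper's approach: replace $\Lambda^{-1}D_t$ by $G_t=G1_{\{|D|\leq\eta_t\}}$, verify the key identity $\E[G_t\{T_th(W)+T_th(W_t)\}]=0$ (and its second-order analog) via antisymmetry of $G$ together with swap-invariance of $|D|$ and $D^{\otimes2}$, and use the crude bound $|\E[\,\cdot\mid\sigma(X)]|\leq\E[|\cdot|\mid\sigma(X)]$ in place of \cref{psd,tensor}. One small caveat: for the truncation error you should record that $|E_t-E|\leq\tfrac12 A_0$ holds as well (not only $|E_t-E|\leq\tfrac{1}{2\eta_t^2}B_0$), so that $|E_t-E|\leq\tfrac12\min\{A_0,\eta_t^{-2}B_0\}$ and the term is absorbed into the \emph{whole} min---otherwise the bare $(e^{2t}-1)^{-3/2}$ contribution is not integrable near $t=0$.
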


\begin{proof}[Proof of \cref{t3}]
The proof is a straightforward modification of that of \cref{t1}. We use the notation therein. 
Let 
\be{
G_t:=G 1_{\{|D|\leq \eta_t\}}
}
We start from the identity
\be{
\E[G_t\{T_t h(W)+T_t h(W_t)\}]=0.
}
Following the proof of \cref{p1} except that we change $\Lambda^{-1} D_t$ therein by $G_t$ and use $|\E[Y_1\otimes\cdots \otimes Y_k \mid \sigma(X)]|\leq \E[|Y_1\otimes\cdots \otimes Y_k| \mid\sigma(X)]= \E[|Y_1|\cdots |Y_k| \mid \sigma(X)]$ instead of \cref{psd,tensor}, we obtain
\bes{
&\|\rho_t(F_t)\|_p\\
\leq &C e^{-t}\left(
\|R_t\|_p+\frac{\sqrt{p}}{\sqrt{e^{2t}-1}}\|E\|_p
+\min\left\{\frac{\sqrt {p}\norm{\E[|G||D| \mid \sigma(X)]}_p}{\sqrt{e^{2t}-1}}, \frac{p^{3/2}\|\E[|G||D|^3\mid \sigma(X)]\|_p}{(e^{2t}-1)^{3/2}}\right\}
\right).
}
Then, the theorem follows by optimizing the integration as in the proof of \cref{t1}.
\end{proof}

\subsection{Proof for combinatorial CLT}\label{appendix:comb}
\begin{proof}[Proof of \cref{p2}]
In this proof, we use $C$ to denote positive absolute constants, which may differ in different expressions.

\medskip

\textbf{Step 1. The exchangeable pair.}
Let $Y_{ij}=X_{ij}/B_n$ and hence, $W=\sum_{i=1}^n Y_{i\pi(i)}$.
We construct an exchangeable pair $(W, W')$ by uniformly selecting two different indices $I, J\in \{1,\dots, n\}$, independent of $\mathbb{X}$ and $\pi$, and let
\be{
W'=W+D=W-Y_{I \pi(I)}-Y_{J \pi(J)}+Y_{I \pi(J)}+Y_{J \pi(I)}.
}
Let $\mcl{G}=\sigma(\mathbb{X}, \pi)$.
It is know that (cf. Eq. (3.3) of \cite{CF15})
\ben{\label{010}
\E(W'-W|\mcl{G})=-\lambda(W+R),
}
where
\be{
\lambda=\frac{2}{n-1},\quad R=-\frac{1}{n}\sum_{i,j=1}^n Y_{ij}.
}
For $1\leq i\ne j\leq n$, let
\be{
Y^{(ij)}_\pi:=-Y_{i \pi(i)}-Y_{j \pi(j)}+Y_{i \pi(j)}+Y_{j \pi(i)}.
} 
For $t>0$ and $p\geq 2$, let $\eta_t(p)=\sqrt{(e^{2t}-1)/p}$ be as in \cref{t1}.
For any given permutation $\pi$, because of the assumption $\norm{X_{ij}}_{\psi_1}\leq b$, we have, following the same argument as in \cref{subsec:indep} for the independent case, 
\ben{\label{038}
\norm{Y^{(ij)}_\pi 1_{\{|Y^{(ij)}_\pi|>\eta_t(p)\}}}_{\psi_{1/2}}\leq C \eta_t^{-1}(p) \frac{b^2}{B_n^2}, 
}
\ben{\label{039}
\norm{(Y^{(ij)}_\pi)^2}_{\psi_{1/2}}\leq \frac{C b^2}{B_n^2},
}
\be{
\norm{(Y^{(ij)}_\pi)^4 1_{\{|Y^{(ij)}_\pi|\leq \eta_t(p)\}}}_{\psi_{1/2}}\leq C \eta^2_t(p) \frac{b^2}{B_n^2}.
}
We will apply the $p$-Wasserstein bound \eq{abst-est}, which we recall:
\be{
\mathcal{W}_p(W,Z)
\leq C\int_0^\infty e^{-t}\left( \|R_t\|_p
+\frac{\|E\|_p}{\eta_t(p)}
+\min\left\{\frac{1}{\eta_t(p)}, \frac{\|\E[D^4 1_{\{|D|\leq\eta_t(p)\}}\mid \mcl{G}]\|_p}{\lambda \eta^3_t(p)}\right\} \right)dt,
}
where
\be{
R_t:=R+\E[\lambda^{-1}D1_{\{|D|>\eta_t(p)\}}\mid\mcl{G}],\qquad
E:=\frac{1}{2}\E[\lambda^{-1}D^2\mid\mathcal{G}]-1.
}

\medskip

\textbf{Step 2. Bounding $R_t$.}
For the above exchangeable pair, we have
\be{
R_t=-\frac{1}{n}\sum_{i,j=1}^n Y_{ij}+\frac{1}{n}\sum_{1\leq i<j\leq n} Y^{(ij)}_\pi 1_{\{|Y^{(ij)}_\pi|>\eta_t(p)\}}.
}
Because of centering (i.e., $c_{i\cdot}=c_{\cdot j}=0$), we have
\be{
\frac{1}{n}\sum_{i,j=1}^n Y_{ij}=\frac{1}{n} \sum_{i,j=1}^n (Y_{ij}-\E Y_{ij}).
}
From \cref{lem:weibull} and $\norm{Y_{ij}}_{\psi_1}\leq b/B_n$, we have
\be{
\norm{\frac{1}{n}\sum_{i,j=1}^n Y_{ij}}_p\leq \frac{C b}{n  B_n} (\sqrt{pn^2}+p)\leq C(\frac{p\sqrt{n}}{B_n^2}+\frac{p^{5/2}}{B_n^2})b^2,
}
where we used $B_n^2\leq C n b^2$ from \eq{004} in the last inequality.

To deal with the second term in $R_t$, we separate $\sum_{1\leq i<j\leq n}$ into $O(n)$ sums, each sum is over a collection of $O(n)$ disjoint pairs $(i,j)$.
For example, $\{1\leq i<j\leq n\}=\cup_{l=1}^{n-1}(\mcl{I}_l^{(1)}\cup \mcl{I}_l^{(2)})$, where
\be{
\mcl{I}_l^{(1)}=\{1\leq i<j\leq n: j-i=l, i\in \{kl+1,\dots, (k+1)l\}, k\geq 0\ \text{an odd integer}\},
}
\be{
\mcl{I}_l^{(2)}=\{1\leq i<j\leq n: j-i=l, i\in \{kl+1,\dots, (k+1)l\}, k\geq 0\ \text{an even integer}\}.
}
Consider such a sum
\be{
\sum_{(i,j)\in \mcl{I}} Y^{(ij)}_\pi 1_{\{|Y^{(ij)}_\pi|>\eta_t(p)\}}.
}
Conditioning on the unordered pair $\{\pi(i), \pi(j)\}$ for all $(i,j)\in \mcl{I}$, it is a sum of $O(n)$ independent random variables, each with mean 0 and $\norm{\cdot}_{\psi_{1/2}}\leq C\eta_t^{-1}(p) b^2/B_n^2$ (cf. \eq{038}). From \cref{lem:weibull}, we obtain
\be{
\norm{\sum_{(i,j)\in \mcl{I}} Y^{(ij)}_\pi 1_{\{|Y^{(ij)}_\pi|>\eta_t(p)\}}}_p\leq C\eta_t^{-1}(p)\frac{b^2}{B_n^2} (\sqrt{pn}+p^2).
}
Combining the above bounds, we obtain
\be{
\int_0^\infty e^{-t}\norm{R_t}_p dt\leq C(\frac{p\sqrt{n}}{B_n^2}+\frac{p^{5/2}}{B_n^2})b^2.
}

\medskip

\textbf{Step 3. Bounding $E$.}
Note that
\bes{
&E:=\frac{1}{2\lambda}\E[D^2|\mcl{G}]-1\\
=&\frac{1}{2\lambda}\E[D^2|\mcl{G}]-\frac{1}{2\lambda}\E[D^2]+\frac{1}{2\lambda}\E[D^2]-1\\
=&\frac{1}{2n}\sum_{1\leq i< j\leq n}\left[ (Y^{(ij)}_\pi)^2-\E(Y^{(ij)}_\pi)^2 \right]+\frac{1}{2\lambda}\E[D^2]-1\\
=:&H_{21}+H_{22}.
}
From exchangeability and the linearity condition \eq{010}, we obtain
\bes{
H_{22}=&\frac{1}{2\lambda}\E(W'-W)^2-1=\frac{1}{2\lambda}(-2\E[(W'-W)W])-1\\
=&\E(RW)=-\frac{1}{n} \E\left[\sum_{i,j=1}^n Y_{ij} \sum_{k=1}^n Y_{k\pi(k)} \right]
=-\frac{1}{n^2} \E\left[\sum_{i,j=1}^n Y_{ij} \sum_{k,l=1}^n Y_{kl} \right].
}
From \eq{004}, we have
\be{
|H_{22}|= \frac{1}{n^2} \E(\sum_{i,j=1}^n Y_{ij})^2=\frac{1}{n^2} \Var(\sum_{i,j=1}^n Y_{ij})\leq \frac{1}{n}.
}
Now we turn to bounding $H_{21}$.
Write 
\be{
H_{21}=\frac{1}{2n}\sum_{1\leq i< j\leq n}\left[ (Y^{(ij)}_\pi)^2-\E^\pi(Y^{(ij)}_\pi)^2 \right]+\frac{1}{2n}\sum_{1\leq i< j\leq n}\left[ \E^\pi (Y^{(ij)}_\pi)^2-\E(Y^{(ij)}_\pi)^2 \right],
}
where $\E^\pi$ denotes the conditional expectation given the permutation $\pi$.
From a similar argument as in bounding $R_t$ and using \eq{039} for the first term, we obtain
\be{
\norm{\frac{1}{2n}\sum_{1\leq i< j\leq n}\left[ (Y^{(ij)}_\pi)^2-\E^\pi(Y^{(ij)}_\pi)^2 \right]}_p\leq C(\frac{\sqrt{pn}}{B_n^2}+\frac{p^2}{B_n^2})b^2.
}
Now we turn to bounding the second term of $H_{21}$.
Let 
\be{
\xi_{ij}:=  \frac{ \E^\pi(Y^{(ij)}_\pi)^2-\E(Y^{(ij)}_\pi)^2}{n^{3/2} b^2/B_n^2 },
}
and hence,
\be{
\frac{1}{2n}\sum_{1\leq i< j\leq n}\left[ \E^\pi (Y^{(ij)}_\pi)^2-\E(Y^{(ij)}_\pi)^2 \right]=\frac{n^{1/2}b^2}{2B_n^2}\sum_{1\leq i< j\leq n} \xi_{ij}.
}
In the remainder of this step, we show that with $V=\sum_{1\leq i< j\leq n} \xi_{ij}$ and if $p\geq 2$, we have
\ben{\label{014}
\norm{V}_p\leq C(\sqrt{p}+\frac{p}{\sqrt{n}}),
}
and hence
\be{
\int_0^\infty e^{-t} \frac{\norm{E}_p}{\eta_t(p)}dt\leq C(\frac{p\sqrt{n}}{B_n^2}+\frac{p^{5/2}}{B_n^2})b^2,
}
where we used $B_n^2\leq Cnb^2$ again to simplify the upper bound.
To prove \eq{014}, let $h(t)=\E e^{tV}$. We have
\ben{\label{011}
h'(t)=\sum_{1\leq i< j\leq n}\E \xi_{ij} e^{tV}=\sum_{1\leq i< j\leq n}\frac{1}{n(n-1)}\sum_{1\leq k\ne l\leq n}\E\{ \E[\xi_{ij} e^{tV}|\pi(i)=k, \pi(j)=l]\}.
}
It is known that we can define a new permutation $\pi_{ijkl}$ such that it differs from $\pi$ only in absolutely bounded finite number of arguments and (cf. (3.14) of \cite{CF15})
\ben{\label{012}
\mcl{L}(\pi_{ijkl})=\mcl{L}(\pi|\pi(i)=k, \pi(j)=l).
}
Let
\be{
V_{ijkl}=\sum_{1\leq u< v\leq n}\frac{1}{n^{3/2} b^2/B_n^2} \Big[ \E^{\pi_{ijkl}}(Y^{(uv)}_{\pi_{ijkl}})^2   -\E(Y^{(uv)}_\pi)^2 \Big].
}
From its construction and the bound $|\xi_{ij}|\leq C/n^{3/2}$, we have
\ben{\label{013}
|V_{ijkl}-V|\leq C n \frac{1}{n^{3/2}}=\frac{C}{\sqrt{n}}.
}
From \eq{011}, \eq{012} and \eq{013}, we have, for absolutely bounded $|t|/\sqrt{n}$,
\bes{
h'(t)=&\sum_{1\leq i< j\leq n}\frac{1}{n(n-1)}\sum_{1\leq k\ne l\leq n} \E \Big\{ [ e^{tV_{ijkl}}-e^{tV}]\\
&\times \frac{\E(-Y_{i k}-Y_{j l}+Y_{i l}+Y_{j k})^2-\E(-Y_{i \pi(i)}-Y_{j \pi(j)}+Y_{i \pi(j)}+Y_{j \pi(i)})^2}{n^{3/2} b^2/B_n^2} \Big\}
}
and
\be{
|h'(t)|\leq C n^2 \frac{1}{n^2} n^2 \frac{1}{\sqrt{n}} |t| \E[ e^{tV}] \frac{1}{n^{3/2}}\leq C|t| h(t).
}
This implies 
\ben{\label{015}
h(t)=Ee^{tV}\leq e^{Ct^2}\  \text{for absolutely bounded}\  |t|/\sqrt{n}.
}
\eqref{015} means that $V$ is sub-gamma with variance factor $C$ and scale parameter $1/\sqrt{n}$ in the sense of \cite[Section 2.4]{BLM13}. Then, by Theorem 2.3 in \cite{BLM13} and Stirling's formula, 
\[
\norm{V}_p\leq C(\sqrt{p}+p/\sqrt{n}),\quad p\geq 2
\]
which is \eq{014}.

\medskip

\textbf{Step 4. Bounding $D^4$.}
We have
\bes{
&\lambda^{-1}\E[D^4 1_{\{|D|\leq \eta_t(p)\}}|\mcl{G}]=\frac{1}{n}\sum_{1\leq i<j\leq n} [(Y^{(ij)}_\pi)^4 1_{\{|Y^{(ij)}_\pi|\leq \eta_t(p)\}}]\\
=&\frac{1}{n}\sum_{1\leq i<j\leq n} \left[(Y^{(ij)}_\pi)^4 1_{\{|Y^{(ij)}_\pi|\leq \eta_t(p)\}}-\E^\pi (Y^{(ij)}_\pi)^4 1_{\{|Y^{(ij)}_\pi|\leq \eta_t(p)\}}\right]\\
&+ \frac{1}{n}\sum_{1\leq i<j\leq n} \E^\pi(Y^{(ij)}_\pi)^4 1_{\{|Y^{(ij)}_\pi|\leq \eta_t(p)\}}.
}
Following a similar argument as in the previous two steps, we obtain
\bes{
&\int_0^\infty e^{-t} \min\left\{\frac{1}{\eta_t(p)}, \frac{\|\E[D^4 1_{\{|D|\leq\eta_t(p)\}}\mid \mcl{G}]\|_p}{\lambda \eta^3_t(p)}\right\} dt\\
\leq &C\int_0^\infty e^{-t}\frac{p\sqrt n/B_n^2+p^{5/2}/B_n^2}{\sqrt{e^{2t}-1}} b^2 dt
+\int_0^\infty e^{-t}\min\left\{\frac{\sqrt{p}}{\sqrt{e^{2t}-1}},\frac{Cp^{3/2} n b^4}{B_n^4(e^{2t}-1)^{3/2}}\right\}dt\\
\leq &C(\frac{p\sqrt{n}}{B_n^2}+\frac{p^{5/2}}{B_n^2})b^2.
}

Combining all the above bounds proves \eq{006}.
\end{proof}

\subsection{Proof for moderate deviations on Wiener chaos}\label{appendix:wiener}

Throughout this subsection, $C_q$ denotes a positive constant, which depends only on $q$ and may be different in different expressions. 
For the proof, in addition to Theorem \ref{t4}, we use \cite{La06}'s sharp moment estimates for Gaussian homogeneous sums. For later use in \cref{appendix:homo}, we state the following generalization obtained in \cite{AdWo15}. 
\begin{lemma}[\cite{AdWo15}, Theorem 1.3]\label{lem:latala}
Let $G$ be a standard Gaussian vector in $\mathbb R^n$. Then, for every polynomial $Q:\mathbb R^n\to\mathbb R$ of degree at most $q$ and every $p\geq2$,
\[
C_q^{-1}\sum_{r=1}^q\sum_{\mcl J\in \Pi_r}p^{|\mcl J|/2}\|\E\nabla^rQ(G)\|_{\mcl J}
\leq\|Q(G)-\E Q(G)\|_p\leq C_q\sum_{r=1}^q\sum_{\mcl J\in \Pi_r}p^{|\mcl J|/2}\|\E\nabla^rQ(G)\|_{\mcl J},
\]
where $\nabla^rQ$ is defined by \eqref{eq:nabla} and we regard $\E\nabla^rQ(G)$ as an element of $(\mathbb R^n)^{\odot r}$.
\end{lemma}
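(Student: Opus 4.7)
The lemma is quoted verbatim from \cite[Theorem 1.3]{AdWo15}, which itself generalises Lata\l a's sharp moment comparison for Gaussian chaos in \cite{La06} to arbitrary polynomials. In the present paper it is used as a black-box ingredient, so I would not reprove it; the following is a sketch of how one could.

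First, I would reduce to the single-chaos case. Expanding $Q(G)=\sum_{r=0}^{q}I_r(f_r)$ in its Wiener--It\^o chaos decomposition, with $f_r\in(\mathbb R^n)^{\odot r}$, a short integration-by-parts computation shows
\[
\E\nabla^rQ(G)=r!\,f_r,
\]
because lower-chaos terms are polynomials of degree $<r$ and are killed by $\nabla^r$, while higher-chaos terms have mean zero after taking $r$ derivatives. This rewrites both sides of the desired inequality purely in terms of the kernels $f_r$. Combining $L^2$-orthogonality of different chaoses with Gaussian hypercontractivity then yields $\|\sum_{r=1}^{q}I_r(f_r)\|_p\asymp_q\max_{1\leq r\leq q}\|I_r(f_r)\|_p$ for $p\geq2$, and, since the max and the sum differ only by a factor $\leq q$, the lemma reduces to the sharp single-chaos bound
\[
\|I_r(f)\|_p\asymp_r\sum_{\mcl J\in\Pi_r}p^{|\mcl J|/2}\|f\|_{\mcl J},\qquad p\geq 2,
\]
which is Lata\l a's theorem.

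The single-chaos bound is where the real work lives. For the upper direction, I would decouple $I_r(f)$ to the tetrahedral $r$-linear form $\sum_{i_1,\dots,i_r}f(i_1,\dots,i_r)g^{(1)}_{i_1}\cdots g^{(r)}_{i_r}$ in $r$ independent copies of $G$, and then use the variational description of the $L^p$-norm as (up to constants) the expected supremum of a Gaussian process indexed by tuples of unit tensors in appropriate dual spaces. Generic chaining \`a la Talagrand applied to this process decomposes the supremum into contributions indexed by partitions $\mcl J\in\Pi_r$, and $p^{|\mcl J|/2}\|f\|_{\mcl J}$ is exactly the scale-$\mcl J$ chaining increment. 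For the lower direction one tests against near-extremal product tensors realising $\|f\|_{\mcl J}$, reducing to the joint moments of a product of $|\mcl J|$ independent Gaussian chaoses, which are sharp by a direct computation.

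The main obstacle is the chaining step: identifying the correct admissible partition tree and bounding the associated entropy are intricate combinatorial tasks, and constitute the technical heart of \cite{La06,AdWo15}. In the present paper this machinery is imported wholesale rather than redone.
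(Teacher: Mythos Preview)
Your assessment is correct: the paper does not prove this lemma at all but simply cites it verbatim from \cite[Theorem~1.3]{AdWo15} and uses it as a black box, exactly as you say. Your sketch of the underlying argument (chaos decomposition to reduce to single chaoses, then Lata{\l}a's sharp bound via decoupling and chaining) is a faithful high-level outline of the Lata{\l}a/Adamczak--Wolff machinery, and goes well beyond what the paper itself provides.
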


The next result follows from \cref{lem:latala} via a standard approximation argument. 
\begin{lemma}\label{lem:wiener}
For any $h\in\mf H^{\odot q}$ and $p\geq2$,
\ben{\label{wiener-ineq}
\|I_q(h)\|_p\leq C_q\sum_{\mcl J\in \Pi_q}p^{|\mcl J|/2}\|h\|_{\mcl J}.
}
\end{lemma}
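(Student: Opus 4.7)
The plan is to reduce \eqref{wiener-ineq} to the finite-dimensional Gaussian setting and apply \cref{lem:latala}. Both sides of \eqref{wiener-ineq} depend continuously on $h\in\mf H^{\odot q}$ with respect to the $\mf H^{\otimes q}$-norm: the left-hand side by Gaussian hypercontractivity on the $q$-th Wiener chaos (e.g.~\cite[Corollary 2.8.14]{NoPe12}), and each $\|h\|_{\mcl J}$ on the right because $\|h\|_{\mcl J}\leq\|h\|_{\mf H^{\otimes q}}$ by Cauchy--Schwarz. A standard density argument then reduces the problem to the case where $h\in\mathrm{Sym}(V^{\otimes q})$ for some finite-dimensional subspace $V=\mathrm{span}(e_1,\dots,e_n)\subset\mf H$ spanned by an orthonormal family.

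For such $h$, set $G_i:=X(e_i)$; then $G=(G_1,\dots,G_n)$ is a standard Gaussian vector in $\mathbb R^n$ and $I_q(h)=Q(G)$ for a uniquely determined polynomial $Q$ of degree $q$ with $\E Q(G)=0$. Applying \cref{lem:latala} to this $Q$ yields
\[
\|I_q(h)\|_p \;\leq\; C_q\sum_{r=1}^q\sum_{\mcl J\in\Pi_r}p^{|\mcl J|/2}\|\E\nabla^rQ(G)\|_{\mcl J}.
\]
The crux of the proof is therefore to identify the symmetric tensors $\E\nabla^rQ(G)$. I claim that
\[
\E\nabla^qQ(G) = q!\,h, \qquad \E\nabla^rQ(G) = 0 \quad \text{for } 1\leq r<q.
\]
Granting this, the bound from \cref{lem:latala} collapses to the $r=q$ term and delivers \eqref{wiener-ineq} after absorbing the factor $q!$ into $C_q$.

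To prove the claim, pair $\E\nabla^rQ(G)$ with an arbitrary rank-one tensor $u_1\otimes\cdots\otimes u_r$ (where $u_i\in V$) and apply Gaussian integration by parts iteratively to obtain
\[
\langle\E\nabla^rQ(G),\,u_1\otimes\cdots\otimes u_r\rangle \;=\; \E\bigl[\,I_q(h)\cdot I_r(u_1\widetilde\otimes\cdots\widetilde\otimes u_r)\,\bigr],
\]
where $\widetilde\otimes$ denotes symmetrized tensor product and $I_r$ is the $r$-th Wiener--Itô integral with respect to $X$ (equivalently, the iterated divergence of the symmetrized tensor). Orthogonality of Wiener chaoses of distinct orders forces this expectation to vanish when $r<q$; for $r=q$, the isometry $\E[I_q(h)I_q(f)]=q!\,\langle h,f\rangle_{\mf H^{\otimes q}}$ for symmetric $f$, combined with the symmetry of $h$, identifies the right-hand side with $q!\,\langle h,u_1\otimes\cdots\otimes u_q\rangle$. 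Since $u_1,\dots,u_r$ were arbitrary, the claim follows.

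The only genuinely non-routine ingredient is the derivative--divergence duality invoked above, but this is a textbook application of Stein's identity in finite dimensions. The main piece of bookkeeping to watch is the match between the ordinary gradient $\nabla^r$ used in \cref{lem:latala} and the Malliavin derivative implicit in the integration-by-parts step; on the finite-dimensional subspace $V$ the two coincide, which legitimizes the reduction.
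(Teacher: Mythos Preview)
Your proof is correct and follows essentially the same route as the paper: reduce to a finite-dimensional subspace by density and hypercontractivity, write $I_q(h)=Q(G)$ for a polynomial $Q$, apply \cref{lem:latala}, and then show $\E\nabla^rQ(G)=0$ for $r<q$ and $\E\nabla^qQ(G)=q!\,h$. The only cosmetic difference is that the paper establishes this last identity directly via the Malliavin derivative formula $D^rI_q(h_n)=\tfrac{q!}{(q-r)!}I_{q-r}(\cdot)$ and $\E I_{q-r}=0$, whereas you obtain it dually through integration by parts and chaos orthogonality; these are two sides of the same standard fact.
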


\begin{proof}
We prove the claim when $\mf H$ is infinite-dimensional; the finite-dimensional case is similar and easier. 
Let $(e_i)_{i=1}^\infty$ be an orthonormal basis of $\mf H$. Then $(e_{i_1}\otimes\cdots\otimes e_{i_q})_{i_1,\dots,i_q=1}^\infty$ is an orthonormal basis of $\mf H^{\otimes q}$. 
For every $n\in\mathbb N$, define
\[
h_n:=\sum_{i_1,\dots,i_q=1}^na_{i_1,\dots,i_q}e_{i_1}\otimes\dots\otimes e_{i_q},
\]
where $a_{i_1,\dots,i_q}=\langle h,e_{i_1}\otimes\cdots\otimes e_{i_q}\rangle_{\mf H^{\otimes q}}$. Then we have $\|h_n-h\|_{\mf H^{\otimes q}}\to0$ as $n\to\infty$. By hypercontractivity (cf.~Theorem 2.7.2 of \cite{NoPe12}), this implies $\|I_q(h_n)-I_q(h)\|_p\to0$ as $n\to\infty$. 
Also, it is straightforward to check that $\|h_n-h\|_{\mcl J}\to0$ as $n\to\infty$ for all $\mcl J\in\Pi_q$. 
Therefore, it suffices to prove \eqref{wiener-ineq} with $h$ replaced by $h_n$. 

By Theorems 2.7.7 and 2.7.10 in \cite{NoPe12}, we have $I_q(h_n)=Q(X(e_1),\dots,X(e_n))$ for some polynomial $Q:\mathbb R^n\to\mathbb R$ of degree at most $q$. 
Then, for any $j_1,\dots,j_r\in[n]$,
\[
\partial_{j_1,\dots,j_r}Q(X(e_1),\dots,X(e_n))=\langle D^rI_q(h_n),e_{j_1}\otimes\cdots\otimes e_{j_r}\rangle_{\mf H^{\otimes r}}. 
\]
Since $\E D^rI_q(h_n)=0$ if $r<q$ and $D^qI_q(h_n)=q! h_n$, we obtain
\[
\E\nabla^r Q(X(e_1),\dots,X(e_n))
=\begin{cases}
0 & \text{if }r<q,\\
q!A & \text{if }r=q,
\end{cases}
\]
where $A=(a_{i_1,\dots,i_q})_{1\leq i_1,\dots,i_q\leq n}$. Regarding $A$ as an element of $(\mathbb R^n)^{\odot q}$, we can easily check that $\|A\|_{\mcl J}=\|h_n\|_{\mcl J}$ for all $\mcl J\in\Pi_q$. Thus, the desired result follows from \cref{lem:latala}.
\end{proof}

\begin{proof}[Proof of Theorem \ref{thm:wiener}]
According to Theorem \ref{t4}, it suffices to prove
\ben{\label{eq:wiener-wass}
\mathcal W_p(W,Z)\leq C_q\max_{r\in[q-1]}\max_{\mcl J\in\Pi_{2q-2r}}p^{(1+|\mcl J|)/2}\|f\wt\otimes_rf\|_{\mcl J}
}
for all $p\geq2$. 
By Proposition 3.7 in \cite{NPS14}, 
\[
\tau(w)=\E[\langle-DL^{-1}W,DW\rangle_{\mathfrak{H}}\mid W=w],\quad w\in\mathbb{R},
\] 
gives a Stein kernel for $W$ (in the sense that it satisfies Eq.(2.3) in \cite{LeNoPe15} with $\nu$ the law of $W$). 
Hence, using the Stein kernel bound for $p$-Wasserstein distance (cf.~Proposition 3.4(ii) in \cite{LeNoPe15}), we obtain
\[
\mathcal W_p(W,Z)\leq C\sqrt p\|\tau(W)-1\|_p.
\] 
By Eq.(5.2.2) in \cite{NoPe12}, 
\ba{
\tau(W)=\frac{1}{q}\| DW\|_{\mathfrak{H}}^2
=1+q\sum_{r=1}^{q-1}(r-1)!\binom{q-1}{r-1}^2I_{2q-2r}(f\wt\otimes_rf). 
}
Thus, by Minkowski's inequality and \cref{lem:wiener},
\ba{
\|\tau(W)-1\|_p\leq C_q\sum_{r=1}^{q-1}\sum_{\mcl J\in\Pi_{2q-2r}}p^{|\mcl J|/2}\|f\wt\otimes_rf\|_{\mcl J}.
}
Consequently, we obtain \eqref{eq:wiener-wass}.
\end{proof}

\subsection{Proof for homogeneous sums}\label{appendix:homo}

Throughout this section, $C$ denotes a positive absolute constant and $C_q$ denotes a positive constant depending only on $q$, respectively. Note that their values may be different in different expressions. Also, given a function $g:[n]^q\to\mathbb R$, we write
\[
\|g\|=\sqrt{\sum_{i_1,\dots,i_q=1}^ng(i_1,\dots,i_q)^2}.
\]

We will frequently use the following inequality throughout the proof.  
\begin{lemma}[\cite{AdWo15}, Theorem 1.4]\label{lem:aw-est}
Let $X=(X_1,\dots,X_n)$ be a random vector with independent components. Suppose that there is a constant $K>0$ such that $\|X_i\|_{\psi_2}\leq K$ for all $i=1,\dots,n$. 
Then, for every polynomial $Q:\mathbb R^n\to\mathbb R$ of degree at most $q$ and every $p\geq2$,
\[
\|Q(X)-\E Q(X)\|_p\leq C_q\sum_{r=1}^qK^r\sum_{\mcl J\in \Pi_r}p^{|\mcl J|/2}\|\E\nabla^rQ(X)\|_{\mcl J}.
\]
\end{lemma}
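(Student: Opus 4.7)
My plan is to prove this bound by induction on the degree $q$ of the polynomial, propagating sub-Gaussian tails via an exponential Efron-Stein moment inequality and a careful accounting of partitions of $[r]$. For the base case $q = 1$, write $Q(X) = a_0 + \sum_{i=1}^n a_i X_i$, so that $Q(X) - \E Q(X) = \sum_i a_i(X_i - \E X_i)$. Each $X_i - \E X_i$ is centered sub-Gaussian with $\|\cdot\|_{\psi_2} \leq 2K$, so a standard Khintchine-type inequality for independent sub-Gaussian sums yields $\|Q(X) - \E Q(X)\|_p \leq CK\sqrt{p}\,\|a\|_2$. Since $\Pi_1 = \{\{\{1\}\}\}$, $\E \nabla Q(X) = a$, and $\|a\|_{\{\{1\}\}} = \|a\|_2$, this matches the claim.

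For the inductive step, I would apply an exponential Efron-Stein moment inequality (cf.\ Theorem 15.4 of \cite{BLM13}): letting $X^{(i)}$ replace $X_i$ by an independent copy $X_i'$ and $\E_i'$ denote integration over $X_i'$ only,
\[
\|Q(X) - \E Q(X)\|_p \leq C\sqrt{p}\, \bbbnorm{\biggl(\sum_{i=1}^n \E_i'[(Q(X) - Q(X^{(i)}))^2]\biggr)^{1/2}}_p.
\]
Since $Q$ is a polynomial in $X_i$ of degree at most $q$, one has $Q(X) - Q(X^{(i)}) = (X_i - X_i')\,P_i(X, X_i')$, where $P_i$ is a polynomial of degree $\leq q - 1$. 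The sub-Gaussian factor $X_i - X_i'$ contributes $CK\sqrt{p}$ upon taking norms, and the residual inner polynomial can be controlled by the induction hypothesis applied to $P_i$, whose tensor derivatives are related to those of $Q$ by an additional differentiation.

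The main obstacle will be the combinatorial bookkeeping. Each iteration of the Efron-Stein step introduces one additional block (the coordinate $i$ being averaged) into the partition $\mathcal J$, and one must verify that after at most $q$ iterations every partition $\mathcal{J} \in \Pi_r$ for $r = 1, \ldots, q$ arises with the correct prefactor $p^{|\mathcal{J}|/2}K^r$ and the correct mixed-injective norm $\|\E \nabla^r Q(X)\|_{\mathcal{J}}$. The deterministic tensor $\E\nabla^r Q(X)$ (rather than evaluation at a fixed point) should emerge from the nested conditional expectations $\E_i'$, but keeping the constant dependent only on $q$ through the recursive expansion—controlling the polynomial explosion of partitions that appear at each step and matching them to the mixed-injective norms on the right-hand side—is the delicate point and likely requires either a recursion on pairs $(q,r)$ or an intermediate Hoeffding-type decomposition to isolate each order $r$ before applying the moment bound.
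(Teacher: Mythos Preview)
The paper does not prove this lemma; it is quoted verbatim as Theorem~1.4 of \cite{AdWo15} and used as a black box, so there is no in-paper argument to compare against.

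On your sketch itself: the inductive strategy via an Efron--Stein moment inequality is broadly in the spirit of \cite{AdWo15}, but there is a concrete gap in the iteration. After one Efron--Stein step you obtain
\[
\|Q(X)-\E Q(X)\|_p \leq C\sqrt p\,\bbnorm{\bbklr{\textstyle\sum_i \E_i'\bigl[(X_i-X_i')^2 P_i(X,X_i')^2\bigr]}^{1/2}}_p,
\]
and the quantity inside the outer norm is the square root of a polynomial, not a polynomial. Your assertion that ``the sub-Gaussian factor contributes $CK\sqrt p$ and the residual inner polynomial can be controlled by the induction hypothesis applied to $P_i$'' does not go through as written: the $P_i$ sit inside $\bigl(\sum_i\cdots\bigr)^{1/2}$, so the scalar induction hypothesis cannot be applied to each $P_i$ separately without either losing a factor of $\sqrt n$ or first establishing a vector-valued version of the bound. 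What Adamczak and Wolff actually do is iterate the discrete gradient to form the tensors of higher-order differences, compare moments to the Gaussian case, and then invoke Lata{\l}a's estimate (stated here as \cref{lem:latala}) to read off the mixed injective norms $\|\E\nabla^rQ(X)\|_{\mcl J}$. Your closing remarks (``recursion on pairs $(q,r)$'', ``Hoeffding-type decomposition'') point toward the right objects, but the passage from the iterated $\ell_2$ structure to those mixed injective norms is precisely the substance of the proof in \cite{AdWo15} and does not fall out of the Efron--Stein inequality alone.
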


\begin{proof}[Proof of \cref{prop:homo}]
First, note that $\mcl M(f)\leq\|f\|^2=1/q!\leq1/2$. Hence $|\log\mcl M(f)|\geq\log2$ and $p\mcl M(f)\leq p\sqrt{\mcl M(f)}\leq1$. 
\medskip

\textbf{Step 1. The exchangeable pair.}
Let $X^*=(X^*_1,\dots,X^*_n)$ be an independent copy of $X:=(X_1,\dots,X_n)$. 
Also, let $I\sim \text{Unif}[n]$ be an index independent of $X$ and $X^*$. 
Define $X'=(X'_1,\dots,X'_n)$ by
\be{
X_{i}'=
\begin{cases}
X_{i}^*, & \text{if}\ i=I,\\
X_{i}, & \text{otherwise}.
\end{cases}
}
Then we set
\[
W'=\sum_{i_1,\dots,i_q=1}^nf(i_1,\dots,i_q)X'_{i_1}\cdots X'_{i_q}.
\]
It is easy to check $\mathcal{L}(X, X')=\mathcal{L}(X', X)$; hence, $\mathcal{L}(W, W')=\mathcal{L}(W', W)$. 
Moreover, 
\ba{
D:=W'-W
&=\sum_{\begin{subarray}{c}
i_1,\dots,i_q=1\\
\exists r:i_r=I
\end{subarray}}^nf(i_1,\dots,i_q)(X'_I-X_I)\prod_{r=1:i_r\neq I}^qX_{i_r}\\
&=q(X_I'-X_I)Q_I(X),
}
where, for every $i=1,\dots,n$, $Q_i$ is an $n$-variate polynomial defined as
\[
Q_i(x_1,\dots,x_n):=\sum_{i_2,\dots,i_{q}=1}^nf(i,i_2,\dots,i_{q})x_{i_2}\cdots x_{i_{q}}.
\]
Hence
\ba{
\E[D\mid X]=-\frac{q}{n} W.
}
Therefore, by \cref{c1}
\ben{\label{homo-est}
\mcl{W}_p(W,Z)\leq C\sqrt{p} \norm{E}_p+C p \sqrt{\frac{n}{q}\norm{\E[D^4\mid X]}_p}
=:H_1+H_2,
}
where 
\[
E=\frac{n}{2q}\E[D^2\mid X]-1.
\]

\medskip

\textbf{Step 2. Bounding $H_1$.} Observe that
\ba{
\frac{n}{2q}\E[D^2\mid X]&=\frac{q}{2}\sum_{i=1}^n(1+X_i^2)Q_i(X)^2.
}
Define an $n$-variate polynomial $Q$ as
\[
Q(x_1,\dots,x_n)=\frac{q}{2}\sum_{i=1}^n(1+x_i^2)Q_i(x_1,\dots,x_n)^2.
\]
Observe that $Q$ has total degree $2q$ and degree 2 in $x_i$ for every $i\in[n]$; the latter follows from the fact that $f$ is vanishing on diagonals. Using the latter property, one can easily verify that, with $G\sim N(0,I_n)$, $\E Q(X)=\E Q(G)$ and $\E\nabla^rQ(X)=\E\nabla^rQ(G)$ for all $r=1,\dots,2q$. 
Hence, by Lemmas \ref{lem:latala} and \ref{lem:aw-est},
\ben{\label{homo-E-gauss}
H_1\leq C_q\sqrt pK^{2q}\norm{Q(G)-1}_p.
}
Let $e_1,\dots,e_n$ be the standard basis of $\mathbb R^n$. Without loss of generality, we may assume that $G_i=\mathbf G(e_i)$ $(i=1,\dots,n)$ for some isonormal Gaussian process $\mathbf G$ over $\mf H=\mathbb R^n$. Then, for every $i=1,\dots,n$, we have
\[
Q_i(G)=I_{q-1}(\mathbf f_i),
\]
where $I_q$ denotes the $q$-th multiple Wiener--It\^ointegral with respect to $\mathbf G$ and
\[
\mathbf f_i:=\sum_{i_2,\dots,i_{q}=1}^nf(i,i_2,\dots,i_{q})e_{i_2}\otimes\cdots\otimes e_{i_q}.
\]
Thus we obtain
\ba{
Q(G)-1
&=\frac{q}{2}\sum_{i=1}^n(1+G_i^2)I_{q-1}(\mathbf f_i)^2-1\\
&=\frac{q}{2}\sum_{i=1}^n(G_i^2-1)I_{q-1}(\mathbf f_i)^2
+\left\{q\sum_{i=1}^nI_{q-1}(\mathbf f_i)^2-1\right\}
=:H_{11}+H_{12}.
}

To evaluate $H_{11}$, observe that $G_i^2-1=I_2(e_i^{\otimes2})$ by Theorem 2.7.7 in \cite{NoPe12}. Also, by the product formula for multiple Wiener--It\^ointegrals (cf.~Theorem 2.7.10 in \cite{NoPe12}), 
\be{
I_{q-1}(\mathbf f_i)^2=\sum_{r=0}^{q-1}r!\binom{q-1}{r}^2I_{2q-2-2r}(\mathbf f_i\wt{\otimes}_r\mathbf f_i).
}
Using the product formula again and noting that $f(i,i_2,\dots,i_q)=0$ if $i_r=i$ for some $r$ as well as $e_i\cdot e_j=0$ if $i\ne j$, we obtain
\ben{\label{homo-E-I-expression}
H_{11}=\frac{q}{2}\sum_{r=0}^{q-1}r!\binom{q-1}{r}^2I_{2q-2r}\left(\sum_{i=1}^ne_i^{\otimes2}\wt\otimes(\mathbf f_i\wt{\otimes}_r\mathbf f_i)\right).
}
Let $r\in\{0,1,\dots,q-1\}$ be fixed. By \cref{lem:wiener},
\ben{\label{eq:mwi-est}
\lnorm{I_{2q-2r}\left(\sum_{i=1}^ne_i^{\otimes2}\wt\otimes(\mathbf f_i\wt{\otimes}_r\mathbf f_i)\right)}_p
\leq C_q\sum_{\mcl J\in\Pi_{2q-2r}}p^{|\mcl J|/2}\lnorm{\sum_{i=1}^ne_i^{\otimes2}\wt\otimes(\mathbf f_i\wt{\otimes}_r\mathbf f_i)}_{\mcl J}.
}
Observe that 
\ba{
\lnorm{\sum_{i=1}^ne_i^{\otimes2}\wt\otimes(\mathbf f_i\wt{\otimes}_r\mathbf f_i)}_{\{1\},\dots,\{2q-2r\}}
&\leq\sup_{u\in\mathbb R^n:|u|\leq1}\sum_{i=1}^nu_i^2\norm{\mathbf f_i\wt{\otimes}_r\mathbf f_i}_{\mf H^{\otimes(2q-2r-2)}}
}
and
\ba{
\lnorm{\sum_{i=1}^ne_i^{\otimes2}\wt\otimes(\mathbf f_i\wt{\otimes}_r\mathbf f_i)}_{\mcl J}
\leq\lnorm{\sum_{i=1}^ne_i^{\otimes2}\wt\otimes(\mathbf f_i\wt{\otimes}_r\mathbf f_i)}_{\mf H^{\otimes(2q-2r)}}
\leq\sqrt{\sum_{i=1}^n\norm{\mathbf f_i\wt{\otimes}_r\mathbf f_i}_{\mf H^{\otimes(2q-2r-2)}}^2}
}
for any $\mcl J\in\Pi_{2q-2r}$. 
By the Cauchy--Schwarz inequality,
\ba{
\norm{\mathbf f_i\wt{\otimes}_r\mathbf f_i}_{\mf H^{\otimes(2q-2r-2)}}
\leq\sum_{i_2,\dots,i_q=1}^nf(i,i_2,\dots,i_q)^2.
}
Hence we obtain
\ba{
\lnorm{\sum_{i=1}^ne_i^{\otimes2}\wt\otimes(\mathbf f_i\wt{\otimes}_r\mathbf f_i)}_{\{1\},\dots,\{2q-2r\}}
&\leq\mcl M(f)
}
and
\ba{
\lnorm{\sum_{i=1}^ne_i^{\otimes2}\wt\otimes(\mathbf f_i\wt{\otimes}_r\mathbf f_i)}_{\mcl J}
\leq\sqrt{\mcl M(f)\sum_{i_1,\dots,i_q=1}^nf(i_1,\dots,i_q)^2}
=\sqrt{\frac{1}{q!}\mcl M(f)}
}
for any $\mcl J\in\Pi_{2q-2r}$. Inserting these estimates into \eqref{eq:mwi-est}, we deduce
\[
\lnorm{I_{2q-2r}\left(\sum_{i=1}^ne_i^{\otimes2}\wt\otimes(\mathbf f_i\wt{\otimes}_r\mathbf f_i)\right)}_p
\leq C_q\left(p^{q-r-1/2}\sqrt{\mcl M(f)}+p^{q-r}\mcl M(f)\right).
\]
Combining this bound with \eqref{homo-E-I-expression} and $p\mcl M(f)\leq1$, we obtain
\ben{\label{homo-I-est}
\|H_{11}\|_p\leq C_qp^{q-1/2}\sqrt{\mcl M(f)}.
}

To evaluate $H_{12}$, observe that $I_{q-1}(\mathbf f_i)=q^{-1}\mathbf D I_q(\mathbf f)\cdot e_i$ for every $i=1,\dots,n$, where $\mathbf D$ denotes the Malliavin derivative with respect to $\mathbf G$ and
\[
\mathbf f:=\sum_{i_1,\dots,i_{q}=1}^nf(i_1,\dots,i_{q})e_{i_1}\otimes\cdots\otimes e_{i_q}.
\]
Hence
\ba{
H_{12}=q^{-1}\sum_{i=1}^n(\mathbf D I_q(\mathbf f)\cdot e_i)^2-1=q^{-1}\norm{\mathbf D I_q(\mathbf f)}_{\mf H}^2-1.
}
Therefore, by the proof of \cref{thm:wiener},
\be{
\|H_{12}\|_p\leq C_q\sum_{r=1}^{q-1}\sum_{\mcl J\in\Pi_{2q-2r}}p^{|\mcl J|/2}\|\mathbf f\wt\otimes_r\mathbf f\|_{\mcl J}
\leq C_qp^{q-1}\max_{r\in[q-1]}\|f\otimes_rf\|,
}
where, for every $r\in[q]$, the function $f\otimes_rf:[n]^{2q-2r}\to\mathbb R$ is defined as
\[
f\otimes_rf(i_1,\dots,i_{2q-2r})=\sum_{j_1,\dots,j_r=1}^nf(i_1,\dots,i_{q-r},j_1,\dots,j_r)f(i_{q-r+1},\dots,i_{2q-2r},j_1,\dots,j_r).
\]
Combining this with Lemma 2.1 in \cite{Ko22},
we obtain
\ben{\label{homo-II-est}
\|H_{12}\|_p\leq C_qp^{q-1}\sqrt{|\E W^4-3|+M^q\mcl M(f)}.
}
By \eqref{homo-E-gauss}, \eqref{homo-I-est} and \eqref{homo-II-est}, we conclude
\ben{\label{homo-h1-est}
H_1\leq 
C_qp^qK^{2q}\sqrt{|\E W^4-3|+M^q\mcl M(f)}.
}

%
%

\medskip

\textbf{Step 3. Bounding $H_2$.} First, by \cref{lem:aw-est}
\ba{
\norm{Q_i(X)}_s
\leq C_qK^{q-1}s^{(q-1)/2}\sqrt{\Inf_i(f)}
}
for any $i\in[n]$ and $s\geq2$, where 
\[
\Inf_i(f):=\sum_{i_2,\dots,i_q=1}^nf(i,i_2,\dots,i_q)^2.
\]
Hence we have (cf.~Lemma A.4 in \cite{Ko22})
\[
P(|Q_i(X)|\geq t)\leq C_q\exp\left(-\left(\frac{t}{C'_qK^{q-1}\sqrt{\Inf_i(f)}}\right)^{2/(q-1)}\right)
\]
for all $t>0$, where $C_q'>0$ is a constant depending only on $q$. 
Let 
\[
\delta_i:=C'_qK^{q-1}\sqrt{\Inf_i(f)}|pq\log\mcl M(f)|^{(q-1)/2}.
\]
Then, by Lemma 6.1 in \cite{Ko22},
\be{
\E[|Q_i(X)|^s1_{\{|Q_i(X)|>\delta_i\}}]\leq C_q\left(1+\frac{2s-2/(q-1)}{s-2/(q-1)}\right)\{s(q-1)\}^{s(q-1)/2}\delta_i^s\mcl M(f)^{pq}
}
for any $s>2/(q-1)$. Since $2/(q-1)\leq2$, we can apply this inequality with $s=4p$ and then obtain
\ben{\label{eq:q-tail}
\norm{Q_i(X)^41_{\{|Q_i(X)|>\delta_i\}}}_p\leq C_qp^{2(q-1)}\delta_i^4\mcl M(f)^{q}.
}
Now we bound $\frac{n}{q}\E[D^4\mid X]$ as
\besn{\label{homo-h2-decomp}
\frac{n}{q}\E[D^4\mid X]
&=q^3\sum_{i=1}^n\E[(X'_i-X_i)^4\mid X]Q_i(X)^4\\
&\leq q^3\sum_{i=1}^n\E[(X'_i-X_i)^4\mid X]\delta_i^4
+q^3\sum_{i=1}^n\E[(X'_i-X_i)^4\mid X]Q_i(X)^41_{\{|Q_i(X)|>\delta_i\}}\\
&=:H_{21}+H_{22}.
}
We bound $\|H_{21}\|_p$ as
\ba{
\|H_{21}\|_p\leq q^3\sum_{i=1}^n\E[(X'_i-X_i)^4]\delta_i^4+q^3\lnorm{\sum_{i=1}^n\{\E[(X'_i-X_i)^4\mid X]-\E(X_i'-X_i)^4\}\delta_i^4}_p.
}
For the first term, we have
\ba{
q^3\sum_{i=1}^n\E[(X'_i-X_i)^4]\delta_i^4\leq C_q K^4\sum_{i=1}^n\delta_i^4
\leq C_qp^{2q-2}K^{4q}\mcl M(f)|\log\mcl M(f)|^{2(q-1)}.
}
To bound the second term, note that $\|\E[(X'_i-X_i)^4\mid X]\|_{\psi_{1/2}}\leq CK^4$. Therefore, by \cref{lem:weibull},
\ba{
&\lnorm{\sum_{i=1}^n\{\E[(X'_i-X_i)^4\mid X]-\E(X_i'-X_i)^4\}\delta_i^4}_p\\
&\leq CK^4\left(\sqrt{p\sum_{i=1}^n\delta_i^8}+p^2\max_{1\leq i\leq n}\delta_i^4\right)\\
&\leq C_qK^{4q}(p^{2q-3/2}\mcl M(f)^{3/2}+p^{2q}\mcl M(f)^2)|\log\mcl M(f)|^{2(q-1)}\\
&\leq C_qK^{4q}p^{2q-2}\mcl M(f)|\log\mcl M(f)|^{2(q-1)},
}
where in the second inequality we used $\sum_{i=1}^n \Inf_i(f)=1/q!$ and the last inequality follows from the condition $p\mcl M(f)\leq p\sqrt{\mcl M(f)}\leq1$. All together, we obtain
\ben{\label{homo-III-est}
\|H_{21}\|_p\leq C_qK^{4q}p^{2q-2}\mcl M(f)|\log\mcl M(f)|^{2(q-1)}.
}
In the meantime, noting that $(X_i,X_i')$ and $Q_i(X)$ are independent, we have
\ba{
\norm{H_{22}}_p\leq q^3 \sum_{i=1}^n\norm{(X'_i-X_i)^4}_p\norm{Q_i(X)^41_{\{|Q_i(X)|>\delta_i\}}}_p.
}
Using \eqref{eq:q-tail} and $p\sqrt{\mcl M(f)}\leq1$, we obtain
\ben{\label{homo-IV-est}
\norm{H_{22}}_p\leq C_qK^4p^{2q}\mcl M(f)^{q}\sum_{i=1}^n\delta_i^4
\leq C_qK^{4q}p^{2q-2}\mcl M(f)|\log\mcl M(f)|^{2(q-1)}.
}
Combining \eqref{homo-III-est} and \eqref{homo-IV-est} with \eqref{homo-h2-decomp} gives
\ben{\label{homo-h2-est}
H_2\leq Cp\sqrt{H_{21}+H_{22}}
\leq C_qK^{2q}p^{q}\sqrt{\mcl M(f)}|\log\mcl M(f)|^{q-1}.
}
By \eqref{homo-est}, \eqref{homo-h1-est} and \eqref{homo-h2-est}, we complete the proof. 
\end{proof}

\subsection{Proof for moderate deviations in multi-dimensions}\label{appendix:multi}

\begin{proof}[Proof of \cref{thm:multip}]
The proof is almost identical to the arguments leading to \eq{034}, except that we view $Y_i^{\otimes 2}$ as a $d^2$-vector, use $\norm{Y_i^{\otimes 2}}_{H.S.}=|Y_i|^2$ and \cref{lem:weibull} for independent random vectors in $\mathbb{R}^{d^2}$. The factor $d^{1/4}$ comes from the $\sqrt{d}$ term in \eq{abst-est}.
\end{proof}

\begin{proof}[Proof of \cref{thm:multiMD}]
In this proof, we use $C:=C_{A, \alpha, B_1, B_2}$ to denote positive constants, which depend only on $\alpha$, $A$, $B_1$ and $B_2$ and may be different in different expressions.
Let $f(x):=f(x;d)$ denote the density of the chi-distribution with $d$ degrees of freedom, i.e.,
\be{
f(x)=\frac{1}{\kappa(d)} x^{d-1} e^{-x^2/2},\quad \kappa(d):=2^{(d/2)-1}\Gamma(d/2).
}
Note that $\log(\kappa(d))\leq C d \log d$.
For $d\geq 2$ and $x>0$, we have
\be{
\int_x^\infty y^{d-1} e^{-y^2/2}dy=x^{d-2} e^{-x^2/2}+\int_x^{\infty} (d-2) y^{d-3} e^{-y^2/2}dy\geq x^{d-2} e^{-x^2/2}.
}
Therefore,
\ben{\label{031}
\frac{f(x)}{P(|Z|>x)}\leq x.
}
First we prove the claim when $\Delta<1/e$.
Set 
\be{
p=|\log \Delta|+\log (\kappa(d))+\frac{x^2}{2}, \quad \eps=A p^\alpha \Delta e.
} 
Because of the condition $|\log \Delta|\leq p_0/4$, $\log(\kappa(d))\leq p_0/4$ and $x\leq \sqrt{p_0}$, we have $p\leq p_0$.
From the upper bound on $\mcl{W}_p(W, Z)$, we can couple $W$ and $Z$ such that $\norm{W-Z}_p\leq A p^\alpha \Delta$.
We have
\ba{
P(|W|>x)
&\leq P(|Z|>x-\eps)+P(|W-Z|>\eps)\\
&=P(|Z|>x)+P(x-\eps<|Z|\leq x)+P(|W-Z|>\eps).
}
Since
\ba{
P(x-\eps<|Z|\leq x)
=\int_{(x-\eps)\vee 0}^x f(z)dz
}
and
\ba{
P(|W-Z|>\eps)\leq \eps^{-p}\norm{W-Z}^p_p \leq (A p^\alpha \Delta/\eps)^p=e^{-p}=\Delta\frac{1}{\kappa(d)} e^{-x^2/2},
}
we obtain
\ba{
P(|W|>x)
&\leq P(|Z|>x)+\int_{(x-\eps)\vee 0}^x f(z)dz+\Delta\frac{1}{\kappa(d)} e^{-x^2/2}.
}
Similarly, we deduce
\ba{
P(|Z|>x)
&=P(|Z|>x+\eps)+P(x<|Z|\leq x+\eps)\\
&\leq P(|W|>x)+P(|W-Z|>\eps)+P(x<|Z|\leq x+\eps)\\
&\leq P(|W|>x)+\int_{x}^{x+\eps} f(z)dz+\Delta\frac{1}{\kappa(d)} e^{-x^2/2}.
}
Consequently, we obtain
\be{
|P(|W|>x)-P(|Z|>x)|\leq\int_{(x-\eps)\vee 0}^{x+\eps} f(z)dz+\Delta\frac{1}{\kappa(d)} e^{-x^2/2}.
}
Note that \eq{029} implies $d (\log d) \Delta^{2/(2\alpha+1)}\leq C$.
Therefore, using $x\leq \Delta^{-1/(2\alpha+1)}$, we have
\besn{\label{032}
\eps&\leq C\Delta(|\log\Delta|^{\alpha}+\log^\alpha (\kappa(d))+x^{2\alpha})
\leq C\Delta(|\log\Delta|^{\alpha}+\Delta^{-2\alpha/(2\alpha+1)})\\
&\leq C\Delta^{1-2\alpha/(2\alpha+1)}
=C\Delta^{1/(2\alpha+1)}.
}
Note that $\eps\leq C$ and for $0\leq x\leq \Delta^{-1/(2\alpha+1)}$, we have $x\eps\leq C$.
Also note that \eq{029} implies $d\Delta |\log \Delta|^\alpha\leq C d \Delta^{2/(2\alpha+1)}\leq C$ if $\alpha>1/2$.
If $x\geq 1$, 
we have, from \eq{031},
\bes{
&\frac{\int_{(x-\eps)\vee 0}^{x+\eps} f(z)dz}{P(|Z|>x)}\leq C\eps\frac{f(x)}{P(|Z|>x)} e^{x\eps} (\frac{x+\eps}{x})^{d-1}\leq C x\eps e^{d\eps/x}\\
\leq & C x\eps \exp\left\{C\left( d\Delta|\log \Delta|^\alpha + d(d\log d)^\alpha\Delta+d\Delta+d \Delta^{2/(2\alpha+1)}1_{\{\alpha>1/2\}}\right)     \right\}\\
\leq & C x\eps,
}
where we used $1\leq x\leq \Delta^{-1/(2\alpha+1)}$, \eq{029} and \eq{030}.
Therefore,
\be{
\left|\frac{P(|W|>x)}{P(|Z|>x)}-1\right|\leq Cx\eps+\Delta \leq  C(1+x)(|\log\Delta|+d \log d+x^2)^{\alpha}\Delta.
}
If $x<1$, the conclusion follows from $1/P(|Z|>x)\leq C$ and 
\be{
|P(|W|>x)-P(|Z|>x)|\leq C(\eps+\Delta).
}

It remains to prove \eq{036} when $\Delta\geq 1/e$. In this case, we have $x\leq e$ and thus $1/P(|Z|>x)$ is bounded. Hence \eq{036} holds with a sufficiently large $C_{A, \alpha, B_1, B_2}$.
\end{proof}

\subsection{Proof for local dependence}\label{appendix:local}

\begin{proof}[Proof of \cref{thm:local3}]
We adapt the proof of \cref{t1} and use the notation therein. Let $\mcl{G}=\sigma(X_1,\dots, X_n)$.
Let
$I$ be a uniform random index from $\{1,\dots, n\}$ and independent of everything else. Let $D=-n^{-1/2}\sum_{j\in A_I} X_j$. Because $|X_j|\leq b_n$ and $|A_I|\leq \theta_1$, we have $|D|\leq \theta_1 b_n/\sqrt{n}$.
Because $X_i$ is independent of $\{X_j: j\notin A_i\}$, we have 
\be{
\E[(-\sqrt{n}X_I) T_t h(W+D)]=0,
}
and hence,
\be{
\E\left[ (-\sqrt{n}X_I) \left\{ T_t h(W)+\langle \nabla T_t h(W), D\rangle+\sum_{k=2}^\infty \frac{1}{k!} \langle \nabla^k T_t h(W), D^{\otimes k} \rangle    \right\}   \right]=0.
}
Let 
\be{
\tau_t=\E\left[(-\sqrt{n}X_I)\left(1-\frac{\langle\nabla \phi(Z),D\rangle}{\phi(Z)\sqrt{e^{2t}-1}}
+\sum_{k=2}^\infty \frac{1}{k!}\frac{(-1)^k\langle\nabla^k \phi(Z),D^{\otimes k}\rangle}{\phi(Z)(e^{2t}-1)^{k/2}}\right)\mid \mcl{G}\vee\sigma(Z)\right].
}
Following the same argument leading to \eq{step1-eq1}, 
we have
\begin{multline*}
\|\rho_t(F_t)\|_p
\leq e^{-t}\left(\frac{1}{\sqrt{e^{2t}-1}}\|E Z\|_p\right.\\
\left.+\sum_{k=2}^\infty \frac{1}{k! (e^{2t}-1)^{k/2}}\left\|\E\left[(-\sqrt{n} X_I)\frac{\langle\nabla^k \phi(Z),D^{\otimes k}\rangle}{\phi(Z)}\mid \mcl{G}\vee\sigma(Z)\right]\right\|_p\right),
\end{multline*}
where $E=\E[(-\sqrt{n}X_I)\otimes D|\mcl{G}]-I_d$.
Following the same argument as in the proof of \cref{p1}, with $\gamma_t=\sqrt{e^{2t}-1}$,
the first term is bounded by $\frac{C e^{-t}}{\gamma_t} \sqrt{p} \norm{E}_{p}$.
The second term with $k=2$, is bounded by
\be{
 \frac{C e^{-t} p \theta_1^2 b_n^3}{\sqrt{n} \gamma_t^2}. 
}
The second term with $k\geq 3$, if $\gamma_t\geq \theta_1 b_n \sqrt{p/n}$, is bounded by
\be{
\frac{Ce^{-t} p^{3/2}\theta_1^3 b_n^4}{n\gamma_t^3}. 
}
Note that
\be{
d=\E(W^T W)\leq \frac{n\theta_1}{n}b_n^2=\theta_1 b_n^2.
}
Let $t_0$ be such that $\sqrt{e^{2t_0}-1}=\theta_1 b_n\sqrt{p/n}$ and assume it is $\leq c$ for a sufficiently small constant $c>0$ as in the condition \eq{037}. Then, with $W_0:=e^{-t_0} W+\sqrt{1-e^{-2t_0}}Z$, we have
\bes{
\mcl{W}_p(W_0, Z)\leq& \int_{t_0}^\infty \norm{\rho_t(F_t)}_p dt\\
\leq& C\sqrt{p}\norm{E}_p+      C\int_{t_0}^\infty \frac{e^{-t}}{e^{2t}-1} dt\frac{p\theta_1^2 b_n^3}{\sqrt{n}}+C\int_{t_0}^\infty \frac{e^{-t}}{(e^{2t}-1)^{3/2}} dt \frac{p^{3/2} \theta_1^3 b_n^4}{n}  \\
\leq& C\sqrt{p}\norm{E}_p+\frac{Cp \theta_1^2 b_n^3\log n}{\sqrt{n}}. 
}
This implies
\bes{
\mcl{W}_p(W, Z)=&e^{t_0} \mcl{W}_p(e^{-t_0}W, e^{-t_0}Z)\\
\leq& e^{t_0} \mcl{W}_p(e^{-t_0}W, W_0) +e^{t_0} \mcl{W}_p(W_0, Z)+e^{t_0} \mcl{W}_p(Z, e^{-t_0}Z)\\
\leq & C\sqrt{p}\norm{E}_p+\frac{Cp \theta_1^2 b_n^3\log n}{\sqrt{n}}+\frac{C p d^{1/2}  \theta_1 b_n}{\sqrt{n}}\\
\leq & C\sqrt{p}\norm{E}_p+\frac{Cp \theta_1^2 b_n^3\log n}{\sqrt{n}},
}
where we used $\norm{\sqrt{1-e^{-2t_0}}Z}_p\leq C\theta_1 b_n\sqrt{p/n} \sqrt{dp}$ (cf. \cref{hyper}) in the second inequality and $d\leq \theta_1 b_n^2$ in the last inequality.
Note that
\be{
E=\frac{1}{n}\sum_{i=1}^n \sum_{j\in A_i} (X_i\otimes X_j)-I_d.
}
Denote the $(u,v)$-entry of the $d\times d$ matrix $E$ by $E_{uv}$. Then, for $p\geq 2$,
\be{
\norm{E}_p=\left[\E(\sum_{u,v=1}^d E^2_{uv})^{p/2}\right]^{1/p}\leq d\max_{u,v} \norm{E_{uv}}_p.
}
Write $X_i=(X_{i1},\dots, X_{id})^T$ and, from $\E(E_{uv})=0$,
\bes{
E_{uv}=&\frac{1}{n}\sum_{i=1}^n X_{iu} \sum_{j\in A_i} X_{jv} -\delta_{uv}=\frac{2(\theta_1 \theta_2)^{1/2} b_n'^2}{\sqrt{n}} \sum_{i=1}^n \sum_{j\in A_i} \left[\frac{X_{iu} X_{jv} -\E(X_{iu} X_{jv})}{2(\theta_1 \theta_2)^{1/2} b_n'^2 \sqrt{n}} \right]\\
=:&\frac{2(\theta_1 \theta_2)^{1/2} b_n'^2}{\sqrt{n}} \sum_{i=1}^n \sum_{j\in A_i} X^{uv}_{ij}=:\frac{2(\theta_1 \theta_2)^{1/2} b_n'^2}{\sqrt{n}} V_{uv}.
}
In the remainder of this proof, we show that if $2\leq p\leq \theta_1 n /\theta_2$ as in the condition \eq{037}, then
\ben{\label{041}
\norm{V_{uv}}_p\leq C\sqrt{p},
}
and hence conclude \eq{025}.

Let $V^{(ij)}_{uv}=V_{uv}-\sum_{(k,l)\in B_{ij}}X^{uv}_{kl}$.
Then $|V_{uv}-V_{uv}^{(ij)}|\leq \sqrt{\theta_2}/ \sqrt{\theta_1 n}$, and, for bounded $|t|\sqrt{\theta_2}/ \sqrt{\theta_1 n}$ and using the local dependence assumption in the first equation below,
\be{
h'(t)=\sum_{i=1}^n \sum_{j\in A_i} \E X^{uv}_{ij} [e^{tV_{uv}}-e^{t V_{uv}^{(ij)}}] \leq C|t| h(t), \ \text{where}\ h(t)=Ee^{tV_{uv}}.
}
This implies 
\ben{\label{042}
h(t)=Ee^{tV_{uv}}\leq e^{Ct^2}\  \text{for bounded}\  |t|\sqrt{\theta_2}/ \sqrt{\theta_1 n}.
}
\eqref{042} means that $V_{uv}$ is sub-gamma with variance factor $C$ and scale parameter $\sqrt{\theta_2}/\sqrt{\theta_1n}$ in the sense of \cite[Section 2.4]{BLM13}. Then, by Theorem 2.3 in \cite{BLM13} and Stirling's formula, 
\[
\norm{V_{uv}}_p\leq C(\sqrt{p}+p\sqrt{\theta_2}/\sqrt{\theta_1n})\leq C\sqrt p,
\]
where the last inequality follows by \eqref{037}. This proves \eq{041}.
\end{proof}

\begin{proof}[Proof of \cref{thm:local2}]
We use $C$ to denote positive absolute constants, which may differ in different expressions.
We first do truncation.
Let $\tilde X_{ij}:=X_{ij} 1_{\{|X_{ij}|\leq b\log n\}} -\E X_{ij} 1_{\{|X_{ij}|\leq  b\log n\}}$, $\tilde X_i=(\tilde X_{i1},\dots, \tilde X_{id})^T$, $W^{(l)}=n^{-1/2}\sum_{i\in g_l} X_i$, $\tilde W^{(l)}=n^{-1/2}\sum_{i\in g_l} \tilde X_i$ and $\tilde W=\sum_{l=1}^L \tilde W^{(l)}=n^{-1/2}\sum_{i=1}^n \tilde X_i$.

From $\norm{X_{ij}}_{\psi_1}\leq b$ and \cite[Lemma 5.4]{Ko21}, we have, for every positive integer $p$,
\bes{
&\E|n^{-1/2}(X_{ij}-\tilde X_{ij})|^p\leq n^{-p/2} 2^{p-1} \E[|X_{ij}|^p 1_{\{|X_{ij}|>b\log n\}}]\\
\leq & n^{-p/2}2^{p-1} p! 2 e^{-b\log n/b} (b\log n+b)^p\\
=& \frac{p!}{2} \left(\frac{2b\log n+2b}{\sqrt{n}}\right)^{p-2} \cdot \frac{8 (b\log n+b)^2}{ n^2}.
}
Using the independence of the $X$'s within each group $g_l$ and the Bernstein inequality (\cite[Theorem 2.10]{BLM13}), we obtain
\be{
P(W^{(l)}_j-\tilde W^{(l)}_j>\sqrt{2v_0t}+c_0 t)\vee P(-(W^{(l)}_j-\tilde W^{(l)}_j)>\sqrt{2v_0t}+c_0 t)\leq e^{-t},\quad \forall\ t>0,
}
where $v_0=8(b\log n+b)^2/n$, $c_0=(2b\log n+2b)/\sqrt{n}$ and $W_j$ denotes the $j$th component of $W$.
Therefore, by \cite[Theorem 2.3]{BLM13} we obtain, for $p\geq 1$,
\be{
\norm{W^{(l)}_j-\tilde W^{(l)}_j}_{p}\leq C(\sqrt{pv_0}+p c_0)\leq  \frac{C p b \log n}{\sqrt{n}},
}
\be{
\norm{W_j-\tilde W_j}_{p}\leq \sum_{l=1}^L \norm{W^{(l)}_j-\tilde W^{(l)}_j}_{p} \leq \frac{C p L b\log n}{\sqrt{n}},
}
and
\be{
\norm{W-\tilde W}_{p}\leq \sum_{j=1}^d\norm{W_j-\tilde W_j}_{p}\leq \frac{C p d L b\log n}{\sqrt{n}}.
}
Using the triangle inequality, we have
\be{
\mcl{W}_p(W, Z)\leq \mcl{W}_p(W, \tilde W)+\mcl{W}_p(\tilde W, \tilde Z)+\mcl{W}_p(\tilde Z, Z),
}
where $\tilde Z\sim N(0, \Var(\tilde W))$. 
Note that
\bes{
&|\E[\tilde W_j \tilde W_k]-\E[W_j W_k]|\\
=& \left| \frac{1}{n}\sum_{i=1}^n\sum_{i'\in A_i} \left\{\E[X_{ij}1_{\{|X_{ij}|\leq  b\log n\}} X_{i'k}1_{\{|X_{i'k}|\leq  b\log n\}}] -  \E [X_{ij} X_{i'k}] \right\}   \right|\\
=& \left| \frac{1}{n}\sum_{i=1}^n\sum_{i'\in A_i} \left\{-\E[X_{ij}1_{\{|X_{ij}|\leq  b\log n\}} X_{i'k}1_{\{|X_{i'k}|>  b\log n\}}] -  \E [X_{ij}1_{\{|X_{ij}|>  b\log n\}} X_{i'k}] \right\}   \right|\\
\leq& 2 b \log n \max_i\sum_{i'\in A_i} \E[|X_{i'k}| 1_{\{|X_{i'k}|\geq  b\log n\}}]\\
&+\max_i\sum_{i'\in A_i} \sqrt{\E[X_{ij}^21_{\{|X_{ij}|> b\log n\}}] \E[X_{i'k}^21_{\{|X_{i'k}|> b\log n\}}]}\\
\leq & \frac{C \theta_1 b^2 \log^2 n}{n},
}
where we used \cite[Lemma 5.4]{Ko21} in the last inequality.
This implies, from the $p$-Wasserstein bound via Stein kernels by \cite[Proposition 3.4(ii)]{LeNoPe15},
\be{
\mcl{W}_p(\tilde Z, Z)\leq \frac{C p^{1/2} d \theta_1 b^2 \log^2 n}{n}\ \text{for all}\ p\geq 2.
}
By the eigenvalue stability inequality $|\lambda_i(A+B)-\lambda_i(A)|\leq \norm{B}_{op}$, the eigenvalues of $\Var(\tilde W)$ differ from 1 by at most $C d\theta_1 b^2\log^2 n/n$.
Therefore, assuming $d^{1/2}\theta_1^{1/2} b\log n/\sqrt{n}$ to be sufficiently small as in the condition of \cref{thm:local2}, subject to the truncation error $C p d L b \log n/\sqrt{n}+C p^{1/2} d\theta_1 b^2 \log^2 n/n$, and by a renormalization, the problem reduces to the setting of \cref{thm:local2} with the additional assumption that 
\be{
|X_{ij}|\leq b_n':=C  b\log n,\quad
|X_i|\leq b_n:=C d^{1/2} b \log n\ \text{for all}\ i, j.
}
Using \cref{thm:local3} and $1\leq C \theta_1 b^2$, there exist positive absolute constants $c$ and $C$ such that, if
\be{
2\leq  p\leq \min\{\frac{\theta_1}{\theta_2}, \frac{c}{\theta_1^2 b_n^2}\} n,
}
then
\be{
\mcl{W}_p(W, Z)\leq C p \Delta_d.
}
The upper bounds \eq{026} and \eq{033} then follows from \cref{t4} and \cref{thm:multiMD} respectively by a similar argument as in the proof of \cref{thm:local1}.
\end{proof}

\section{Supplementary material}

\subsection{Proof of \cref{thm:qf}}

\cref{thm:qf} is a straightforward consequence of the following $p$-Wasserstein bound and \cref{t4}:

\begin{proposition}\label{prop:qf}
Under the assumptions of \cref{thm:qf}, for any $2\leq p\leq2\|F\|_{op}^{-2/3}$,
\[
\mcl W_p(W,Z)\leq CpK^4\|F\|_{op},
\]
where $C$ is a positive absolute constant.
\end{proposition}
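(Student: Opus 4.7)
The plan is to parallel the proof of \cref{prop:homo} specialized to $q=2$, replacing the influence-based estimates with sharper operator-norm estimates tailored to the quadratic setting.

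\textbf{Setup.} I would reuse the exchangeable-pair construction from the proof of \cref{prop:homo}: pick $I\sim\mathrm{Unif}[n]$ independent of everything else and an independent copy $X_I^*$ of $X_I$, set $X_i'=X_i$ for $i\ne I$ and $X_I'=X_I^*$, and put $W':=\sum_{i,j}f(i,j)X_i'X_j'$. Then $D:=W'-W=2(X_I'-X_I)Q_I(X)$ with $Q_i(x):=\sum_j f(i,j)x_j$, and $\E[D\mid X]=-(2/n)W$, so \cref{c1} applies with $\lambda=2/n$ and $R=0$ to give
\[
\mcl W_p(W,Z)\le C\bigl(\sqrt p\,\|E\|_p+p\sqrt{(n/2)\,\|\E[D^4\mid X]\|_p}\bigr),
\]
where $E=\sum_i(1+X_i^2)Q_i(X)^2-1$. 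It then suffices to bound each of the two terms by $CpK^4\|F\|_{op}$ for $2\le p\le 2\|F\|_{op}^{-2/3}$.

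\textbf{Bounding $\|E\|_p$.} I would expand $E=2(X^\top F^2X-\tfrac12)+\sum_i(X_i^2-1)Q_i(X)^2$. The first summand is a centered quadratic form in bounded variables; a Hanson--Wright-type bound (obtained from the $r=2$ case of \cref{lem:aw-est}) together with $\|F^2\|_{HS}\le\|F\|_{op}\|F\|_{HS}=\|F\|_{op}/\sqrt 2$ and $\|F^2\|_{op}=\|F\|_{op}^2$ yields $CK^2(\sqrt p\,\|F\|_{op}+p\,\|F\|_{op}^2)$. The second summand is a polynomial of degree at most $2$ in each coordinate (because $Q_i$ does not involve $x_i$, by vanishing diagonals), so its first- and second-order gradient expectations agree with those of the Gaussian surrogate; \cref{lem:aw-est,lem:latala} would then reduce matters to estimating $\|\sum_i(G_i^2-1)Q_i(G)^2\|_p$ for $G\sim N(0,I_n)$. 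Using the Wiener--It\^o product formula with $G_i^2-1=I_2(e_i^{\otimes2})$, $Q_i(G)^2=\|\mathbf f_i\|^2+I_2(\mathbf f_i^{\otimes2})$, and the critical cancellations $\langle e_i,\mathbf f_i\rangle=f(i,i)=0$ and $e_i^{\otimes2}\wt\otimes_1\mathbf f_i^{\otimes2}=0$, the Gaussian expression collapses to $I_2(g_1)+I_4(g_2)$ with $g_1:=\sum_i\|\mathbf f_i\|^2 e_i^{\otimes2}$ and $g_2:=\sum_i e_i^{\otimes2}\wt\otimes\mathbf f_i^{\otimes2}$. \cref{lem:wiener} would then bound both $L^p$-norms by weighted sums of mixed injective tensor norms of $g_1$ and $g_2$, each controlled in terms of $\|F\|_{op}$ by repeated application of $\sum_i\langle\mathbf f_i,u\rangle^2=u^\top F^2 u\le\|F\|_{op}^2$; summed over partitions, these estimates combine with the constraint $p\le 2\|F\|_{op}^{-2/3}$ to give $\sqrt p\,\|E\|_p\le CpK^4\|F\|_{op}$.

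\textbf{Bounding $\|\E[D^4\mid X]\|_p$.} From $|X_i|,|X_I^*|\le K$ one has $\E[(X_I^*-X_I)^4\mid X]\le CK^4$, so $\E[D^4\mid X]\le(CK^4/n)\sum_i Q_i(X)^4$. I would split this into mean and remainder: the mean satisfies $\E\sum_i Q_i(X)^4\le CK^4\sum_i\|\mathbf f_i\|^4\le CK^4\|F\|_{op}^2\|F\|_{HS}^2$, while for the centered remainder (a polynomial of degree $4$) \cref{lem:aw-est} together with the Hermite expansion $I_1(h)^4=3\|h\|^4+6\|h\|^2 I_2(h^{\otimes2})+I_4(h^{\otimes4})$ reduces matters to bounding $\|I_2(h_1)\|_p$ and $\|I_4(h_2)\|_p$ with $h_1:=\sum_i\|\mathbf f_i\|^2\mathbf f_i^{\otimes2}$ and $h_2:=\sum_i\mathbf f_i^{\otimes4}$; both are handled by \cref{lem:wiener} and injective-norm bounds such as $\|h_1\|_{op}\le\|F\|_{op}^4$ and $\|h_2\|_{\{1\},\{2\},\{3\},\{4\}}\le\|F\|_{op}^4$, proved by the same operator-norm trick. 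The net outcome is $\|\E[D^4\mid X]\|_p\le CK^8\|F\|_{op}^2/n$ over the stated range of $p$, yielding $p\sqrt{(n/2)\|\E[D^4\mid X]\|_p}\le CpK^4\|F\|_{op}$.

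\textbf{Main obstacle.} The hardest part is not any single step but the careful bookkeeping: for each partition $\mcl J$ of $[2]$ or $[4]$ arising in \cref{lem:wiener}, one must pair the correct power of $\|F\|_{op}$ with $p^{|\mcl J|/2}$ so that the product remains below $CK^4\|F\|_{op}$ throughout $p\le 2\|F\|_{op}^{-2/3}$. The inequality $\|F\|_{op}\le\|F\|_{HS}=1/\sqrt 2<1$ is essential here: it allows higher powers of $\|F\|_{op}$ to defeat the super-linear factors in $p$ produced by partitions $\mcl J$ with small $|\mcl J|$. This is precisely the \emph{tedious} bookkeeping the authors allude to, and it mirrors the analogous reasoning in the proof of \cref{prop:homo} with operator-norm inputs replacing the influence-based ones.
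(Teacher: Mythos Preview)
Your approach is essentially the paper's: the same exchangeable pair, the same application of \cref{c1}, and the same decomposition $E=E_1+E_2$ with $E_2=2X^\top F^2X-1$ and $E_1=\sum_i(X_i^2-1)Q_i(X)^2$. For $E_2$ your Hanson--Wright style bound via \cref{lem:aw-est} is what the paper does. For $E_1$ you take the Gaussian-comparison route of \cref{prop:homo} (pass to $G$ via \cref{lem:aw-est,lem:latala}, then expand in Wiener chaos), whereas the paper computes $\E\nabla^r\wt Q(X)$ and the mixed injective norms directly, case by case; both are valid since $\wt Q$ has degree at most $2$ in each coordinate, so \emph{all} gradient expectations (not just the first two) match the Gaussian ones.

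There is, however, a genuine gap in your treatment of $H_2$. The polynomial $P(x)=\sum_i Q_i(x)^4$ has degree up to $4$ in each coordinate, so the Gaussian comparison does \emph{not} transfer all of $\E\nabla^rP(X)$ to $\E\nabla^rP(G)$. Concretely, $\E\partial_jP(X)=4\sum_i f(i,j)\E Q_i(X)^3=4\sum_i f(i,j)\sum_k f(i,k)^3\E X_k^3$, which need not vanish, whereas $\E\partial_jP(G)=0$. Hence the claim that \cref{lem:aw-est} together with the Hermite expansion ``reduces matters to bounding $\|I_2(h_1)\|_p$ and $\|I_4(h_2)\|_p$'' is incomplete: the $r=1$ term in \cref{lem:aw-est} survives and must be bounded separately. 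The fix is easy and is exactly what the paper does: write $\sum_i\E\nabla Q_i^4(X)=4Fv$ with $v_i=\E Q_i(X)^3$, bound $|v|^2\le CK^6\sum_i\|\mathbf f_i\|^6\le CK^6\|F\|_{op}^4$ via \cref{lem:aw-est}, and conclude $K\sqrt p\,\|4Fv\|\le CK^4\sqrt p\,\|F\|_{op}^3$, which is harmless. The $r=2,3,4$ terms do match the Gaussian case (since $\nabla^rP$ has degree $\le4-r\le2$ in each remaining coordinate for $r\ge2$), so your Hermite-expansion bookkeeping for those is fine.

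One minor slip in your closing paragraph: the super-linear powers of $p$ come from partitions $\mcl J$ with \emph{large} $|\mcl J|$, not small, and it is these that are compensated by the higher powers of $\|F\|_{op}$ in the corresponding injective norms.
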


\begin{proof}[Proof of \cref{thm:qf}]
We first note that $\|F\|_{op}\leq\|F\|_{H.S.}=1/\sqrt 2$. 
We apply \cref{t4} with $r_0=\alpha_1=1$, $\Delta_1=\|F\|_{op}$ and $p_0=2\|F\|_{op}^{-2/3}$. Then it remains to check $\log\|F\|_{op}^{-1}\leq \|F\|_{op}^{-2/3}$. This follows from the fact that $\log x\leq x^{2/3}$ for all $x>0$. 
\end{proof}

\begin{proof}[Proof of \cref{prop:qf}]
We construct an exchangeable pair $(W,W')$ in the same way as in the proof of \cref{prop:homo}. So we obtain the bound \eqref{homo-est}. We derive refined bounds for $H_1$ and $H_2$ using the assumption $q=2$ and the boundedness of $X_i$. 
In the proof, a symmetric function $g:[n]^r\to\mathbb R$ is also regarded as an element of $(\mathbb R^n)^{\odot r}$. In particular, given a partition $\mcl J\in\Pi_r$, we define the mixed injective norm $\|g\|_{\mcl J}$ as in \cref{sec:wiener}. Note that, if two partitions $\mcl J_1,\mcl J_2\in\Pi_r$ are such that any element of $\mcl J_1$ is contained in an element of $\mcl J_2$, then $\|g\|_{\mcl J_1}\leq\|g\|_{\mcl J_2}$ by definition. 
Note also that $\norm{F}_{op}\leq\norm{F}_{H.S.}=1/\sqrt 2<1$. 
Also, we will freely use tensor notations introduced in \cref{sec:proof-p-wass}.
\medskip

\textbf{Step 1. Bounding $H_1$.} We decompose $E$ as
\ben{\label{qf-E-decomp}
E=\sum_{i=1}^n(X_i^2-1)Q_i(X)^2
+\left\{2\sum_{i=1}^nQ_i(X)^2-1\right\}
=:E_1+E_2.
}
Define an $n$-variate polynomial $\wt Q$ as
\[
\wt Q(x_1,\dots,x_n)=\sum_{i=1}^n(x_i^2-1)Q_i(x_1,\dots,x_n)^2
=\sum_{i=1}^n(x_i^2-1)\left(\sum_{i'=1}^nf(i,i')x_{i'}\right)^2.
\]
By \cref{lem:aw-est},
\ben{\label{qf-e1-est0}
\norm{E_1}_p\leq C\sum_{r=1}^4K^r\sum_{\mcl J\in \Pi_r}p^{|\mcl J|/2}\|\E\nabla^r\wt Q(X)\|_{\mcl J}.
}
We bound summands of $\sum_{r=1}^4$ in the following way.
\smallskip

\ul{Case 1: $r=1$}. Since $\E\nabla\wt Q(X)=0$, we have
\[
K\sum_{\mcl J\in \Pi_1}p^{|\mcl J|/2}\lnorm{\E\nabla\wt Q(X)}_{\mcl J}=0.
\]

\ul{Case 2: $r=2$}. For $j,k\in\{1,\dots,n\}$,
\[
\E\partial_{j,k}\wt Q(X)=2\sum_{i=1}^nf(i,j)^21_{\{j=k\}}.
\]
Hence, using $\|f\|=1/\sqrt{2}$ by standardization and $\sqrt{\mcl M(f)}\leq \|F\|_{op}$ (we will use these two facts implicitly in the remainder of the proof),
\ba{
\norm{\E\nabla^2\wt Q(X)}_{\{1,2\}}
=2\sqrt{\sum_{j=1}^n\left(\sum_{i=1}^nf(i,j)^2\right)^2}
\leq2\sqrt{\mcl M(f)}\|f\|\leq\sqrt2\|F\|_{op}
}
and
\ba{
\norm{\E\nabla^2\wt Q(X)}_{\{1\},\{2\}}
=\norm{\E\nabla^2\wt Q(X)}_{op}
=2\mcl M(f)\leq2\|F\|_{op}^2.
}
Therefore,
\[
K^2\sum_{\mcl J\in \Pi_2}p^{|\mcl J|/2}\lnorm{\E\nabla^2\wt Q(X)}_{\mcl J}
\leq CK^2(\sqrt p\|F\|_{op}+p\|F\|_{op}^2).
\]

\ul{Case 3: $r=3$}. Since $\E\nabla^3\wt Q(X)=0$,
\[
K^3\sum_{\mcl J\in \Pi_3}p^{|\mcl J|/2}\lnorm{\E\nabla^3\wt Q(X)}_{\mcl J}=0.
\]

\ul{Case 4: $r=4$}. Define a function $f_1:[n]^4\to\mathbb R$ as $f_1(j,k,l,m)=1_{\{j=k\}}f(j,l)f(j,m)$ for $j,k,l,m\in[n]$. Then, for $j,k,l,m\in[n]$,
\[
\E\partial_{j,k,l,m}\wt Q(X)=4!\wt f_1(j,k,l,m),
\]
where $\wt f_1$ is the symmetrization of $f_1$. 

\begin{enumerate}[label=(\roman*)]

\item Case $|\mcl J|=1$. In this case, we have
\ba{
\lnorm{\E\nabla^4\wt Q(X)}_{\mcl J}
&\leq C\sqrt{\sum_{j,l,m=1}^nf(j,l)^2f(j,m)^2}
\leq C\|F\|_{op}\|F\|_{H.S.}\leq C\|F\|_{op}.
}

\item Case $|\mcl J|=2$. Observe that
\ba{
\lnorm{\E\nabla^4\wt Q(X)}_{\mcl J}
\leq\lnorm{\E\nabla^4\wt Q(X)}_{\{1,2\},\{3,4\}}\vee\lnorm{\E\nabla^4\wt Q(X)}_{\{1,2,3\},\{4\}}.
}
Since $f$ is symmetric, we have
\bm{
\lnorm{\E\nabla^4\wt Q(X)}_{\{1,2\},\{3,4\}}
\leq C\sup_{U,V\in(\mathbb R^n)^{\otimes2}:|U|\vee|V|\leq1}\left|\sum_{j,l,m=1}^nU_{jj}V_{lm}f(j,l)f(j,m)\right|\\
+C\sup_{U,V\in(\mathbb R^n)^{\otimes2}:|U|\vee|V|\leq1}\left|\sum_{j,k,m=1}^nU_{jk}V_{jm}f(j,k)f(j,m)\right|
}
and
\bm{
\lnorm{\E\nabla^4\wt Q(X)}_{\{1,2,3\},\{4\}}
\leq C\sup_{U\in(\mathbb R^n)^{\otimes3},v\in\mathbb R^n:|U|\vee|v|\leq1}\left|\sum_{j,l,m=1}^nU_{jjl}v_{m}f(j,l)f(j,m)\right|\\
+C\sup_{U\in(\mathbb R^n)^{\otimes3},v\in\mathbb R^n:|U|\vee|v|\leq1}\left|\sum_{j,k,l=1}^nU_{jkl}v_{j}f(j,l)f(j,k)\right|.
}
For any $U,V\in(\mathbb R^n)^{\otimes2}$,
\ba{
\left|\sum_{j,l,m=1}^nU_{jj}V_{lm}f(j,l)f(j,m)\right|
&=\left|\sum_{j=1}^nU_{jj}(FVF)_{jj}\right|\\
&\leq\|U\|_{H.S.}\|FVF\|_{H.S.}
\leq\|F\|_{op}^2\|U\|_{H.S.}\|V\|_{H.S.}
}
and
\ba{
\left|\sum_{j,k,m=1}^nU_{jk}V_{jm}f(j,k)f(j,m)\right|
&=\left|\sum_{j=1}^n(UF)_{jj}(VF)_{jj}\right|\\
&\leq\|UF\|_{H.S.}\|VF\|_{H.S.}
\leq\|F\|_{op}^2\|U\|_{H.S.}\|V\|_{H.S.}.
}
Hence
\ben{\label{qf-e-22}
\lnorm{\E\nabla^4\wt Q(X)}_{\{1,2\},\{3,4\}}\leq C\|F\|_{op}^2.
}
In the meantime, for any $U\in(\mathbb R^n)^{\otimes3}$ and $v\in\mathbb R^n$,
\ba{
\left|\sum_{j,l,m=1}^nU_{jjl}v_{m}f(j,l)f(j,m)\right|
&=\left|\sum_{j=1}^n\left(\sum_{l=1}^nU_{jjl}f(j,l)\right)(Fv)_j\right|\\
&\leq\sqrt{\sum_{j=1}^n\left(\sum_{l=1}^nU_{jjl}f(j,l)\right)^2}|Fv|
\leq\|F\|_{op}^2|U||v|
}
and, with $U_j=(U_{jkl})_{1\leq k,l\leq n}$,
\ba{
\left|\sum_{j,k,l=1}^nU_{jkl}v_{j}f(j,l)f(j,k)\right|
&=\left|\sum_{j=1}^n(FU_{j}F)_{jj}v_{j}\right|
\leq\|F\|_{op}^2\sum_{j=1}^n\|U_j\|_{H.S.}|v_j|\\
&\leq\|F\|_{op}^2|U||v|.
}
Hence
\[
\lnorm{\E\nabla^4\wt Q(X)}_{\{1,2,3\},\{4\}}\leq C\|F\|_{op}^2.
\]
Consequently, 
\[
\lnorm{\E\nabla^4\wt Q(X)}_{\mcl J}\leq C\|F\|_{op}^2.
\]

\item Case $|\mcl J|=3$. In this case, we have
\ba{
\lnorm{\E\nabla^4\wt Q(X)}_{\mcl J}
=\lnorm{\E\nabla^4\wt Q(X)}_{\{1,2\},\{3\},\{4\}}.
}
Hence, by \eqref{qf-e-22},
\[
\lnorm{\E\nabla^4\wt Q(X)}_{\mcl J}\leq C\|F\|_{op}^2.
\]

\item Case $|\mcl J|=4$. In this case we have $\mcl J=\{\{1\},\{2\},\{3\},\{4\}\}$. Therefore, by \eqref{qf-e-22},
\[
\lnorm{\E\nabla^4\wt Q(X)}_{\mcl J}\leq C\|F\|_{op}^2.
\]

\end{enumerate}
All together, we obtain
\ba{
K^4\sum_{\mcl J\in \Pi_4}p^{|\mcl J|/2}\lnorm{\E\nabla^4\wt Q(X)}_{\mcl J}
\leq CK^4(\sqrt p\norm{F}_{op}+p^2\|F\|_{op}^2).
}

Combining these bounds with \eqref{qf-e1-est0} gives
\ben{\label{qf-E1-est}
\norm{E_1}_p\leq CK^4(\sqrt p\norm{F}_{op}+p^2\|F\|_{op}^2).
}

In the meantime, by a similar argument to the proof of \cref{prop:homo} (cf.~\eq{homo-E-gauss} and the bound on $\|H_{12}\|_p$ therein),
\[
\norm{E_2}_p\leq CK^{2}\max_{\mcl J\in\Pi_{2}}p^{|\mcl J|/2}\|f\wt\otimes_1f\|_{\mcl J}.
\]
Observe that
\ba{
\left(f\wt\otimes_1f(j,k)\right)_{1\leq j,k\leq n}=\left(\sum_{i=1}^nf(i,j)f(i,k)\right)_{1\leq j,k\leq n}=F^2.
}
Hence we have
\ba{
\|f\wt\otimes_1f\|_{\{1,2\}}=\|F^2\|_{H.S.}\leq\|F\|_{op}\|F\|_{H.S.}=\|F\|_{op}/\sqrt 2
}
and
\ba{
\|f\wt\otimes_1f\|_{\{1\},\{2\}}=\|F\|_{op}^2.
}
Consequently,
\ben{\label{qf-E2-est}
\|E_2\|_p\leq CK^{4}\max\{\sqrt p\|F\|_{op},p\|F\|_{op}^2\}
\leq CK^{4}\sqrt p\|F\|_{op}.
}
Combining \eqref{qf-E-decomp}, \eqref{qf-E1-est} and \eqref{qf-E2-est} gives
\ben{\label{qf-h1-est}
H_1\leq C\sqrt p(\norm{E_1}_p+\norm{E_2}_p)
\leq CK^4\left(p\|F\|_{op}+p^{5/2}\|F\|_{op}^2\right)
\leq CpK^4\|F\|_{op},
}
where the last inequality follows by the condition $p\|F\|_{op}^{2/3}\leq2$.
\medskip

\textbf{Step 2. Bounding $H_2$.} Since $|X'_i-X_i|\leq 2K$ a.s.,
\ben{\label{qf-d4-bound}
\frac{n}{2}\E[D^4\mid X]
\leq 8 (2K)^4\sum_{i=1}^nQ_i(X)^4.
}
By \cref{lem:aw-est},
\ben{\label{qi-mean}
\sum_{i=1}^n\E Q_i(X)^4\leq CK^4\sum_{i=1}^n\left(\sum_{j=1}^nf(i,j)^2\right)^2
\leq CK^4\mcl M(f)\|f\|^2\leq CK^4\|F\|_{op}^2
}
and
\ben{\label{qi-var}
\lnorm{\sum_{i=1}^nQ_i(X)^4-\sum_{i=1}^n\E Q_i(X)^4}_{p}
\leq C\sum_{r=1}^4K^r\sum_{\mcl J\in \Pi_r}p^{|\mcl J|/2}\lnorm{\sum_{i=1}^n\E\nabla^r Q_i^4(X)}_{\mcl J}.
}
We bound summands of $\sum_{r=1}^4$ in the following way.

\ul{Case 1: $r=1$}. For $j\in\{1,\dots,n\}$,
\[
\E\partial_{j} Q_i^4(X)=4f(i,j)\E\left(\sum_{i'=1}^nf(i,i')X_{i'}\right)^3.
\]
Therefore, with
\[
v:=\left(\E\left(\sum_{i'=1}^nf(1,i')X_{i'}\right)^3,\dots,\E\left(\sum_{i'=1}^nf(n,i')X_{i'}\right)^3\right)^{T},
\]
we have
\[
\lnorm{\sum_{i=1}^n\E\nabla Q_i^{4}(X)}_{\{1\}}=4|Fv|\leq4\|F\|_{op}|v|.
\]
By \cref{lem:aw-est},
\ba{
|v|^2=\sum_{i=1}^n\left|\E\left(\sum_{i'=1}^nf(i,i')X_{i'}\right)^3\right|^2
\leq CK^6\sum_{i=1}^n\left(\sum_{i'=1}^nf(i,i')^2\right)^3
\leq CK^6\|F\|_{op}^4.
}
Hence
\[
K\sum_{\mcl J\in \Pi_1}p^{|\mcl J|/2}\lnorm{\sum_{i=1}^n\E\nabla Q_i(X)}_{\mcl J}\leq CK^4\sqrt p\|F\|_{op}^3.
\]

\ul{Case 2: $r=2$}. For $j,k\in\{1,\dots,n\}$,
\[
\E\partial_{jk} Q_i^4(X)=12f(i,j)f(i,k)\sum_{i'=1}^nf(i,i')^2.
\]
Hence
\ba{
\sum_{i=1}^n\E\nabla^r Q_i^4(X)
=12F\diag\left(\sum_{i'=1}^nf(1,i')^2,\dots,\sum_{i'=1}^nf(n,i')^2\right)F.
}
Therefore,
\ba{
\lnorm{\sum_{i=1}^n\E\nabla^r Q_i^4(X)}_{\{1\},\{2\}}
=\lnorm{\sum_{i=1}^n\E\nabla^r Q_i^4(X)}_{op}
\leq12\|F\|_{op}^2\mcl M(f)\leq12\|F\|_{op}^4
}
and
\ba{
\lnorm{\sum_{i=1}^n\E\nabla^r Q_i^4(X)}_{\{1,2\}}
&=\lnorm{\sum_{i=1}^n\E\nabla^r Q_i^4(X)}_{H.S.}
\leq12\|F\|_{op}^2\sqrt{\sum_{i=1}^n\left(\sum_{i'=1}^nf(i,i')^2\right)^2}\\
&\leq6\sqrt 2\|F\|_{op}^3.
}
Hence
\[
K^2\sum_{\mcl J\in \Pi_2}p^{|\mcl J|/2}\lnorm{\sum_{i=1}^n\E\nabla^2 Q_i^4(X)}_{\mcl J}
\leq CK^2(\sqrt p\|F\|_{op}^3+p\|F\|_{op}^4).
\]

\ul{Case 3: $r=3$}. Since $\E\partial_{jkl} Q_i^4(X)=0$ for all $j,k,l\in\{1,\dots,n\}$, 
\[
K^3\sum_{\mcl J\in \Pi_3}p^{|\mcl J|/2}\lnorm{\sum_{i=1}^n\E\nabla^3 Q_i^4(X)}_{\mcl J}=0.
\]

\ul{Case 4: $r=4$}. For $j,k,l,m\in\{1,\dots,n\}$,
\[
\E\partial_{jklm} Q_i^4(X)=24f(i,j)f(i,k)f(i,l)f(i,m).
\]

\begin{enumerate}[label=(\roman*)]

\item Case $|\mcl J|=1$. In this case, we have
\ba{
\lnorm{\sum_{i=1}^n\E\nabla^4 Q_i^4(X)}_{\mcl J}
&=24\sqrt{\sum_{j,k,l,m=1}^n\left(\sum_{i=1}^nf(i,j)f(i,k)f(i,l)f(i,m)\right)^2}\\
&=24\sqrt{\sum_{i,i'=1}^n|(F^2)_{ii'}|^4}
\leq24\|F^2\|_{op}\|F^2\|_{H.S.}\\
&\leq24\|F\|_{op}^3\|F\|_{H.S.}
=12\sqrt 2\|F\|_{op}^3.
}

\item Case $|\mcl J|=2$. Observe that
\ba{
\lnorm{\sum_{i=1}^n\E\nabla^4 Q_i^4(X)}_{\mcl J}
\leq\lnorm{\sum_{i=1}^n\E\nabla^4 Q_i^4(X)}_{\{1,2\},\{3,4\}}\vee\lnorm{\sum_{i=1}^n\E\nabla^4 Q_i^4(X)}_{\{1,2,3\},\{4\}}.
}
For any $U,V\in(\mathbb R^n)^{\otimes2}$,
\ba{
\sum_{i=1}^n\langle\E\nabla^4 Q_i^4(X),U\otimes V\rangle
&=24\sum_{i,j,k,l,m=1}^nf(i,j)f(i,k)f(i,l)f(i,m)U_{jk}V_{lm}\\
&=24\sum_{i=1}^n(FUF)_{ii}(FVF)_{ii}
\leq24\|FUF\|_{H.S.}\|FVF\|_{H.S.}\\
&\leq24\|F\|_{op}^4\|U\|_{H.S.}\|V\|_{H.S.}.
}
Hence
\ben{\label{qf-d4-22}
\lnorm{\sum_{i=1}^n\E\nabla^4 Q_i^4(X)}_{\{1,2\},\{3,4\}}\leq24\|F\|_{op}^4.
}
In the meantime, for any $U\in(\mathbb R^n)^{\otimes3}$ and $v\in\mathbb R^n$,
\ba{
\sum_{i=1}^n\langle\E\nabla^4 Q_i^4(X),U\otimes v\rangle
&=24\sum_{i,j,k,l,m=1}^nf(i,j)f(i,k)f(i,l)f(i,m)U_{jkl}v_{m}\\
&=24\sum_{i,j=1}^nf(i,j)(FU_{j}F)_{ii}(Fv)_i,
}
where $U_j=(U_{jkl})_{1\leq k,l\leq n}$. Thus, by the Cauchy--Schwarz inequality,
\ba{
\sum_{i=1}^n\langle\E\nabla^4 Q_i^4(X),U\otimes v\rangle
&\leq24\sqrt{\sum_{i,j=1}^nf(i,j)^2|(Fv)_i|^2\sum_{i,j=1}^n(FU_{j}F)_{ii}^2}\\
&\leq24\|F\|_{op}^2\sqrt{\sum_{i=1}^n|(Fv)_i|^2\sum_{j=1}^n\|U_{j}\|_{H.S.}^2}\\
&\leq24\|F\|_{op}^3|v||U|.
}
Hence
\[
\lnorm{\sum_{i=1}^n\E\nabla^4 Q_i^4(X)}_{\{1,2,3\},\{4\}}\leq24\|F\|_{op}^3.
\]
Consequently, 
\[
\lnorm{\sum_{i=1}^n\E\nabla^4 Q_i^4(X)}_{\mcl J}\leq24\|F\|_{op}^3.
\]

\item Case $|\mcl J|=3$. In this case, we have
\ba{
\lnorm{\sum_{i=1}^n\E\nabla^4 Q_i^4(X)}_{\mcl J}
=\lnorm{\sum_{i=1}^n\E\nabla^4 Q_i^4(X)}_{\{1,2\},\{3\},\{4\}}.
}
Therefore, by \eqref{qf-d4-22},
\[
\lnorm{\sum_{i=1}^n\E\nabla^4 Q_i^4(X)}_{\mcl J}\leq24\|F\|_{op}^4.
\]

\item Case $|\mcl J|=4$. 
In this case we have $\mcl J=\{\{1\},\{2\},\{3\},\{4\}\}$. 
Therefore, by \eqref{qf-d4-22},
\[
\lnorm{\sum_{i=1}^n\E\nabla^4 Q_i^4(X)}_{\mcl J}\leq24\|F\|_{op}^4.
\]

\end{enumerate}
All together, we obtain
\ba{
K^4\sum_{\mcl J\in \Pi_4}p^{|\mcl J|/2}\lnorm{\sum_{i=1}^n\E\nabla^4 Q_i^4(X)}_{\mcl J}
\leq CK^4(p\norm{F}_{op}^3+p^2\|F\|_{op}^4).
}

Combining these bounds with \eqref{qi-var} and the condition $p\leq2\|F\|_{op}^{-1}$, we obtain
\ben{\label{qi-var2}
\lnorm{\sum_{i=1}^nQ_i(X)^4-\sum_{i=1}^n\E Q_i(X)^4}
\leq CK^4\|F\|_{op}^2.
}
By \eqref{qf-d4-bound}, \eqref{qi-mean} and \eqref{qi-var2}, we conclude
\ben{\label{qf-h2-est}
H_2
\leq CK^4p\|F\|_{op}.
}
Combining \eqref{homo-est}, \eqref{qf-h1-est} and \eqref{qf-h2-est}, we complete the proof.
\end{proof}

\subsection{Removing the extra assumptions in derivation of (\ref{ov-est})}\label{appendix:ov-est}

In the literature, the bound \eqref{ov-est} was formally established only when $W$ has a bounded $C^\infty$ density $h$ with respect to $N(0,I_d)$ such that $h\geq\eta$ for some constant $\eta>0$ and $|\nabla h|$ is bounded. 
In this appendix, we show this assumption can be replaced with $\E|W|^p<\infty$. Our argument is largely the same as in Section 8 of \cite{Bo20}. 
Below we assume $W$ and $Z$ are independent without loss of generality.
\medskip

\textbf{Step 1}. In this step, we prove \eqref{ov-est} when $W$ has a compactly supported $C^\infty$ density $f$. 
Let $U$ be a uniform random variable on $[0,1]$ independent of $W$ and $Z$. Also, let $Z'\sim N(0,I_d)$ be independent of everything else. Take $\eta\in(0,1)$ arbitrarily, and define $I^\eta:=1_{\{U\leq\eta\}}$ and $W^\eta:=I^\eta Z'+(1-I^\eta)W$. Then, for any bounded measurable function $g:\mathbb R^d\to\mathbb R$,
\ba{
\E g(W^\eta)=\eta\E g(Z')+(1-\eta)\E g(W)
=\eta\int_{\mathbb R^d}g(x)\phi(x)dx+(1-\eta)\int_{\mathbb R^d}g(x)f(x)dx.
}
Hence $\eta+(1-\eta)f/\phi$ is a density of $W^\eta$ with respect to $N(0,I_d)$. In this case we already have
\ben{\label{ov-step1}
\mcl W_p(W^\eta,Z)\leq\int_0^\infty\|\rho^\eta_t(F^\eta_t)\|_pdt,
}
where $F^\eta_t:=e^{-t}W^\eta+\sqrt{1-e^{-2t}}Z$ and $\rho_t^\eta$ is the score of $F^\eta_t$ with respect to $N(0,I_d)$. By the triangle inequality for the $p$-Wasserstein distance, we have
\ba{
|\mcl W_p(W,Z)-\mcl W_p(W^\eta,Z)|
&\leq\mcl W_p(W,W^\eta)
\leq\norm{W-W^\eta}_p\\
&=(\E I^\eta|W-Z'|^p)^{1/p}
=\eta^{1/p}\norm{W-Z'}_p.
}
Hence $|\mcl W_p(W,Z)-\mcl W_p(W^\eta,Z)|\to0$ as $\eta\downarrow0$.


Meanwhile, by Lemma IV.1 in \cite{NoPeSw14},
\ben{\label{fr-score-eta}
\rho^\eta_t(F^\eta_t)
=\E\left[e^{-t}W^\eta-\frac{e^{-2t}}{\sqrt{1-e^{-2t}}}Z\mid F^\eta_t\right].
}
In particular, \ba{
\norm{\rho^\eta_t(F^\eta_t)}_p
\leq e^{-t}(\norm{Z'}_p+\|W\|_p)+\frac{e^{-2t}}{\sqrt{1-e^{-2t}}}\|Z\|_p.
}
Hence, by the reverse Fatou lemma,
\[
\limsup_{\eta\downarrow0}\int_0^\infty\|\rho^\eta_t(F^\eta_t)\|_pdt
\leq\int_0^\infty\limsup_{\eta\downarrow0}\|\rho^\eta_t(F^\eta_t)\|_pdt.
\]
Therefore, we complete the proof once we show that $\|\rho^\eta_t(F^\eta_t)\|_p\to\|\rho_t(F_t)\|_p$ as $\eta\downarrow0$ for any fixed $t>0$. The latter follows once we verify the following two statements:
\begin{enumerate}[label=(\roman*)]

\item $\rho^\eta_t(F^\eta_t)\to\rho_t(F_t)$ as $\eta\downarrow0$ a.s.

\item $\{|\rho^\eta_t(F^\eta_t)|^p:\eta\in(0,1)\}$ is uniformly integrable. 

\end{enumerate}

\ul{Proof of (i)}. For any bounded measurable function $g:\mathbb R^d\to\mathbb R$,
\ba{
\E g(e^{-t}W^\eta+\sqrt{1-e^{-2t}}Z)&=\eta\E g(e^{-t}Z'+\sqrt{1-e^{-2t}}Z)+(1-\eta)\E g(F_t)\\
&=\eta\int_{\mathbb R^d}g(x)\phi(x)dx+(1-\eta)\int_{\mathbb R^d}g(x)f_t(x)\phi(x)dx,
}
where $f_t$ is the density of $F_t$ with respect to $N(0,I_d)$. Hence $\eta+(1-\eta)f_t$ is the smooth density of $F^\eta_t$ with respect to $N(0,I_d)$, and thus 
\[
\rho^\eta_t(F^\eta_t)=(1-\eta)\nabla f_t(F^\eta_t)/(\eta+(1-\eta)f_t(F^\eta_t)). 
\]
Since $f_t$ is smooth and $F^\eta_t\to F_t$ as $\eta\downarrow0$ a.s., we have $\rho^\eta_t(F^\eta_t)\to\nabla f_t(F_t)/f_t(F_t)=\rho_t(F_t)$ as $\eta\downarrow0$ a.s.

\ul{Proof of (ii)}. 
Let
\[
G_t:=e^{-t}(|W|+|Z'|)+\frac{e^{-2t}}{\sqrt{1-e^{-2t}}}|Z|.
\]
Then we have $|\rho_t^\eta(F_t^\eta)|^p\leq\E[G_t^p\mid F_t^\eta]$ for any $\eta\in(0,1)$ by \eqref{fr-score-eta} and Jensen's inequality. Hence, for any $K>0$,
\[
\E[|\rho^\eta_t(F^\eta_t)|^p;|\rho_t(F^\eta_t)|^p>K]\leq\E[\E[G_t^p\mid F_t^\eta];\E[G_t^p\mid F_t^\eta]>K].
\]
Since $\E G_t^p<\infty$, $\{\E[G_t^p\mid F_t^\eta]:\eta\in(0,1)\}$ is uniformly integrable by Theorem 13.4 in \cite{Wi91}. Hence $\{|\rho^\eta_t(F^\eta_t)|^p:\eta\in(0,1)\}$ is uniformly integrable as well.
\medskip

\textbf{Step 2}. In this step, we prove \eqref{ov-est} when $W$ is bounded. Let $N$ be a random variable independent of $W$ and $Z$ and such that $N$ has a $C^\infty$ density $\psi$ and takes values in the unit ball in $\mathbb R^d$. Take $\eps>0$ arbitrarily and define $W^\eps:=W+\eps N$. Then, for any bounded measurable function $g:\mathbb R^d\to\mathbb R$,
\ba{
\E g(W^\eps)=\int_{\mathbb R^d}\E[g(W+\eps x)]\psi(x)dx
=\eps^{-d}\int_{\mathbb R^d}g(y)\E[\psi((y-W)/\eps)]dy.
}
Hence $f(y)=\eps^{-d}\E[\psi((y-W)/\eps)]$ is a density of $W^\eps$. Since $\psi$ is $C^\infty$ and compactly supported, $f$ is $C^\infty$. Also, since $W$ is bounded, $f$ is compactly supported. Thus, by Step 1,
\ben{\label{ov-step2}
\mcl W_p(W^\eps,Z)\leq\int_0^\infty\|\rho^\eps_t(F^\eps_t)\|_pdt,
}
where $F^\eps_t:=e^{-t}W^\eps+\sqrt{1-e^{-2t}}Z$ and $\rho_t^\eps$ is the score of $F^\eps_t$ with respect to $N(0,I_d)$. By the triangle inequality for the $p$-Wasserstein distance, we have
\ba{
|\mcl W_p(W,Z)-\mcl W_p(W^\eps,Z)|
\leq\mcl W_p(W,W^\eps)
\leq\norm{W-W^\eps}_p
=\eps\norm{N}_p.
}
Meanwhile, by Lemma IV.1 in \cite{NoPeSw14},
\ba{
\rho^\eps_t(F^\eps_t)
&=\E\left[e^{-t}W^\eps-\frac{e^{-2t}}{\sqrt{1-e^{-2t}}}Z\mid F^\eps_t\right]\\
&=\E\left[\E\left[e^{-t}W^\eps-\frac{e^{-2t}}{\sqrt{1-e^{-2t}}}Z\mid F_t,N\right]\mid F^\eps_t\right]\\
&=\E\left[\E\left[e^{-t}W-\frac{e^{-2t}}{\sqrt{1-e^{-2t}}}Z\mid F_t,N\right]
+\E\left[e^{-t}\eps N\mid F_t,N\right]\mid F^\eps_t\right]\\
&=\E[\rho_t(F_t)\mid F^\eps_t]
+\eps\E\left[e^{-t}N\mid F^\eps_t\right],
}
where we used the independence between $(W,Z)$ and $N$ in the last line.
Hence
\ba{
\int_0^\infty\norm{\rho^\eps_t(F^\eps_t)}_pdt
&\leq\int_0^\infty\norm{\rho_t(F_t)}_pdt
+\eps\norm{N}_p.
}
Consequently, letting $\eps\downarrow0$ in \eqref{ov-step2}, we obtain \eqref{ov-est}. 
\medskip

\textbf{Step 3}. In this step, we prove \eqref{ov-est} when $\E |W|^p<\infty$. Take $R>0$ arbitrarily and define $W^R:=W1_{\{|W|\leq R\}}$. Since $W^R$ is bounded, we have by Step 2
\ben{\label{ov-step3}
\mcl W_p(W^R,Z)\leq\int_0^\infty\|\rho^R_t(F^R_t)\|_pdt,
}
where $F^R_t:=e^{-t}W^R+\sqrt{1-e^{-2t}}Z$ and $\rho^R_t$ is the score of $F^R_t$ with respect to $N(0,I_d)$. By the triangle inequality for the $p$-Wasserstein distance, we have
\ba{
|\mcl W_p(W,Z)-\mcl W_p(W^R,Z)|
\leq\mcl W_p(W,W^R)
\leq\norm{W-W^R}_p
=(\E[|W|^p1_{\{|W>R|\}}])^{1/p}.
}
Since $\E|W|^p<\infty$, we obtain $|\mcl W_p(W,Z)-\mcl W_p(W^R,Z)|\to0$ as $R\to\infty$ by the dominated convergence theorem. 
Meanwhile, by Lemma IV.1 in \cite{NoPeSw14},
\ben{\label{fr-score}
\rho^R_t(F^R_t)
=\E\left[e^{-t}W^R-\frac{e^{-2t}}{\sqrt{1-e^{-2t}}}Z\mid F^R_t\right]
}
and
\ben{\label{f-score}
\rho_t(F_t)
=\E\left[e^{-t}W-\frac{e^{-2t}}{\sqrt{1-e^{-2t}}}Z\mid F_t\right].
}
In particular, 
\ba{
\norm{\rho^R_t(F^R_t)}_p
\leq e^{-t}\|W\|_p+\frac{e^{-2t}}{\sqrt{1-e^{-2t}}}\|Z\|_p.
}
Hence, by the reverse Fatou lemma,
\[
\limsup_{R\to\infty}\int_0^\infty\|\rho^R_t(F^R_t)\|_pdt
\leq\int_0^\infty\limsup_{R\to\infty}\|\rho^R_t(F^R_t)\|_pdt.
\]
Therefore, we complete the proof once we show that $\|\rho^R_t(F^R_t)\|_p\to\|\rho_t(F_t)\|_p$ as $R\to\infty$ for any fixed $t>0$. The latter follows once we verify the following two statements:
\begin{enumerate}[label=(\roman*)]

\item $\rho^R_t(F^R_t)\to \rho_t(F_t)$ as $R\to\infty$ a.s.

\item $\{|\rho^R_t(F^R_t)|^p:R>0\}$ is uniformly integrable. 

\end{enumerate}

\ul{Proof of (i)}. 
For any $u\in\mathbb R^d$, 
\ben{\label{w-cf-est}
|\E[W^R e^{\sqrt{-1}u\cdot F^R_t}]|
=|\E[W^R e^{\sqrt{-1}u\cdot e^{-t}W^R}]\E[e^{\sqrt{-1}u\cdot\sqrt{1-e^{-2t}} Z}]|
\leq \E|W|e^{-(1-e^{-2t})u^2/2}
}
and
\ben{\label{z-cf-est}
|\E[Z e^{\sqrt{-1}u\cdot F^R_t}]|
=|\E[e^{\sqrt{-1}u\cdot e^{-t}W^R}]\E[Ze^{\sqrt{-1}u\cdot\sqrt{1-e^{-2t}} Z}]|
\leq |u|  e^{-(1-e^{-2t})u^2/2}.
}
Hence, we can define a function $g_R:\mathbb R^d\to\mathbb C$ as 
\[
g_R(x)=\frac{1}{f_R(x)(2\pi)^d}\int_{\mathbb R^d}e^{-\sqrt{-1}u\cdot x}\E\left[\left(e^{-t}W^R-\frac{e^{-2t}}{\sqrt{1-e^{-2t}}}Z\right) e^{\sqrt{-1}u\cdot F^R_t}\right]du,~ x\in\mathbb R^d,
\]
where $f_R(x)=(1-e^{-2t})^{-d/2}\E[\phi((x-e^{-t}W^R)/\sqrt{1-e^{-2t}})]$ is the density of $F^R_t$. 
Similarly, we can define a function $g:\mathbb R^d\to\mathbb C$ as 
\[
g(x)=\frac{1}{f(x)(2\pi)^d}\int_{\mathbb R^d}e^{-\sqrt{-1}u\cdot x}\E\left[\left(e^{-t}W-\frac{e^{-2t}}{\sqrt{1-e^{-2t}}}Z\right) e^{\sqrt{-1}u\cdot F_t}\right]du,~ x\in\mathbb R^d,
\]
where $f(x)=(1-e^{-2t})^{-d/2}\E[\phi((x-e^{-t}W)/\sqrt{1-e^{-2t}})]$ is the density of $F_t$. 
By Theorem 2 in \cite{Ye74} and \eqref{fr-score}--\eqref{f-score}, we have $g_R(F^R_t)=\rho^R_t(F^R_t)$ a.s.~and  $g(F_t)=\rho_t(F_t)$ a.s. 
Moreover, by \eqref{w-cf-est}, \eqref{z-cf-est} and the dominated convergence theorem, $g_R(x)\to g(x)$ as $R\to\infty$ for any $x\in\mathbb R^d$. 
Hence $\rho^R_t(F^R_t)\to \rho_t(F_t)$ as $R\to\infty$ a.s.


\ul{Proof of (ii)}. Let
\[
G_t:=e^{-t}|W|+\frac{e^{-2t}}{\sqrt{1-e^{-2t}}}|Z|.
\]
Then we have $|\rho_t^R(F_t^R)|^p\leq\E[G_t^p\mid F_t^R]$ for any $R>0$ by \eqref{fr-score} and Jensen's inequality. Hence, for any $K>0$,
\[
\E[|\rho^R_t(F^R_t)|^p;|\rho_t(F^R_t)|^p>K]\leq\E[\E[G_t^p\mid F_t^R];\E[G_t^p\mid F_t^R]>K].
\]
Since $\E G_t^p<\infty$, $\{\E[G_t^p\mid F_t^R]:R>0\}$ is uniformly integrable by Theorem 13.4 in \cite{Wi91}. Hence $\{|\rho^R_t(F^R_t)|^p:R>0\}$ is uniformly integrable as well.\qed

\section*{Acknowledgements}

Fang X. was partially supported by Hong Kong RGC GRF 14302418, 14305821, a CUHK direct grant and a CUHK start-up grant. Koike Y. was partly supported by JST CREST and JSPS KAKENHI Grant Number JP19K13668.

\end{document}